\title
\author{\firstname{Daniel} \middlename{} \lastname{vargas-Montoya}}
\email{daniel.vargas-montoya@math.univ-toulouse.fr}
\urladdr{}
\thanks{This work was supported by the National Science Centre of Poland (NCN), grant UMO-2020/39/B/ST1/00940.}
\keywords{Frobenius structure, maximal unipotent monodromy, $p$\nobreakdash-integrality, canonical coordinate.}
\subjclass{}
\begin{document}
%Abstracts must be placed before \maketitle
\begin{abstract}
Let $L$ be a differential operator with coefficients in $\mathbb{Q}(z)$ of order $n\geq2$ with maximal unipotent monodromy at zero. In this paper we are interested in determining when the canonical coordinate of $L$ belongs to $\mathbb{Z}_p[[z]]$. For this purpose, motivated by a recent conjecture due to  P. Candelas, X. de la Ossa and D. van Straten~\cite{CD}, we study the situation when $L$ has a strong Frobenius structure $\Phi=(\phi_{i,j})_{1\leq i,j\leq n}\in M_n(\mathbb{Z}_p[[z]])$ such that $\phi_{1,1}(0)=1$. We then give a necessary and sufficient condition for the canonical coordinate of $L$ to belong to $\mathbb{Z}_p[[z]]$ when $L$ has such a strong Frobenius structure.

 %give a sufficient condition when $L$ has a $p$-integral Frobenius structure. This sufficient condition is motivated by a recent conjecture due to....In the case of $L$ has a $p$ give a necessary and sufficient condition for the canonical coordinate of $L$ to belong to $\mathbb{Z}_p[[z]]$. This condition relies on the notion of $p$-integral Frobenius structure.
  %As a consequence of the main result we prove that if $n=4$ and $L$ is an irreducible Picard\nobreakdash-Fuchs operator with maximal unipotent monodromy at zero having a strong B\nobreakdash-incarnation, then there~is an integer $N>0$ such that the canonical coordinate of $L$ belongs to $\mathbb{Z}[1/N][[z]]$. 
\end{abstract}

%% French abstract
%\begin{altabstract}
%Ceci est le r\'esum\'e fran\c cais.
%\end{altabstract}

\maketitle

%\tableofcontents

\section{Introduction}

In this paper we study the $p$-integrality of the \emph{canonical coordinate} of  differential operators with \emph{maximal unipotent monodromy} at zero. We remind the reader that $$L=\delta^{n}+a_{n-1}(z)\delta^{n-1}+\cdots+a_1(z)\delta+a_0(z)\in\mathbb{Q}(z)[\delta]\text{, }\quad\delta=z\frac{d}{dz},$$ has maximal unipotent monodromy at zero (MUM type) if, for all $i\in\{0,\ldots,n-1\}$, $a_i(z)\in\mathbb{Q}(z)\cap z\mathbb{Q}[[z]]$. By Frobenius method, we know that if $L$ is of MUM type and $n\geq2$ then there are unique power series $\mathfrak{f}(z)\in1+z\mathbb{Q}[[z]]$ and $\mathfrak{g}(z)\in z\mathbb{Q}[[z]]$ such that
\[y_0=\mathfrak{f}(z)\quad\text{and }\quad y_1=\mathfrak{f}(z)\log z+\mathfrak{g}(z)\]
 are solutions of $L$. 
 
 The canonical coordinate of $L$ is the power series $$q(z):=\exp(y_1/y_0)=z\exp(\mathfrak{g}(z)/\mathfrak{f}(z))=z\left(1+\sum_{j\geq1}\frac{1}{j!}(\mathfrak{g}(z)/\mathfrak{f}(z))^n\right)\in\mathbb{Q}[[z]].$$ This power series is also often called the $q$-coordinate of $L$. We are interested in determining the $p$-integrality of $y_0(z)$ and $q(z)$, that is, we want to know when $y_0(z)$ and $q(z)$ belong to $\mathbb{Z}_p[[z]]$,  where $\mathbb{Z}_p$ is the ring of $p$\nobreakdash-adic integers. Our main source of motivation to study this problem comes from a recent conjecture formulated by P. Candelas, X. de la Ossa and D. van Straten in \cite[Section 4.1]{CD} about \emph{Calabi-Yau differential operators}. A typical example of a Calabi-Yau differential  operator is $$\mathcal{H}=\delta^4-5z(5\delta+1)(5\delta+2)(5\delta+3)(5\delta+4).$$
This differential operator appears in the work of Candelas \textit{et al}~\cite{Ca91}, where they study a mirror family for quintic threefolds in $\mathbb{P}^4$. The differential operator $\mathcal{H}$ satisfies some \emph{algebraic properties}, MUM being one of them, and also satisfies some \emph{arithmetical properties}, namely, $F_0$ and $\exp(F_1/F_0)$ belong to $\mathbb{Z}[[z]]$, where $F_0$ and $F_1$ are solutions of $\mathcal{H}$ given by
$$F_0=1+\sum_{n\geq1}\frac{(5n)!}{(n!)^5}z^n\quad\text{and }F_1=F_0\log(z)+G(z),$$
with $$G(z)=\sum_{n\geq1}\frac{(5n)!}{(n!)^5}(5H_{5n}-5H_n)z^n\quad\text{and } H_n=\sum_{i=1}^n1/i.$$ It is clear that $F_0\in\mathbb{Z}[[z]]$ and it was proven by Lian and Yau in \cite{LY} that $\exp(F_1/F_0)\in \mathbb{Z}[[z]]$. The second fact is very surprising because $G(z)$ is not integral because it has unbounded denominators. In addition, it seems that for many differential operators $L$ of MUM type,  $y_0$ and $\exp(y_1/y_0)$ are $N$-integral\footnote{A power series $f(z)\in\mathbb{Q}[[z]]$ is said to be $N$-integral if there exists a nonzero $N\in\mathbb{Q}$ such that $f(Nz)\in\mathbb{Z}[[z]]$.}. These operators are usally called Calabi-Yau operators. We refer the reader to \cite[Definition 6.5]{BR} for a precise definition. So, a natural question is to determine when a differential operator is of Calabi-Yau type.  Almkvist \textit{et al}~\cite{CYT} have gathered more than 400 differential operators of order 4 which are good candidates to be Calabi-Yau operators. For many differential operators appearing in this list the authors also give the analytic solution $y_0$  and it turns out that for numerous cases $y_0$ belongs to $\mathbb{Z}[[z]]$. Moreover, from Krathentaler and Rivoal~\cite{KT11} and Delaygue~\cite{D13}, we also know that the canonical coordinate of some differential operators of such a list  belongs to $\mathbb{Z}[[z]]$.  
 
 It is also expected that every differential operator appearing in  \cite{CYT} can be obtained as \emph{Picard-Fuchs operator} associated with  families of \emph{Calabi-Yau threfolds} in a one parameter family. %\footnote{A differential irreducible operator $\mathcal{L}$ of order 4 is a Picard-Fuchs operator associated with a families of Calabi-Yau threfolds if} 
Following \cite[Theorem 22.2.1]{Kedlaya} or \cite[p.111]{andre}, if $L$ is a Picard\nobreakdash-Fuchs operator then $L$ is equipped with a matrix $\Phi_p=(\phi_{i,j})_{1\leq i,j\leq n}$ with coefficients in $E_p$, called the \emph{strong Frobenius structure}.\footnote{ The field $E_p$ is the field of analytic elements. In Section~\ref{sec_def} we give the definition of $E_p$ and we also give the definition of strong Frobenius structure there.} %Consequently, it is also hoped that, for almost every prime number $p$, every differential operator listed in \cite{CYT} to be equipped with a matrix $\Phi_p=(\phi_{i,j})_{1\leq i,j\leq 4}$ with coefficients in $E_p$,\footnote{ The field $E_p$ is the field of analytic elements. We give the definition of $E_p$ in Section~\ref{sec_def}.} called the \emph{strong Frobenius structure}. 
Recently,  P. Candelas, X. de la Ossa and D. van Straten~\cite[Section~4.1]{CD} formulated a conjecture about the strong Frobenius structure for differential operators associated with a family of  Calabi-Yau threfoolds in a one parameter family.
 
 \begin{conj}[\cite{CD}]\label{conj_duco}
 Let $\mathcal{L}\in\mathbb{Q}(z)[\delta]$ be a differential operator associated with a family of Calabi-Yau threfoolds in a one parameter family. Then, for almost every prime number $p$, there exits  a strong Frobenius structure $\Phi_p=(\phi_{i,j})_{1\leq i,j\leq 4}$ for $\mathcal{L}$ such that $\Phi_p\in M_4(\mathbb{Z}_p[[z]])$ and $\phi_{1,1}(0)=1$, $\phi_{1,2}(0)=0=\phi_{1,3}(0)$, and $\phi_{1,4}(0)=p^3\lambda\zeta_p(3)$, where $\zeta_p(3)$ denotes the $p$-adic analog of $\zeta(3)$ and $\lambda$ being a rational number independent of $p$.
 \end{conj}
  As we have already said, it is expected that Conjecture~\ref{conj_duco} to be true for  the differential operators appearing in \cite{CYT} and it is also expected that such differential operators to be Calabi-Yau type. So, an interesting question is to determine if Conjecture~\ref{conj_duco} implies the $N$\nobreakdash-integrality of $y_0(z)$ and $q(z)$. The initial motivation of this paper is to answer a weaker question, namely, if Conjecture~\ref{conj_duco} implies the $p$-integrality of $y_0(z)$ and $q(z)$. On the one hand,  we show that the existence of a strong Frobenius structure $\Phi_p\in M_4(\mathbb{Z}_p[[z]])$ implies $y_0(z)\in1+z\mathbb{Z}_p[[z]]$. On the other hand, under the assumption $\phi_{1,1}(0)=1$, we give a necessary and sufficient condition for $q(z)\in\mathbb{Z}_p[[z]]$. This condition relies on certain properties of a \emph{$p$-integral Frobenius structure} for a differential operator of order $2$.
         
 We would like to point out that,  thanks to Proposition~\ref{prop_frob_p_integral}, if $\mathcal{L}\in\mathbb{Q}(z)[\delta]$ is an irreducible MUM Picard-Fuchs operator of order $4$ then, for almost every prime number $p$, there exists a strong Frobenius structure $\Phi_p=(\phi_{i,j}(z))_{1\leq i,j\leq4}\in M_4(\mathbb{Z}_p[[z]])$ for $\mathcal{L}$ such that $||\Phi_p||=1$.  Since the strong Frobenius structure is unique up to constant (see \cite{Dwork89}), Conjecture~\ref{conj_duco} says that we should have  $\phi_{1,1}(0)=1$. Furthermore, following  Dwork~\cite[Lemma~6.2, (6.16)]{D69}, we know that, for almost every prime number $p$, there is a strong Frobenius structure $\Gamma_p=(\gamma_{i,j}(z))_{1\leq i,j\leq 4}\in M_4(E_p)$ for $\mathcal{L}$ such that $\gamma_{1,1}(0)^2=1$. Since the strong Frobenius structure is unique up to constant, Conjecture~\ref{conj_duco} claims that we should have $\Phi_p=\frac{1}{\gamma_{1,1}(0)}\Gamma_p$.
      
%In the statement of our main result, Theorem~\ref{theo_integral_mirror}, we only assume $\Phi_p\in M_4(\mathbb{Z}_p[[z]])$ and $\phi_{1,1}(0)=1$. It is important to point out  that somme comments on Conjecture~\ref{conj_duco} 
       
% More generally, a Calabi-Yau operator of order $n+1$ is expected to be a \emph{Picard-Fuchs operator} associated with a family of smooth \emph{$n$-dimensional Calabi-Yau varieties}. Further, according to Christol~\cite{C86-87, C87-88},  if $L$ is a  MUM Picard--Fuchs operator then $y_0$ is a \emph{diagonal of a rational function} and thus $N$-integral. Our main result, Theorem~\ref{theo_integral_mirror}, gives a sufficient condition for the canonical coordinate of $L$ to belong to $\mathbb{Z}[1/N][[z]]$ for some integer $N>0$. This condition relies on the notion of \emph{$p$\nobreakdash-integral Frobenius structure}.

\subsection{Main result}

Let us recall that for a differential operator $L=\delta^{n}+a_{n-1}(z)\delta^{n-1}+\cdots+a_1(z)\delta+a_0(z)$ in $\mathbb{Q}(z)[\delta]$ the companion matrix of $L$ is the matrix 	\[A(z)=
\begin{pmatrix}
0 & 1 & 0 & \dots & 0 & 0\\
0 & 0 & 1 & \dots & 0 & 0\\
\vdots & \vdots & \vdots & \vdots & \vdots & \vdots \\
0 & 0 & 0 & \ldots & 0 & 1\\
-a_{0}(z) & -a_{1}(z) & -a_{2}(z) & \ldots & -a_{n-2}(z) & -a_{n-1}(z)\\
\end{pmatrix}.
\]

\begin{defi}(p-integral Frobenius structure)
Let $L$ be a monic differential operator of order $n$ in $\mathbb{Q}(z)[\delta]$ and let $A(z)$ be the companion matrix of $L$.  A $p$\nobreakdash-integral Frobenius structure for $L$ is a matrix $\Phi\in M_n(\mathbb{Z}_p[[z]])$ such that $\det\Phi\neq0$ and
 \begin{equation*}
 \delta\Phi=A(z)\Phi-p\Phi A(z^{p}).
 \end{equation*}
 %We also say that $L$ is equipped with a $p$\nobreakdash-integral Frobenius structure if there is a matrix $\Phi\in M_n(\mathbb{Z}_p[[z]])$ satisfying equation \eqref{eq_def_p_integral_frob} and $\det\Phi\neq0$.
\end{defi}
 
 This definition is inspired by the definition of \emph{strong Frobenius structure} introduced by Dwork in \cite{DworksFf}. The reader can find this definition in Section~\ref{sec_def}. The relation between both definition is given by Proposition~\ref{prop_frob_p_integral}, where it is shown that if $L\in\mathbb{Q}(z)[\delta]$ is MUM type, irreducible and equipped with a Frobenius structure for almost every primer number $p$ then $L$ has a $p$\nobreakdash-integral Frobenius structure for almost every prime number $p$. 
 
In order to state our main result, we need to introduce some notations. For every MUM type differential operator $L\in\mathbb{Q}(z)[\delta]$ of order $n\geq2$ there is associated a MUM type differential operator $L^{(2)}\in\mathbb{Q}_p[[z]][\delta]$ of order $2$ which is uniquely determined by the fact that $y_0=\mathfrak{f}(z)$ and $y_1=\mathfrak{f}(z)\log z+\mathfrak{g}(z)$ are its solutions at zero. In other words, $L^{(2)}=(\delta-t_2)(\delta-t_1)$, where 
\[
t_1=\frac{\delta\mathfrak{f}(z)}{\mathfrak{f}(z)},\quad t_2=\frac{\delta\mathfrak{h}(z)}{\mathfrak{h}(z)}\quad\text{ with }\mathfrak{h}(z)=\mathfrak{f}(z)+\delta\mathfrak{g}(z)-t_1\mathfrak{g}(z).
\]
Finally, we consider the $\mathbb{Q}_p$-linear operator $\Lambda_p:\mathbb{Q}_p[[z]]\rightarrow\mathbb{Q}_p[[z]]$ given by $\Lambda_p(\sum_{n\geq0}a(n)z^n)=\sum_{n\geq0}a(np)z^n.$ This $\mathbb{Q}_p$-linear operator is often called Cartier operator.

We are now ready to state our main result. 
 
\begin{theo}\label{theo_integral_mirror}
Let $L$ be a differential operator  with coefficients in $\mathbb{Q}(z)$ of order $n\geq2$ and of MUM type. Suppose that $L\in\mathbb{Z}_p[[z]][\delta]$ and that $\Phi=(\phi_{i,j}(z))_{1\leq i,j\leq n}$ is a $p$\nobreakdash-integral Frobenius structure for $L$. Then $y_0(z)\in 1+z\mathbb{Z}_p[[z]]$. Moreover, if $|\phi_{1,1}(0)|=1$ then

 \begin{enumerate}
\item the differential operator $L^{(2)}$ belongs to $\mathbb{Z}_p[[z]][\delta]$ and has a $p$-integral Frobenius structure $\Psi=(\psi_{i,j}(z))_{1\leq i,j\leq 2}$ such that $\Psi(0)=\mathrm{diag}(1,p),$
%\item $\exp(y_1/y_0)$ belongs to $\mathbb{Z}_p[[z]]$ if and only if $\psi_{1,2}(z)\in p\mathbb{Z}_p[[z]]$,
\item the following statements are equivalent:
\begin{enumerate}
\item  $\exp(y_1/y_0)\in\mathbb{Z}_p[[z]]$,\smallskip

\item $y_0(z)=\psi_{1,1}(z)y_0(z^p)\bmod p$,\smallskip

\item $\Lambda_p(\psi_{1,1}(z))y_0(z)=\Lambda_p(y_0(z))\bmod p.$
\end{enumerate}
\end{enumerate}
%Additionally, let us assume that $\Lambda_p(\psi_{1,1}(z))=1\bmod p$. Then,
%\begin{enumerate}[label=\arabic*),resume]
%\item  $\exp(y_1/y_0)$ belongs to $\mathbb{Z}_p[[z]]$ if and only if $\Lambda_p(y_0(z))=y_0(z)\bmod p$.
%\end{enumerate}
\end{theo}

Let us make a few comments. %In this comments we assume that $L$ is a differential operator  with coefficients in $\mathbb{Q}(z)$ of order $n\geq2$ and of MUM type.

\textbullet\quad  The condition $L\in\mathbb{Z}_p[[z]][\delta]$ is satisfied for almost every prime number $p$ because $L$ has its coefficients in $\mathbb{Q}(z)\cap\mathbb{Q}[[z]]$.  Moreover, as a consequence of Theorem~\ref{theo_integral_mirror}, we prove in Corollary~\ref{coro_picard_fuchs} that if $L$ is an irreducible Picard-Fuchs equation then $y_0(z)$ belongs to $1+z\mathbb{Z}_p[[z]]$ for almost every prime number $p$.

%\textbullet\quad  Let us write $\mathfrak{g}(z)=\sum_{i\geq1}b_iz^i\in\mathbb{Q}[[z]]$. In general, $\mathfrak{g}(z)$ has unbounded denominators, that is $\lim_{i\rightarrow\infty}v_p(b_i)=-\infty$, where $v_p$ is the $p$-adic valuation. We show that, for all integers $m\geq0$, $p^m\mathfrak{g}_{p^{m+1}}(z)\in\mathbb{Z}_{p}[z].$

\textbullet\quad An explicit expression for $\Psi$ is given in the proof of Theorem~\ref{theo_integral_mirror}. This expression depends on the power series $\mathfrak{f}(z)$ and $\mathfrak{g}(z)$.

\textbullet\quad According to Theorem~\ref{prop_arith}, the condition $|\phi_{1,1}(0)|=1$ implies $\exp(y_1/y_0)^{p}\in\mathbb{Z}_p[[z]]$.% and that  $\exp(y_1/y_0)\in\mathbb{Z}_p[[z]]$ if and only if the $p$-adic radius of convergence of $\exp(y_1/y_0)$ is at least $1$. Further, in Proposition~\ref{prop_delta} we prove that $\exp(y_1/y_0)=\sum_{n\geq0}\frac{a_n}{n!}z^n$ with $a_n\in\mathbb{Z}_p$ for all $n\geq0$ ant thus, the $p$-adic radius of convergence of $\exp(y_1/y_0)$ is greater than or equal to $p^{-1/p-1}$. %Moreover, we also prove in Theorem~\ref{prop_arith} that $\exp(y_1/y_0)\in\mathbb{Z}_p[[z]]$ if and only if the $p$-adic radius of convergence of $\exp(y_1/y_0)$ is at least $1$.

%\textbullet\quad A power series $t(z)\in\mathbb{Z}_p[[z]]$ is $p$-Lucas if $t(z)=t_p(z)t(z^p)\bmod p$, where $t_p(z)$ is the $p$-truncation of $t(z)$.\footnote{The $p$-truncation of a power series $\sum_{n\geq0} c_nz^n$ is the polynomial $\sum_{n=0}^{p-1}c_nz^n$.} So, according to (2) of Theorem~\ref{theo_integral_mirror}, if $y_0(z)$ is $p$-Lucas then, with the aim of proving  $\exp(y_1/y_0)\in\mathbb{Z}_p[[z]]$, it is sufficient to show that $\psi_{1,1}(z)\bmod p$ is the $p$-truncation of $y_0(z)$.

\textbullet\quad Thanks to Lucas' Theorem, the analytic solution $y_0(z)$ at zero of many differential operators appearing in \cite{CYT} verifies $\Lambda_p(y_0(z))=y_0(z)\bmod p$ for almost every prime number $p$. We recall that it is hoped that the differential operators appearing in \cite{CYT} to be Calabi\nobreakdash-Yau operators. In particular, it is expected that $\exp(y_1/y_0)\in\mathbb{Z}_p[[z]]$ for almost every prime number $p$. Thus, in view of Theorem~\ref{theo_integral_mirror}, in order to prove  $\exp(y_1/y_0)\in\mathbb{Z}_p[[z]]$, it is sufficient to show that $\Lambda_p(\psi_{1,1}(z))=1\bmod p.$

\textbullet\quad We conjecture that if under the assumptions of Theorem~\ref{theo_integral_mirror} we assume additionally that $\Lambda_p(y_0(z))=y_0(z)\bmod p$ then 
 \[\Psi\bmod p=\begin{pmatrix}
\mathfrak{f}_p(z)\bmod p & 0\\
\delta(\mathfrak{f}_p(z))\bmod p & 0 
\end{pmatrix},
\]
where $\mathfrak{f}_p(z)$ is the $p$-truncation of $\mathfrak{f}(z)$\footnote{The $p$-truncation of a power series $\sum_{n\geq0} c_nz^n$ is the polynomial $\sum_{n=0}^{p-1}c_nz^n$.}. Note that if the conjecture holds then, Theorem~\ref{theo_integral_mirror} implies $\exp(y_1/y_0)\in\mathbb{Z}_p[[z]]$ and that $y_0(z)$ is $p$-Lucas\footnote{A power series $t(z)\in\mathbb{Z}_p[[z]]$ is $p$-Lucas if $t(z)=t_p(z)t(z^p)\bmod p$, where $t_p(z)$ is the $p$-truncation of $t(z)$}.
%Actually, we conjecture that if $\Lambda_p(\mathfrak{f}_p)=\mathfrak{f}_p\bmod p$ then $$\Psi\bmod p=\begin{pmatrix}
%\mathfrak{f}_p & 0\\
%\delta\mathfrak{f}_p & 0
%\end{pmatrix}$$
%In view of this conjecture, if $\mathfrak{f}$ is $p$-Lucas then, according to Theorem~\ref{theo_integral_mirror}, $\exp(y_1/y_0)\in\mathbb{Z}_p[[z]]$. 

\subsection{Strategy of the proof}

Let us briefly explain the strategy of the proof of Theorem~\ref{theo_integral_mirror}. The $p$\nobreakdash-integrality of $y_0(z)$ is a direct consequence of Proposition~\ref{prop_p_integral_frob} and Theorem~\ref{theo_generic_integral}. Proposition~\ref{prop_p_integral_frob} is proven in Section~\ref{sec_proofs} and Theorem~\ref{theo_generic_integral} is proven in Section~\ref{sec_p_integral}.  In Section~\ref{sec_p_diff_equa} we use the theory of $p$\nobreakdash-adic differential equations in order to prove some results that are crucial in the proof of Theorem~\ref{theo_generic_integral}. Items (1) and (2) of Theorem~\ref{theo_integral_mirror} are proven in Section~\ref{sec_proof}. For this purpose, we prove in Section~\ref{sec_proof_mirror_maps} some results on $p$-integrality of some power series with coefficients $\mathbb{Q}_p$.

\subsection{Previous results and comparison}
It has been shown, in numerous cases, that the canonical coordinate belongs to $\mathbb{Z}[[z]]$. For example, from the works of Lian and  Yau~\cite[Sec.20, Theorem 5]{LY}, Zudilin~\cite[Theorem~3]{Z02},  Krathentaler and Rivoal~\cite[Theorem 1]{KT10} and Delaygue~\cite{D13}, we know that the canonical coordinate for a large class of MUM hypergeometric operators belongs to  $\mathbb{Z}[[z]]$. Furthermore, Delaygue,  Rivoal and Roques~\cite{D17} gave a characterization of the hypergeometric operators whose canonical coordinate belongs to $\mathbb{Z}[[z]]$.  The works of Krathentaler and Rivoal \cite{KT11}, and Delaygue~\cite{D13} provide examples of  canonical coordinate in $\mathbb{Z}[[z]]$ for some non\nobreakdash-hypergeometric operators. The technique used by Krathentaler and Rivoal, and Delaygue is based on a multi-variate version of Dwork's formal congruences~\cite[Theorem~1]{DworksFfIV}.  In \cite{Volo} there is an approach using $p$\nobreakdash-adic cohomology to prove the integrality of canonical coordinate of Picard\nobreakdash-Fuchs operators coming from families of Calabi-Yau operators. We do not know the real status of \cite{Volo} since it is not published. In contrast, the present work offers a more elementary approach which is based on the theory of $p$\nobreakdash-adic differential equations. The idea of using $p$\nobreakdash-adic tools for proving integrality of canonical coordinate goes back to Stienstra~\cite{S}. Finally, let us mention a result due to Beukers and Vlasenko~\cite{BV21}. Let $L\in\mathbb{Z}_p[[z]][\delta]$ be a differential operator of order $n$. Then, they show that if there exists $\mathcal{A}=\sum_{i=1}^{n}A_i(z)\delta^{i-1}\in\mathbb{Z}_p[[z]][\delta]$ with $A_1(0)=1$ such that, for every solution $y$ of $L$, the composition $\mathcal{A}(y(z^p))$ is a solution of $L$, then $y_0$ and $\exp(y_1/y_0)\in\mathbb{Z}_p[[z]]$. %It is not hard to see that the existence of $\mathcal{A}$ is equivalent to saying that $L$ is equipped with a $p$\nobreakdash-integral Frobenius structure  $\Phi_p=(\phi_{i,j}(z))_{1\leq i,j\leq n}$ such that $\phi_{1,1}(0)=1$ and $\phi_{i,j}(z)\in p^{j-1}\mathbb{Z}_p[[z]]$ for all $1\leq i,j\leq n$.
Nevertheless, according to Remark~\ref{rem_equivalence}, Theorem~\ref{theo_integral_mirror} implies that  $\exp(y_1/y_0)\in\mathbb{Z}_p[[z]]$ if and only if there exists $\mathcal{B}=B_1(z)+B_2(z)\delta\in\mathbb{Z}_p[[z]][\delta]$ with $B_1(0)=1$ and $B_2(0)=0$ such that, for every solution $y$ of $L^{(2)}$, the composition $\mathcal{B}(y(z^p))$ is a solution of $L^{(2)}$. %In other words, Theorem~\ref{theo_integral_mirror} reduces the study of the differential operator $L$ to the differential operator  $L_2$. %Actually, such a $\mathcal{B}$ exists if and only if $\psi_{1,2}(z)\in p\mathbb{Z}_p[[z]]$.

%In Section~\ref{sec_proofs}, we prove Theorem~\ref{theo_integral_mirror}. Our proof is based on Theorems~\ref{theo_generic_integral} and \ref{theo_generic_mirror}. In Section~\ref{sec_p_diff_equa}, we prove Proposition~\ref{prop_ant} which is the main ingredient in the proof of Theorems~\ref{theo_generic_integral} and ~\ref{theo_generic_mirror}. The proof of this proposition relies on the theory of $p$\nobreakdash-adic differential equations. Section~\ref{sec_p_integral} is devoted to proving Theorem~\ref{theo_generic_integral} and in Section~\ref{sec_proof_mirror_maps}, Theorem~\ref{theo_generic_mirror} will be proven. In the last section we prove Corollary~\ref{theo_b_incarnation}.

\section{ Frobenius structure and MUM operators}\label{sec_def}

 Let $\mathbb{Q}_p$ be the field of $p$\nobreakdash-adic numbers and $\overline{\mathbb{Q}_p}$ be an algebraic closure of $\mathbb{Q}_p$. It is well\nobreakdash-known that the $p$\nobreakdash-adic norm of $\mathbb{Q}_p$ extends uniquely  to $\overline{\mathbb{Q}_p}.$ Let $\mathbb{C}_p$ be the completion of $\overline{\mathbb{Q}_p}$ with respect to the $p$\nobreakdash-adic norm. The field $\mathbb{C}_p(z)$ is equipped with the Gauss norm which is defined as follows $$\left|\frac{\sum_{i=0}^na_iz^i}{\sum_{j=0}^m b_jz^j}\right|_{\mathcal{G}}=\frac{\sup\{|a_i|\}_{1\leq i\leq n}}{\sup\{|b_j|\}_{1\leq j\leq m}}.$$
The field of \emph{analytic elements}, denoted $E_{p}$, is the completion of $\mathbb{C}_p(z)$ with respect to the Gauss norm. The field $E_{p}$ has a derivation $\delta=zd/dz$. It is easily seen that $E_{p}\subset\mathcal{W}_{p}$, where $$\mathcal{W}_{p}=\left\{\sum_{n\in\mathbb{Z}}a_nz^n: a_n\in\mathbb{C}_p, \lim\limits_{n\rightarrow-\infty}|a_n|=0,\text{ and } \sup\limits_{n\in\mathbb{Z}}|a_n|<\infty \right\}.$$
The ring $\mathcal{W}_{p}$ is usually called the \textit{Amice ring}. This ring is also equipped with the Gauss norm $$\left|\sum_{n\in\mathbb{Z}}a_nz^n\right|_{\mathcal{G}}=\sup\{|a_n|\}_{n\in\mathbb{Z}}.$$ We let $\mathcal{W}_{\mathbb{Q}_p}$ denote the elements of $\mathcal{W}_{p}$ with coefficients in $\mathbb{Q}_p$. Likewise, $\mathcal{W}_{\mathbb{Z}_p}$ is the set of elements of $\mathcal{W}_{p}$ with coefficients in $\mathbb{Z}_p$.  Notice that $\mathbb{Z}_p[[z]]\subset\mathcal{W}_{\mathbb{Z}_p}$. The following definition is due to Dwork~\cite{DworksFf}.
\begin{defi}[strong Frobenius structure]
Let $L$ be a monic differential operator of order $n$ in $\mathbb{Q}(z)[\delta]$ and let $A$ be the companion matrix of $L$. We say that $L$ has a Frobenius structure if there exists $\Phi\in GL_n(E_p)$ such that $$\delta\Phi=A\Phi-\Phi pA(z^{p}).$$
\end{defi}

%The equality $\delta\Phi=A\Phi-\Phi pA(z^{p})$ with $\Phi=(\phi_{i,j})_{1\leq i,j\leq n}\in GL_n(E_p)$ is equivalent to having a \emph{Frobenius} action on the space of solutions of $L$. That is, for every solution $f$ of $L$ in a field containing $E_p$, the element $\phi_{1,1}f(z^{p})+\phi_{1,2}(\delta f)(z^{p})+\cdots+\phi_{1,n}(\delta^{n-1}f)(z^{p})$ is also a solution of $L$.

According to \cite[Theorem 22.2.1]{Kedlaya} or \cite[p.111]{andre}, if $L$ is a Picard\nobreakdash-Fuchs operator then $L$ is equipped with a strong Frobenius structure for almost every prime number.

\begin{defi}[MUM differential operator]
 Let $L$ be a monic differential operator in $\mathbb{Q}_p[[z]][\delta]$. We say that $L$ is MUM if its coefficients belong to  $\mathbb{Q}_p[[z]]\cap z\mathbb{Q}_p[[z]]$.
\end{defi}

\begin{rema}\label{rem_sol}\hfill
\begin{enumerate}

\item  Let $A(z)\in M_n(\mathbb{Q}_p[[z]])$ and let $N=A(0)$. If the eigenvalues of $N$ are all zero then, from~\cite[Chap. III, Proposition 8.5]{Dworkgfunciones}, the differential system $\delta X=A(z)X$ has a fundamental matrix of solutions of the shape $Y_Az^{N}$, where $Y_A\in GL_n(\mathbb{Q}_p[[z]])$, $Y_A(0)=I$ and $z^{N}=\sum_{j\geq0}N^j\frac{(\log z)^j}{j!}$. The matrix $Y_Az^{N}$ will be called \emph{the fundamental matrix of solutions} of $\delta X=A(z)X$ and the matrix $Y_A$ will be called the \emph{uniform part} of solutions of the system $\delta X=AX$.

\item Let $L$ be a MUM differential operator of order $n$ with coefficients in $\mathbb{Q}_p[[z]]$, let $A(z)$ be the companion matrix of $L$ and let $N=A(0)$. Since $L$ is MUM, the eigenvalues of $N$ are all zero. We denote by $X_L= Y_Lz^{N}$ the fundamental matrix of solutions of the system $\delta X=A(z)X$, where the matrix $Y_L$ is the uniform part of the system $\delta X=A(z)X.$  Since the eigenvalues of $N$ are all equal to zero, we have $N^n=0$. For this reason,  $z^{N}=\sum_{j=0}^{n-1}N^j\frac{(\log z)^j}{j!}$.  Consequently, there are unique power series $\mathfrak{f}(z)\in1+z\mathbb{Q}_p[[z]]$ and $\mathfrak{g}\in z\mathbb{Q}_p[[z]]$ such that  $\mathfrak{f}(z)$ and $\mathfrak{f}(z)\log z+\mathfrak{g}(z)$ are solutions of $L$. Moreover, since $\log z$ is  transcendental over $\mathbb{Q}_p[[z]]$, it follows that if $\mathfrak{t}(z)\in \mathbb{Q}_p[[z]]$ is a solution of $L$ then $\mathfrak{t}(z)=c\mathfrak{f}(z)$, where $c=\mathfrak{t}(0)$. 
\end{enumerate}
\end{rema}

\begin{rema}\label{rem_det}
The goal of this remark is to show that if $L\in\mathbb{Q}_p(z)[\delta]$ is a MUM differential operator of order $n$ equipped with a $p$\nobreakdash-integral Frobenius structure given by the matrix $\Phi=(\phi_{i,j})_{1\leq i,j\leq n}$ then $\Phi=Y_L\Phi(0)Y_L(z^{p})^{-1}$ and $\Phi(0)$ is an upper triangular matrix with entries diagonal  $\phi_{1,1}(0),$ $p\phi_{1,1}(0),\ldots, p^{n-1}\phi_{1,1}(0)$. In fact, let $Y_Lz^{N}$ be the fundamental matrix of solutions of $\delta X= A(z)X$, where $N=A(0)$ and $A(z)$ the companion matriz of $L$. Then $Y_L(z^{p})z^{pN}$ is the fundamental matrix of solutions of the system $\delta X=pA(z^{p})X$. We know that \[\delta\Phi=A\Phi-p\Phi A(z^{p})\] and thus, there exists $C\in M_n(\mathbb{C}_p)$ such that we have $\Phi=Y_Lz^{N}C(z^{pN})^{-1}Y_L(z^{p})^{-1}$. Since $$(z^{pN})^{-1}=\mathrm{diag}(1,1/p,\ldots,1/p^{n-1})(z^{N})^{-1}\mathrm{diag}(1,p,\ldots, p^{n-1})$$
and $\log z$ is transcendental over $\mathcal{W}_{p}$, and since moreover $\Phi\in M_n(\mathcal{W}_{\mathbb{Z}_p})$, it follows that   $z^{N}C(z^{pN})^{-1}=C$. In addition, from this equality it is not hard to see that $C$ is an upper triangular matrix with entries diagonal $\mu, p\mu,\ldots, p^{n-1}\mu$ for some $\mu\in\mathbb{C}_p$. Consequently, $\Phi=Y_LCY_L(z^{p})^{-1}$ and $C=\Phi(0)$. Therefore, $\Phi(0)$ is an upper triangular matrix with diagonal given by $\phi_{1,1}(0), p\phi_{1,1}(0), \ldots, p^{n-1}\phi_{1,1}(0).$

\end{rema}

\section{$p$-integrality of $y_0$ and radius of convergence}\label{sec_proofs}

In this section we describe our strategy to prove that $y_0\in 1+z\mathbb{Z}_p[[z]]$. This strategy relies on Proposition~\ref{prop_p_integral_frob}  and Theorem~\ref{theo_generic_integral}. In order to state these results, we recall the definition of \emph{radius of convergence of a differential operator with coefficients in $\mathcal{W}_{p}\cap\mathbb{C}_p[[z]]$}. For a real number $r>0$ we have the following ring of analytic functions $$\mathcal{A}(z,r):=\left\{\sum_{j\geq0}a_j(x-z)^j\in\mathcal{W}_{p}[[x-z]]:\text{ for all  } s<r\text{, } \lim\limits_{j\rightarrow\infty}|a_j|_{\mathcal{G}}s^j=0\right\}.$$
In other words, $\mathcal{A}(z,r)$ is the ring of power series with coefficients in $\mathcal{W}_p$ that converge in the open disk $D(z,r):=\{x\in\mathcal{W}_p: |x-z|<r\}$.  

Now, let us consider $\bm{\tau}:\mathcal{W}_{p}\cap\mathbb{C}_p[[z]]\rightarrow\mathcal{A}(z,1)$ given by $$\bm{\tau}(f)=\sum_{j\geq0}\frac{(d/dz)^j(f)}{j!}(x-z)^j.$$

The map $\bm{\tau}$ is well-defined because, for all $f\in\mathcal{W}_{p}\cap\mathbb{C}_p[[z]]$, $\left|\frac{(d/dz)^j(f)}{j!}\right|_{\mathcal{G}}\leq|f|_{\mathcal{G}}$ for all $j\geq0$. It is clear that $\bm{\tau}$ is a homomorphism of rings. 

\begin{rema}\label{rema_unit}\hfill

\begin{enumerate}
\item According to Proposition 1.2 of~\cite{Gillesalgebriques}, a nonzero element $f=\sum_{n\in\mathbb{Z}}a_nz^n\in\mathcal{W}_p$ is a unit of $\mathcal{W}_p$ if and only if there is $n_0\in\mathbb{Z}$ such that $|f|_{\mathcal{G}}=|a_{n_0}|$.  In particular, every nonzero element of $\mathbb{Z}_p[[z]]$ is a unit of $\mathcal{W}_p$.\\

\item Let $f$ be in $\mathcal{W}_{p}\cap\mathbb{C}_p[[z]]$. If $f$ is a unit of $\mathcal{W}_{p}$ then $\bm{\tau}(f)$ is a unit of $\mathcal{A}(z,1)$. Indeed, let us write $$\bm{\tau}(f)=f(1+g),\text{ where } g=\sum_{j\geq1}\frac{(d/dz)^j(f)}{j!f}(x-z)^j.$$
We have $g\in\mathcal{A}(z,1)$ because, for all $j\geq1$, $\left|\frac{(d/dz)^j(f)}{j!}\right|_{\mathcal{G}}\leq|f|_{\mathcal{G}}$ and, by assumption, $1/f\in\mathcal{W}_{p}.$ Further, $1+g$ is a unit element of $\mathcal{A}(z,1)$ given that $$(1+g)\left(\sum_{k\geq0}(-1)^kg^k\right)=1\text{ and }\sum_{k\geq0}(-1)^kg^k\in\mathcal{A}(z,1).$$ Thus, $$\frac{1}{f}\sum_{k\geq0}(-1)^kg^k\in\mathcal{A}(z,1).$$ Finally, it is clear that $$\bm{\tau}(f)\left(\frac{1}{f}\sum_{k\geq0}(-1)^kg^k\right)=1.$$
\end{enumerate}
\end{rema}

For all $f\in\mathcal{W}_{p}\cap\mathbb{C}_p[[z]]$, we have
\begin{equation}\label{eq_com_der}
\bm{\tau}(d/dz f)=d/dx(\bm{\tau} f).
\end{equation}
The ring $(\mathcal{W}_{p}\cap\mathbb{C}_p[[z]])[[x-z]]$ is equipped with the endomorphism $$\bm{F}\left(\sum_{j\geq0}a_j(z)(x-z)^j\right)=\sum_{j\geq0}a_j(z^p)(x^p-z^p)^j.$$
 Since
 \begin{equation*}
 x^p-z^p=\bm{\tau}(z^p)-z^p=\sum_{i=1}^{p}\frac{(d/dz)^i(z^p)}{i!}(x-z)^i,
 \end{equation*}
 we have
 \begin{equation}\label{eq_com_frob}
 \bm{\tau}\circ\bm{F}=\bm{F}\circ\bm{\tau}.
 \end{equation}
 Let $L$ be a monic differential operator of order $n$ with coefficients in $\mathcal{W}_{p}\cap\mathbb{C}_p[[z]]$ and let $A$ be the companion matrix of $L$. Then the differential system $\delta_x X=\bm{\tau}(A)X$ ($\delta_x=xd/dx$) has a unique solution $\mathcal{U}\in GL_n( \mathcal{W}_{p}[[x-z]])$ such that $\mathcal{U}(z)=I$. Moreover, $$\mathcal{U}=\sum_{j\geq0}\frac{A_j}{j!z^j}(x-z)^j,$$
where $A_0=I$ and $A_{j+1}=\delta A_{j}+A_j(A-jI)$ for $j\geq0$. Following \cite[Chap. III, p. 94]{Dworkgfunciones}, the \emph{radius of convergence of $L$} is the radius of convergence of the matrix $\mathcal{U}$ in $\mathcal{W}_{p}$. We let  $\bm{r}(L)$ denote the radius of convergence of $L$. So, $\mathcal{U}\in M_n(\mathcal{A}(z,\bm{r}(L)))$. For a matrix $C=(c_{i,j})_{1\leq i,j\leq n}$ with coefficients in $\mathcal{W}_{p}$, we set $||C||=\max\{|c_{i,j}|_{\mathcal{G}}\}_{1\leq i,j\leq n}$. So, $$\frac{1}{\bm{r}(L)}=\limsup_{j\rightarrow\infty}\left|\left|\frac{A_j}{j!}\right|\right|^{1/j}.$$

 \begin{rema}\label{rema_a_j}
 Let $L$ be a differential operator with coefficients in $\mathbb{Z}_p[[z]]$. If $\bm{r}(L)\geq1$ then $\lim\limits_{j\rightarrow\infty}||A_j||=0$.   Indeed, as $\bm{r}(L)\geq1$ then $\limsup\limits_{j\rightarrow\infty}||A_j/j!||^{1/j}\leq1$. Therefore, $\lim\limits_{j\rightarrow\infty}||A_j||=0$.
 \end{rema}

 \begin{prop}\label{prop_p_integral_frob} 
 Let $L$ be a differential operator with coefficients in $\mathbb{Z}_p[[z]]$ having a $p$\nobreakdash-integral Frobenius structure. Then $\bm{r}(L)\geq1$.
 \end{prop}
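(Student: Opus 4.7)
The plan is to derive a Frobenius functional equation for the fundamental solution $\mathcal{U}$ of $\delta_x X=\bm\tau(A)X$, $\mathcal{U}|_{x=z}=I$, and then bootstrap the $(x-z)$-radius of convergence by iteration.

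\textbf{Step 1: transfer identity.} Applying $\bm\tau$ to the Frobenius relation $\delta\Phi=A\Phi-p\Phi A(z^p)$, and using $\bm\tau(\delta f)=\delta_x(\bm\tau f)$ (which follows from \eqref{eq_com_der} together with $\bm\tau(z)=x$) and $\bm\tau(A(z^p))=\bm F(\bm\tau A)$ (a consequence of \eqref{eq_com_frob}), one obtains
\begin{equation*}
\delta_x\bm\tau(\Phi)=\bm\tau(A)\,\bm\tau(\Phi)-p\,\bm\tau(\Phi)\,\bm F(\bm\tau A).
\end{equation*}
A short computation then shows that both $\mathcal{U}\Phi$ and $\bm\tau(\Phi)\bm F(\mathcal{U})$ satisfy $\delta_x Y=\bm\tau(A)Y$ and reduce to $\Phi(z)$ at $x=z$; for $\bm F(\mathcal{U})$ one uses that $\delta_x(\bm F g)=p\,\bm F(\delta_x g)$. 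Uniqueness of the Cauchy problem yields the transfer identity
\begin{equation*}
\mathcal{U}(x,z)=\bm\tau(\Phi)(x,z)\,\bm F(\mathcal{U})(x,z)\,\Phi(z)^{-1}.
\end{equation*}

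\textbf{Step 2: auxiliary bounds.} Because $A\in M_n(\mathbb{Z}_p[[z]])$, the recursion $A_{j+1}=\delta A_j+A_j(A-jI)$ keeps $\|A_j\|\leq 1$, so $\|A_j/j!\|^{1/j}\leq p^{1/(p-1)}$ and one obtains the a priori bound $\bm r(L)\geq|p|^{1/(p-1)}$. Since $\Phi\in M_n(\mathbb{Z}_p[[z]])\subset M_n(\mathcal{W}_{\mathbb{Z}_p})$, the estimate $\|(d/dz)^j \phi_{i,k}/j!\|_{\mathcal{G}}\leq 1$ gives $\bm\tau(\Phi)\in M_n(\mathcal{A}(z,1))$. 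Finally, $\det\Phi$ is a nonzero element of $\mathbb{Z}_p[[z]]$ and hence a unit of $\mathcal{W}_p$ by Remark~\ref{rema_unit}; so $\Phi^{-1}\in M_n(\mathcal{W}_p)$, and being independent of $x$ it does not interfere with convergence in $(x-z)$.

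\textbf{Step 3: bootstrap.} Iterating the transfer identity $k$ times gives
\begin{equation*}
\mathcal{U}(x,z)=\bm\tau(\Phi)\cdot\bm F(\bm\tau(\Phi))\cdots\bm F^{k-1}(\bm\tau(\Phi))\cdot\bm F^k(\mathcal{U})\cdot\bm F^{k-1}(\Phi^{-1})\cdots\bm F(\Phi^{-1})\cdot\Phi^{-1}.
\end{equation*}
The endomorphism $\bm F$ stabilises $\mathcal{A}(z,1)$, since $|x^p-z^p|\leq\max(|p||x-z|,|x-z|^p)<1$ whenever $|x-z|<1$; hence both outer products lie in $M_n(\mathcal{A}(z,1))$. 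Iterating the same inequality shows that $|x^{p^k}-z^{p^k}|\to 0$ as $k\to\infty$ for any fixed $|x-z|<1$, so for each $t<1$ one may choose $k$ large enough that $|x^{p^k}-z^{p^k}|<|p|^{1/(p-1)}\leq\bm r(L)$ uniformly on $|x-z|\leq t$. The middle factor $\bm F^k(\mathcal{U})=\mathcal{U}(x^{p^k},z^{p^k})$ then converges on the disk $|x-z|<t$, and hence so does the entire right-hand side. This forces $\bm r(L)\geq t$ for every $t<1$, that is, $\bm r(L)\geq 1$. The main technical obstacle I anticipate is verifying that this iterated expression really represents $\mathcal{U}$ on the enlarged disk rather than some other analytic continuation; this is handled by appealing to equality of formal power series in $(x-z)$ on the initial disk $|x-z|<|p|^{1/(p-1)}$, equivalently by uniqueness of the Cauchy problem at $x=z$.
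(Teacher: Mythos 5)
Your proposal is correct and follows essentially the same route as the paper: the same Frobenius transfer identity (the paper phrases it as $\bm{\tau}(\Phi)\bm{F}(\mathcal{U})=\mathcal{U}C$ with $C\in GL_n(\mathcal{W}_p)$, which is your identity with $C=\Phi(z)$), the same a priori bound $\bm{r}(L)\geq |p|^{1/(p-1)}$, the unit property of $\det\Phi$ in $\mathcal{W}_p$, and the same non-Archimedean estimate on $|x^p-z^p|_{\mathcal{G}}$. The only difference is cosmetic: the paper concludes in one step from $\bm{r}(L)\geq\min\{1,\bm{r}(L)^{1/p}\}$, whereas you iterate the identity $k$ times to reach every radius $t<1$; both are valid.
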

  
 \begin{proof}
 Let $A$ be the companion matrix of $L$. We put $A_0=I$ and $A_{j+1}=\delta A_{j}+A_j(A-jI)$ for $j\geq0$. We want to see that $\bm{r}(L)\geq1$. Since $L$ belongs to $\mathbb{Z}_p[[z]][\delta]$, we have $||A||=1$ and thus, for all $j\geq0$, $||A_j||\leq 1$. So, $\bm{r}(L)\geq |p|^{1/p-1}$. We know that $\delta_x(\mathcal{U})=\bm{\tau}(A)\mathcal{U}$ and thus, $$\bm{F}(\mathcal{U})=\sum_{j\geq0}\frac{A_j(z^{p})}{j!z^{jp}}(x^{p}-z^{p})^j$$ is a solution of the system  $\delta_xX=p\bm{F}(\bm{\tau}(A))X$. 
 
 We now prove that  $\bm{F}(\mathcal{U})\in M_n(\mathcal{A}(z,\bm{r}(L)^{1/p}))$. As the Gauss norm is non\nobreakdash-Archimedean, that is equivalent to showing that if $|x_0-z|_{\mathcal{G}}<\bm{r}(L)^{1/p}$ then $\lim\limits_{j\rightarrow\infty}\left|\left|\frac{A_j(z^{p})}{j!z^{jp}}\right|\right||x_0^{p}-z^{p}|_{\mathcal{G}}^j=0$. Indeed, if $|x_0-z|_{\mathcal{G}}<\bm{r}(L)^{1/p}$ then $|x_0-z|^{p}_{\mathcal{G}}<\bm{r}(L)$ but $(x_0-z)^{p}=x_0^{p}-z^{p}+\sum_{i=1}^{p-1}(-1)^i\binom{p}{i}x_0^{p-i}z^{i}$ and $$\left|\sum_{i=1}^{p-1}(-1)^i\binom{p}{i}x_0^{p-i}z^{i}\right|_{\mathcal{G}}\leq1/p$$ because the Gauss norm is non\nobreakdash-Archimedean and, by Lucas' Theorem, $|\binom{p}{i}|\leq 1/p$ for all $1\leq i<p$. Furthermore, $1/p<|p|^{1/p-1}\leq\bm{r}(L)$. Thus, we get $$|x_{0}^{p}-z^{p}|_{\mathcal{G}}\leq\max\{|x_0-z|_{\mathcal{G}}^{p},1/p\}<\bm{r}(L).$$ Given that the radius of convergence of $\mathcal{U}$ is $\bm{r}(L)$, we have $\lim\limits_{j\rightarrow\infty}\left|\left|\frac{A_j}{j!z^j}\right|\right||x_0^{p}-z^{p}|_{\mathcal{G}}^j=0$. As $\left|\left|\frac{A_j}{j!z^j}\right|\right|=\left|\left|\frac{A_j(z^{p})}{j!z^{jp}}\right|\right|$, we obtain $\lim\limits_{j\rightarrow\infty}\left|\left|\frac{A_j(z^{p})}{j!z^{jp}}\right|\right||x_0^{p}-z^{p}|_{\mathcal{G}}^j=0$. Thus $\bm{F}(\mathcal{U})\in M_n(\mathcal{A}(z,\bm{r}(L)^{1/p}))$, where $n$ is the order of $L$. Now, by assumption, there is $\Phi\in M_n(\mathbb{Z}_p[[z]])$ such that $\det(\Phi)\neq0$ and $\delta\Phi=A\Phi-p\Phi A(z^{p})$. Hence, by Equations~\eqref{eq_com_der} and \eqref{eq_com_frob}, we obtain $$\delta_x(\bm{\tau}(\Phi))=\bm{\tau}(A)\bm\tau(\Phi)-p\bm{\tau}(\Phi)\bm{F}(\bm{\tau}(A)).$$
So, $\bm{\tau}(\Phi)\bm{F}(\mathcal{U})$ is a solution of the system $\delta_xX=\bm{\tau}(A)X$. Since $\det(\Phi)\neq0$ and $\det(\Phi)\in\mathbb{Z}_p[[z]]$, it follows from (1) of Remark~\ref{rema_unit} that $\det(\Phi)$ is a unit of $\mathcal{W}_{p}$. So, by (2) of Remark~\ref{rema_unit}, we conclude that $\bm{\tau}(\Phi)\in GL_n(\mathcal{A}(z,1))$. Consequently, $\bm{\tau}(\Phi)\bm{F}(\mathcal{U})\in M_n(\mathcal{A}(z,r))$, where $r=\min\{1,\bm{r}(L)^{1/p}\}$. Given that $\mathcal{U}\in GL_n(\mathcal{W}_{p}[[x-z]])$ is a solution of the system $\delta_X=\bm{\tau}(A)X$, there is $C\in GL_n(\mathcal{W}_{p})$ such that $\bm{\tau}(\Phi)\bm{F}(\mathcal{U})=\mathcal{U}C$. As the radius of convergence of $\mathcal{U}C$ is still $\bm{r}(L)$ then $\bm{r}(L)\geq r$. Consequently, $\bm{r}(L)\geq1$.
 \end{proof}
 
It follows from Remark~\ref{rem_sol} that for a MUM differential operator $L\in\mathbb{Q}_p[[z]][\delta]$ of order $n\geq2$ there are unique power series $\mathfrak{f}(z)\in 1+z\mathbb{Q}_p[[z]]$ and $\mathfrak{g}(z)\in z\mathbb{Q}_p[[z]]$ such that $y_0=\mathfrak{f}$ and $y_1=\mathfrak{f}\log z+\mathfrak{g}$ are solutions of $L$. The main ingredient to prove (1) of Theorem~\ref{theo_integral_mirror} is the following result.

 %Theorem~\ref{theo_integral_mirror} is derived from the following two theorems.
 
 \begin{theo}\label{theo_generic_integral}
 Let $L\in\mathbb{Z}_p[[z]][\delta]$ be a MUM differential operator. If $\bm{r}(L)\geq1$ then $y_0(z)\in 1+z\mathbb{Z}_p[[z]]$.
  \end{theo}

\section{$p$-adic differential equations}\label{sec_p_diff_equa}

The crucial ingredient in the proof of Theorem~\ref{theo_generic_integral} is Proposition~\ref{prop_ant} which is recursively obtained from Lemma~\ref{lemm_paso_base}. The proof of this lemma relies on the theory of $p$\nobreakdash-adic differential equations.

We recall that the \emph{Cartier operator} $\Lambda_p:\mathbb{C}_p[[z]]\rightarrow\mathbb{C}_p[[z]]$ is given by $\Lambda_p(\sum_{i\geq0}a(i)z^i)=\sum_{i\geq0}a(ip)z^i$. From the definition it is clear that, if $f(z)$ and $g(z)$ belong to $\mathbb{C}_p[[z]]$ then $\Lambda_p(f(z)g(z^p))=\Lambda_p(f)g(z)$. Given a matrix $B\in M_n(\mathbb{C}_p[[z]])$, $\Lambda_p(B)$ is the matrix obtained by applying $\Lambda_p$ to each entry of $B$.
 
 \begin{lemm}\label{lemm_paso_base}
 Let $L$ be a MUM differential operator of order $n$ in $\mathbb{Z}_p[[z]][\delta]$, let $A$ be the companion matrix of $L$, and let $Y_L$ be the uniform part of $\delta X=AX$ and let $N=A(0)$. If $\bm{r}(L)\geq1$ then there exists a MUM differential operator $L_1$ of order $n$ in $\mathbb{Z}_p[[z]][\delta]$ such that:
 \begin{enumerate}[label=(\alph*')]
   \item  the fundamental matrix of solutions of $\delta X=B_1X$  is given by \[\mathrm{diag}(1,1/p,\ldots, 1/p^{n-1})\Lambda_p(Y_L)\mathrm{diag}(1,p,\ldots, p^{n-1})z^{N},\] 
 where $B_1$ is the companion matrix of $L_1$,
\item the matrix \[H_1=Y_L(\Lambda_p(Y_L)(z^{p}))^{-1}\mathrm{diag}(1,p,p^2,\ldots, p^{n-1})\] belongs to $M_n(\mathbb{Z}_p[[z]])\cap GL_n(\mathcal{W}_{\mathbb{Q}_p}\cap\mathbb{Q}_p[[z]])$,  $||H_1||=1$, and $$\delta(H_1)=AH_1-p{H}_1B_1(z^{p}),$$
\item $\bm{r}(L_1)\geq1$.
 \end{enumerate}
 \end{lemm}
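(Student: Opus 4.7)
Proof plan.

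Set $D := diag(1,p,\ldots,p^{n-1})$. My plan is to first construct $M_1$, $B_1$, $L_1$, $H_1$ over $\mathbb{Q}_p[[z]]$ and verify the algebraic identities of (a') and the Frobenius relation of (b'); then, using the hypothesis $\bm{r}(L)\geq1$, I would upgrade to $\mathbb{Z}_p$-integrality and deduce $\bm{r}(L_1)\geq1$.

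\emph{Construction and companion form.} Define $M_1 := D^{-1}\Lambda_p(Y_L)D$; since $\Lambda_p(Y_L)(0) = Y_L(0) = I$, $M_1$ lies in $GL_n(\mathbb{Q}_p[[z]])$ with $M_1(0)=I$. Set $B_1 := (\delta M_1)M_1^{-1} + M_1 N M_1^{-1}$, so that $M_1 z^N$ solves $\delta X = B_1 X$ with $B_1(0) = N$. The fact that $A$ is a companion matrix is equivalent to the column recurrences
\[
(Y_L)_{i+1,1} = \delta(Y_L)_{i,1}, \qquad (Y_L)_{i+1,k} = (Y_L)_{i,k-1} + \delta(Y_L)_{i,k} \quad (k \geq 2),
\]
obtained by expanding $(Y_L z^N)_{i+1,j} = \delta(Y_L z^N)_{i,j}$. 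Applying the key identity $\Lambda_p(\delta f) = p\,\delta\Lambda_p(f)$, the diagonal conjugation by $D$ in the definition of $M_1$ absorbs the extra factor $p$, so the same recurrences hold for $M_1$. This forces $B_1$ to be a companion matrix; defining $L_1$ as the corresponding monic operator, $L_1$ is MUM because $B_1(0) = N$. This gives~(a').

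\emph{Frobenius relation.} A direct computation using $M_1(z^p) = D^{-1}\Lambda_p(Y_L)(z^p)D$ gives $H_1 M_1(z^p) = Y_L D$. Combined with $D z^{pN} = z^N D$ (which follows from $D(pN)^j = N^j D$, itself immediate from the super-diagonal form of $N$), I obtain $H_1\, M_1(z^p)\, z^{pN} = Y_L z^N\, D$. The right-hand side solves $\delta X = AX$; applying $\delta$ to the left side via the product rule together with $\delta(M_1(z^p) z^{pN}) = p B_1(z^p) M_1(z^p) z^{pN}$, and cancelling the invertible factor $M_1(z^p) z^{pN}$, yields $\delta H_1 = A H_1 - p H_1 B_1(z^p)$.

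\emph{Integrality and radius.} The main obstacle will be upgrading the constructions to $\mathbb{Z}_p$-integrality. By Remark~\ref{rema_a_j}, the hypothesis $\bm{r}(L)\geq1$ gives the decay $||A_j||\to 0$ in Gauss norm, which controls the Taylor matrix $\mathcal{U}(x,z) = Y_L(x)(x/z)^N Y_L(z)^{-1}$. A Dwork-type $p$-adic analysis of the Cartier-style decomposition $Y_L = H_1 D^{-1} \Lambda_p(Y_L)(z^p)$ should yield $H_1 \in M_n(\mathbb{Z}_p[[z]])$. Since $H_1(0) = D$ has largest entry of absolute value $1$, it follows that $||H_1|| = 1$. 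By Remark~\ref{rema_unit}(1), $\det H_1 \in \mathbb{Z}_p[[z]]\setminus\{0\}$ is a unit of $\mathcal{W}_p$, so $H_1 \in GL_n(\mathcal{W}_{\mathbb{Q}_p} \cap \mathbb{Q}_p[[z]])$. The Frobenius relation $p H_1 B_1(z^p) = A H_1 - \delta H_1$, together with the integrality of $H_1$ and the companion form of $B_1$, then forces $B_1 \in M_n(\mathbb{Z}_p[[z]])$, whence $L_1 \in \mathbb{Z}_p[[z]][\delta]$, completing (b'). Finally, (c') would be established by an argument parallel to the proof of Proposition~\ref{prop_p_integral_frob}: the Frobenius intertwiner $H_1$ together with $\bm{r}(L)\geq1$ allow transferring the radius bound from $L$ to $L_1$.
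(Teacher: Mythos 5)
Your construction of $M_1=D^{-1}\Lambda_p(Y_L)D$ (with $D=diag(1,p,\ldots,p^{n-1})$), the verification via the row recurrence $(M_1)_{i+1,k}=\delta(M_1)_{i,k}+(M_1)_{i,k-1}$ coming from $\Lambda_p\circ\delta=p\,\delta\circ\Lambda_p$, the identity $Dz^{pN}=z^ND$, and the derivation of $\delta H_1=AH_1-pH_1B_1(z^p)$ from $H_1M_1(z^p)z^{pN}=Y_Lz^ND$ are all correct and amount to the same computations the paper carries out in (a') and (b'). The problem is that the two genuinely hard points of the lemma are left unproved. First, the integrality $H_1\in M_n(\mathbb{Z}_p[[z]])$, equivalently $H_0:=\Lambda_p(Y_L)(z^p)Y_L^{-1}\in GL_n(\mathbb{Z}_p[[z]])$, is exactly the analytic core of the statement, and "a Dwork-type $p$-adic analysis \ldots should yield" is not an argument. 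The paper proves this in Lemma~\ref{lemm_paso_0} by realizing $H_0$ as the convergent series $\frac{1}{p}\sum_{\xi^p=1}\sum_{j\geq0}A_j(z)\frac{(\xi-1)^j}{j!}$, whose integrality uses $\bigl|\sum_{\xi^p=1}(\xi-1)^j/j!\bigr|\leq1/p$ and whose convergence uses $\|A_j\|\to0$, i.e.\ the hypothesis $\bm{r}(L)\geq1$ via Remark~\ref{rema_a_j}, together with Christol's identity $H_0Y_L=\Lambda_p(Y_L)(z^p)$. Without this (or an equivalent) step, nothing in (b') beyond the formal intertwining relation is established.

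Second, your route to $B_1\in M_n(\mathbb{Z}_p[[z]])$ does not work as stated. Solving the Frobenius relation gives $B_1(z^p)=\frac{1}{p}H_1^{-1}(AH_1-\delta H_1)$, and $H_1^{-1}=D^{-1}H_0$ carries denominators $1,1/p,\ldots,1/p^{n-1}$; so integrality of $H_1$ and the companion shape of $B_1$ only bound the last row of $B_1$ by $p^{n}$ in the denominator, not by $1$. The paper does not deduce $L_1\in\mathbb{Z}_p[[z]][\delta]$ from the relation at all: it first proves (c'), namely $\bm{r}(L_1)\geq1$, by transporting the generic solution matrix through $\bm{\tau}(H_1)\in GL_n(\mathcal{A}(z,1))$ (which again requires the integrality/unit property of $H_1$), and only then invokes the Dwork--Frobenius bound (Proposition~\ref{prop_DF}) to conclude that the coefficients of $L_1$, already known to lie in $\mathbb{Q}_p[[z]]$, have Gauss norm at most $1$. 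So the logical order must be: integrality of $H_0$ (roots-of-unity argument), then (a'), (b'), then (c'), and only then integrality of $L_1$; your proposal inverts this and the inversion cannot be repaired without essentially reproducing the missing steps.
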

 
 In order to prove Lemma~\ref{lemm_paso_base},  we first prove Lemma~\ref{lemm_paso_0} whose proof is based upon the ideas found in \cite{christolpadique}.

\begin{lemm}\label{lemm_paso_0}
 Let $L$ be a MUM differential operator of order $n$ in $\mathbb{Z}_p[[z]][\delta]$, let $A$ be the companion matrix of $L$, and let $Y_L$ be the uniform part of $\delta X=AX$ and let $N=A(0)$. If $\bm{r}(L)\geq1$ then:
 \begin{enumerate}
 \item the matrix $H_0=(\Lambda_p(Y_L)(z^p))Y_L^{-1}(z)$ belongs to $GL_n(\mathbb{Z}_p[[z]])$,
 \item there is $F\in M_n(\mathcal{W}_{\mathbb{Q}_p}\cap\mathbb{Q}_p[[z]])$ such that $$\delta H_0=pF(z^p)H_0-H_0A,$$
 \item  the fundamental matrix of solutions of $\delta X=FX$ is given by $\Lambda_p(Y_L)z^{\frac{1}{p}N}$.

 \end{enumerate}

 \end{lemm}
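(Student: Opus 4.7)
The plan is to construct $F$ explicitly, verify the formal identities (2) and (3) by direct computation, and reduce the entire lemma to the single integrality assertion $H_0\in M_n(\mathbb{Z}_p[[z]])$.

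Set $\tilde Y:=\Lambda_p(Y_L)\in M_n(\mathbb{Q}_p[[z]])$; since $\tilde Y(0)=I$, $\tilde Y$ is invertible in $M_n(\mathbb{Q}_p[[z]])$, and I define
\[
F\;:=\;(\delta\tilde Y)\tilde Y^{-1}\;+\;\tfrac{1}{p}\,\tilde Y N\tilde Y^{-1}\;\in\;M_n(\mathbb{Q}_p[[z]]).
\]
Item (3) is then immediate: since $N$ is nilpotent, $z^{N/p}=\sum_{j=0}^{n-1}(N/p)^j(\log z)^j/j!$ is well-defined, $\delta(z^{N/p})=(N/p)z^{N/p}$, and so $\delta(\tilde Y z^{N/p})=(\delta\tilde Y+\tfrac{1}{p}\tilde Y N)z^{N/p}=F\tilde Y z^{N/p}$. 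For item (2), differentiating $H_0=\tilde Y(z^p)Y_L^{-1}$ using $\delta Y_L=AY_L-Y_L N$ and the chain rule $\delta(f(z^p))=p(\delta f)(z^p)$ gives
\[
\delta H_0\;=\;p(\delta\tilde Y)(z^p)Y_L^{-1}\,-\,H_0 A\,+\,H_0 Y_L N Y_L^{-1};
\]
substituting the identity $p(\delta\tilde Y)(z^p)=pF(z^p)\tilde Y(z^p)-\tilde Y(z^p)N$ (the defining relation of $F$ with $z$ replaced by $z^p$) yields $\delta H_0=pF(z^p)H_0-H_0 A$.

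Item (1) alone suffices to conclude the rest. Indeed, $H_0(0)=\tilde Y(0)Y_L(0)^{-1}=I$, so once $H_0\in M_n(\mathbb{Z}_p[[z]])$ is known, $H_0\in GL_n(\mathbb{Z}_p[[z]])$ automatically; rearranging (2) then gives
\[
pF(z^p)\;=\;(\delta H_0+H_0 A)H_0^{-1}\;\in\;M_n(\mathbb{Z}_p[[z]]),
\]
using $A\in M_n(\mathbb{Z}_p[[z]])$, which holds because $L\in\mathbb{Z}_p[[z]][\delta]$. Hence $F\in M_n(p^{-1}\mathbb{Z}_p[[z]])\subset M_n(\mathcal{W}_{\mathbb{Q}_p}\cap\mathbb{Q}_p[[z]])$, the inclusion following from the fact that every element of $p^{-1}\mathbb{Z}_p[[z]]$ has Gauss norm at most $p$.

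The main obstacle is therefore item (1). Here the hypothesis $\bm r(L)\geq 1$ must be used essentially; on its own, $H_0$ is merely a ratio of two power series with a priori rational $p$-adic coefficients. Following the ideas of \cite{christolpadique}, I would analyze the generic solution matrix $\mathcal U(x,z)=\sum_{j\geq 0}\frac{A_j}{j!z^j}(x-z)^j$, which by hypothesis lies in $M_n(\mathcal A(z,1))$ and satisfies $\|A_j\|\to 0$ by Remark~\ref{rema_a_j}; as in the proof of Proposition~\ref{prop_p_integral_frob}, the Frobenius image $\bm F(\mathcal U)$ also converges on the generic unit disk. The strategy is to express $H_0=\tilde Y(z^p)Y_L^{-1}$ via a Cartier-type manipulation of these convergent, Frobenius-related solutions near the generic point, so that integrality of the Taylor coefficients $A_j\in M_n(\mathbb{Z}_p[[z]])$ (inherited from $A$) together with the vanishing $\|A_j\|\to 0$ forces the coefficients of $H_0$ to lie in $\mathbb{Z}_p$.
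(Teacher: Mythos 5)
Your treatment of items (2) and (3) is correct and coincides with the paper's: you take the same matrix $F=[\delta(\Lambda_p(Y_L))+\tfrac1p\Lambda_p(Y_L)N]\Lambda_p(Y_L)^{-1}$, the verification that $\Lambda_p(Y_L)z^{N/p}$ solves $\delta X=FX$ is the same one-line computation, the identity $\delta H_0=pF(z^p)H_0-H_0A$ follows by the same differentiation of $H_0=\Lambda_p(Y_L)(z^p)Y_L^{-1}$ using $\delta Y_L=AY_L-Y_LN$, and the deduction $pF(z^p)=(\delta H_0+H_0A)H_0^{-1}\in M_n(\mathbb{Z}_p[[z]])$, hence $F\in M_n(\mathcal{W}_{\mathbb{Q}_p}\cap\mathbb{Q}_p[[z]])$, is exactly how the paper gets the membership statement for $F$. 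You also correctly identify that everything hinges on the single integrality claim $H_0\in M_n(\mathbb{Z}_p[[z]])$, and that this is where $\bm{r}(L)\geq1$ must enter.

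However, precisely at that point your proposal stops being a proof: the final paragraph announces a ``Cartier-type manipulation of the convergent solutions near the generic point'' but never carries it out, and this is the entire content of part (1). The paper's argument is concrete: one sets
$H_0=\frac1p\sum_{\xi^p=1}\sum_{j\geq0}A_j(z)\frac{(\xi-1)^j}{j!}$,
i.e.\ one averages the generic Taylor solution $\mathcal U$ over the $p$-th roots of unity. Integrality is then not an automatic consequence of $A_j\in M_n(\mathbb{Z}_p[[z]])$ and $\|A_j\|\to0$ (which only controls convergence); one needs the $p$-adic estimates $|\xi-1|\leq|p|^{1/(p-1)}$, $|(\xi-1)^j/j!|<1$, and crucially that $\bigl|\sum_{\xi^p=1}\frac{(\xi-1)^j}{j!}\bigr|\leq 1/p$, which compensates the prefactor $1/p$. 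One must then identify this average with $\Lambda_p(Y_L)(z^p)Y_L^{-1}$; this is where Christol's computation (\cite{christolpadique}, p.~165) and the MUM hypothesis (nilpotency $N^n=0$, so that the $\log z$ terms are handled correctly) are invoked, and it also yields $H_0(0)=I$, hence $H_0\in GL_n(\mathbb{Z}_p[[z]])$. None of these steps --- the explicit root-of-unity average, the divisibility estimate, or the identification with the Cartier image of the uniform part --- appears in your proposal, so part (1), the heart of the lemma, is left unproved.
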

 
\begin{proof}
 (1). Let us consider the sequence $\{A_j(z)\}_{j\geq0}$, where $A_0(z)=I$ is the identity matrix and $A_{j+1}(z)=\delta A_j(z)+A_j(z)(A(z)-jI)$. Given that $A\in M_n(\mathbb{Z}_p[[z]])$ then $A_j(z)\in M_n(\mathbb{Z}_p[[z]])$  for all $j\geq0$.  Since $X^p-1=(X-1)((X-1)^{p-1}+pt(X))$ with $t(X)\in\mathbb{Z}[X]$, it follows that if $\xi^p=1$ then $|\xi-1|\leq|p|^{1/(p-1)}$. Furthermore, for all integers $j\geq1$, we have $|p|^{j/(p-1)}<|j!|$. Thus, for all $j\geq1$,  $|(\xi-1)^j/j!|<1$. Since the norm is non\nobreakdash-Archimedean and $\sum_{\xi^p=1}(\xi-1)^j/j$ is a rational number, we conclude that, for all $j\geq0$, $\left|\sum_{\xi^p=1}\frac{(\xi-1)^j}{j!}\right|\leq1/p$. Thus, $\left|\sum_{\xi^p=1}\frac{(\xi-1)^j}{pj!}\right|\leq1$. That is, for all $j\geq0$, $\sum_{\xi^p=1}\frac{(\xi-1)^j}{pj!}\in\mathbb{Z}_p$. So, for all $j\geq0$, the matrix $$A_j\left(\sum_{\xi^p=1}\frac{(\xi-1)^j}{pj!}\right)$$
 belongs to $M_n(\mathbb{Z}_p[[z]])$. We set $$H_0=\sum_{j\geq0}A_j(z)\left(\sum_{\xi^p=1}\frac{(\xi-1)^j}{pj!}\right).$$
 Let us show that $H_0\in M_n(\mathbb{Z}_p[[z]])$. That is equivalent to saying that 
 \begin{equation}\label{eq_lim}
 \lim\limits_{j\rightarrow\infty}||A_j||\left|\sum_{\xi^p=1}\frac{(\xi-1)^j}{pj!}\right|=0
 \end{equation}
  because $\mathbb{Z}_p[[z]]$ is complete with respect to the Gauss norm and it is non\nobreakdash-Archimedean. Indeed, as $\bm{r}(L)\geq1$ then, by Remark~\ref{rema_a_j}, $\lim\limits_{j\rightarrow\infty}||A_j||=0$. Therefore, \eqref{eq_lim} follows immediately since, for all $j\geq0$, $\left|\sum_{\xi^p=1}\frac{(\xi-1)^j}{pj!}\right|\leq1$. 
   
 Now, we are going to prove that $H_0=\Lambda_p(Y_L)(z^p)Y_L^{-1}$. Let us write $Y_L=\sum_{j\geq0}Y_jz^j$. We have $N^n=0$ because $L$ is MUM and thus, it  follows from \cite[p.165]{christolpadique} that $H_0Y_L=\sum_{j\geq0}Y_{jp}z^{jp}$. So, $H_0Y_L=\Lambda_p(Y_L)(z^p)$. Therefore, $H_0=\Lambda_p(Y_L)(z^p)Y_L^{-1}$. Finally, $H_0\in GL_n(\mathbb{Z}_p[[z]])$ because $H_0(0)$ is the identity matrix.
 
(2). Now, we set  
 \begin{equation}\label{eq_frob_m}
 F(z)=[\delta(\Lambda_p(Y_L))+\frac{1}{p}\Lambda_p(Y_L)N](\Lambda_p(Y_L))^{-1}.
 \end{equation}
As $H_0Y_L=\Lambda_p(Y_L)(z^{p})$ then, from Equation \eqref{eq_frob_m} we obtain, $$pF(z^{p})=[p(\delta(\Lambda_p(Y_L)))(z^{p})+H_0Y_LN][Y_L^{-1}H_0^{-1}].$$ We also have $$\delta(H_0)Y_L+H_0(\delta Y_L)=\delta(H_0Y_L)=\delta(\Lambda_p(Y_L)(z^{p}))=p(\delta(\Lambda_p(Y_L)))(z^{p}).$$  Since $\delta(Y_Lz^{N})=AY_Lz^{N}$, it follows that $\delta Y_L=AY_L-Y_LN$. Thus, \[p(\delta(\Lambda_p(Y_L)))(z^{p})=\delta(H_0)Y_L+H_0[AY_L-Y_LN].\]
 Then, 
 \begin{align*}
 pF(z^{p})&=[p(\delta(\Lambda_p(Y_L)))(z^{p})+H_0Y_LN][Y_L^{-1}H_0^{-1}]\\
 &=[(\delta H_0)Y_L+H_0AY_L][Y_L^{-1}H_0^{-1}]\\
 &=(\delta H_0)H_0^{-1}+H_0AH_0^{-1}.
 \end{align*}
 Consequently,  $$\delta H_0=pF(z^{p})H_0-H_0A.$$ Since $pF(z^p)=(\delta H_0+H_0A)H_{0}^{-1}\in M_n(\mathbb{Z}_p[[z]])$ we obtain $F(z)\in M_n(\mathcal{W}_{\mathbb{Q}_p}\cap\mathbb{Q}_p[[z]])$. 
 
 (3.) Finally, we show that the matrix $\Lambda_p(Y_L)X^{\frac{1}{p}N}$ is the fundamental matrix of solutions of $\delta X=FX$. It is clear that $\Lambda_p(Y_L)(0)=I$ and from Equality~\eqref{eq_frob_m}, we obtain $F(z)\Lambda_p(Y_L)=\delta(\Lambda_p(Y_L))+\Lambda_p(Y_L)\frac{1}{p}N$. Thus,
\begin{align*}
\delta(\Lambda_p(Y_L)X^{\frac{1}{p}N})&=\delta(\Lambda_p(Y_L))X^{\frac{1}{p}N}+\Lambda_p(Y_L)\frac{1}{p}NX^{\frac{1}{p}N}\\
&=(\delta(\Lambda_p(Y_L))+\Lambda_p(Y_L)\frac{1}{p}N)X^{\frac{1}{p}N}\\
&=F(z)\Lambda_p(Y_L)X^{\frac{1}{p}N}.
\end{align*}
This completes the proof.
\end{proof}

The following result is known as "Dwork-Frobenius" Theorem and it is usually proven for differential operators with coefficients in $E_{p}$ (see \cite[Proposition 8.1]{C81}). For the sake of completeness we prove it following the same lines of \cite[Proposition 8.1]{C81}.

 \begin{prop}\label{prop_DF}
Let $L=\delta^n+a_{n-1}(z)\delta^{n-1}+\cdots+a_1(z)\delta+a_0(z)$ be a differential operator with $a_i(z)\in\mathcal{W}_{p}\cap\mathbb{C}_p[[z]]$ for all $0\leq i< n$. If $\bm{r}(L)\geq1$ then $|a_i|_{\mathcal{G}}\leq 1$ for all $0\leq i< n$.
\end{prop}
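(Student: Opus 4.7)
I would argue the contrapositive: assuming that $M := \max_{0\leq i < n}|a_i|_{\mathcal{G}} > 1$, I will show that $\bm{r}(L) < 1$. Using the formula $1/\bm{r}(L) = \limsup_{j\to\infty}\|A_j/j!\|_{\mathcal{G}}^{1/j}$ recorded just before the statement, together with the elementary identity $\|A_j/j!\|_{\mathcal{G}} = \|A_j\|_{\mathcal{G}}/|j!|_p$ and the fact that $|j!|_p \leq 1$, it is enough to produce a lower bound
\[
\limsup_{j \to \infty}\|A_j\|_{\mathcal{G}}^{1/j} > 1.
\]
Note that $\|A\|_{\mathcal{G}} = \max(1,M) = M$ because the $a_i$'s occupy the last row of the companion matrix.

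The core of the proof is a combinatorial analysis of the recurrence $A_0 = I$, $A_{j+1} = \delta A_j + A_j(A - jI)$. By induction, every entry of $A_j$ is a finite sum of noncommutative monomials built from the coefficients $a_i$, their iterated $\delta$-derivatives, and integer factors $k \leq j$. Since $|\delta f|_{\mathcal{G}} \leq |f|_{\mathcal{G}}$ for any $f \in \mathcal{W}_p$ and $|k|_p \leq 1$ for any integer $k$, each such monomial has Gauss norm at most $\|A\|_{\mathcal{G}}^{j} = M^j$. The decisive step is to exhibit, for an infinite set of $j$'s, one monomial that attains Gauss norm bounded below by $M^{j/n}$ \emph{and} whose contribution cannot be cancelled by the other summands. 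Following Proposition~8.1 of~\cite{C81}, I would fix an index $i_0$ with $|a_{i_0}|_{\mathcal{G}} = M$ and concentrate on the $(n,i_0+1)$-entry of $A_{kn}$ for $k \geq 1$. The ``multiplicative branch'' of the recurrence — the one that always selects the $A_j \cdot A$ term and never the $\delta A_j$ or $-j A_j$ contribution — produces the pure monomial $\pm a_{i_0}^{k}$ in that entry with a coefficient that is a $p$-adic unit. This, combined with the ultrametric inequality, would yield $\|A_{kn}\|_{\mathcal{G}} \geq M^k$ and hence $\limsup_j \|A_j\|_{\mathcal{G}}^{1/j} \geq M^{1/n} > 1$, the desired contradiction.

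The main obstacle is precisely this isolation of the leading monomial. Because $\delta^{\ell}(a_i)$ can in principle have Gauss norm equal to $|a_i|_{\mathcal{G}}$ (take $a_i$ a $\mathbb{C}_p$-multiple of a single monomial in $z$), there is no automatic strict decrease from differentiation, and one cannot simply argue that the non-multiplicative summands are dominated in Gauss norm. The bookkeeping must therefore be carried out at the level of individual monomials, using the special shape of the companion matrix (only the last row contains the $a_i$'s) to constrain which monomials can reach a given entry of $A_{kn}$ and through which branches of the recurrence. Once one verifies that the targeted monomial $\pm a_{i_0}^k$ appears without any derivative factor, with a $p$-adic unit coefficient, and that no other branch reproduces the same monomial with a matching coefficient of equal norm, the ultrametric estimate closes the argument.
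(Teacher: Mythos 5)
Your plan is the contrapositive route: if $M=\max_i|a_i|_{\mathcal{G}}>1$, force $\limsup_j\|A_j\|^{1/j}>1$ by exhibiting an uncancelled monomial $\pm a_{i_0}^k$ in a targeted entry of $A_{kn}$. The step you yourself flag as "the main obstacle" is in fact a genuine gap, and it is not a bookkeeping matter that can be patched by tracking monomials. Since the $a_i$ are arbitrary elements of $\mathcal{W}_p\cap\mathbb{C}_p[[z]]$ (not formal indeterminates), monomials that are formally distinct can coincide or cancel exactly: e.g.\ for $n=2$, $a_1=0$, $a_0=cz$ with $|c|>1$, one has $\delta a_0=a_0$, and a direct computation gives $A_2=\delta A+A^2-A$ with $(2,1)$-entry $-\delta a_0+a_0=0$, so the very entry you target has norm $0$ at $k=1$ even though $M=|c|>1$ (the growth of $\|A_j\|$ is carried by other entries and other $j$). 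Moreover the coefficient $\pm a_{i_0}$ you isolate there arises from the $-jI$ branch, not the "multiplicative branch", and in general the relevant growth rate is $\max_i|a_i|^{1/(n-i)}$ rather than $M^{1/n}$. So the ultrametric inequality cannot "close the argument" as written: you would need a strict, unique maximal term, and no formal analysis of the recurrence guarantees that for arbitrary $a_i$. Making the contrapositive rigorous requires a genuinely different tool (spectral-norm/Newton-polygon arguments for twisted polynomials, as in Dwork--Robba or Kedlaya), which is absent from the proposal.

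For comparison, the paper's proof sidesteps the recurrence entirely: it transports $L$ to the generic disk via $\bm{\tau}$, uses $\bm{r}(L)\geq1$ to obtain $n$ solutions $f_1,\dots,f_n\in\mathcal{A}(z,1)$ linearly independent over $Frac(\mathcal{W}_p)$, factors $\mathcal{L}=\mathcal{L}_n\circ\cdots\circ\mathcal{L}_1$ with $\mathcal{L}_i=\delta_x-\delta_xg_i/g_i$ and $g_i\in\mathcal{M}(z,1)$, and then uses the logarithmic-derivative bound $|\delta_xg/g|_r\leq1$ for every $r<1$ together with $|\bm{\tau}(a_i)|_r=|a_i|_{\mathcal{G}}$ to conclude $|a_i|_{\mathcal{G}}\leq1$. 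That soft factorization argument (Christol's) is exactly what replaces the non-cancellation analysis you would otherwise have to supply.
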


\begin{proof}
For every $r<1$, the ring $\mathcal{A}(z,1)$ is equipped with the following absolute value $|\sum_{n\geq0}f_n(x-z)^n|_r=\sup\{|f_n|_{\mathcal{G}}r^n\}_{n\geq0}$. This absolute value extends in a natural way to $\mathcal{M}(z,1):=\mathrm{Frac}(\mathcal{A}(z,1))$ and, for all $g\in\mathcal{M}(z,1)$, $|\delta_x g/g|_r\leq1$. Let us consider the differential operator $\mathcal{L}=\delta^n_x+\bm{\tau}(a_{n-1})\delta_x^{n-1}+\cdots+\bm{\tau}(a_1)\delta_x+\bm{\tau}(a_0)$. By assumption, there are $f_1,\ldots, f_n\in\mathcal{A}(z,1)$ linearly independent over $\mathrm{Frac}(\mathcal{W}_{p})$ such that $\mathcal{L}(f_i)=0$ for every $1\leq i\leq n$. By induction, we set $g_1=f_1$ and $g_i=\mathcal{L}_{i-1}\circ\cdots\circ\mathcal{L}_1(f_i)$ with $\mathcal{L}_i=\delta_x-\delta_xg_i/g_i$. So, $g_1,\ldots, g_n$ belong to $\mathcal{M}(z,1)$ and $f_1,\ldots, f_n$ are solutions of $\mathcal{L}_n\circ\cdots\circ\mathcal{L}_1$. Since $\mathcal{L}_n\circ\cdots\circ\mathcal{L}_1=\delta_x^{n}+\cdots$, the differential operator $\mathcal{L}-\mathcal{L}_n\circ\cdots\circ\mathcal{L}_1$ has order at least $n-1$ and has $f_1,\ldots, f_n$ as solutions. Thus,  $\mathcal{L}=\mathcal{L}_n\circ\cdots\circ\mathcal{L}_1$. As, for every $1\leq i\leq n$, $|\delta_x g_i/g_i|_r\leq 1$ then, for every $0\leq i< n$, $|\bm{\tau}(a_i)|_{r}\leq1$.  Since $a_i\in\mathcal{W}_{p}\cap\mathbb{C}_p[[z]]$, $\left|\frac{(d/dz)^j(a_i)}{j!}\right|_{\mathcal{G}}\leq|a_i|_{\mathcal{G}}$ for all $j\geq0$. Consequently, $|\bm{\tau}(a_i)|_r=|a_i|_{\mathcal{G}}$. Therefore, $|a_i|_{\mathcal{G}}\leq 1$ for all $0\leq i< n$.
\end{proof}

We are now ready to prove Lemma~\ref{lemm_paso_base}.
  
  \begin{proof}[Proof of Lemma~\ref{lemm_paso_base}]
 According to Lemma~\ref{lemm_paso_0}, we have $F\in M_n(\mathcal{W}_{\mathbb{Q}_p}\cap\mathbb{Q}_p[[z]])$ such that 
 \begin{equation}\label{eq_equiv2}
 \delta H_0=pF(z^p)H_0-H_0A,
 \end{equation}
with $H_0=\Lambda_p(Y_L)(z^p)Y_L^{-1}\in GL_n(\mathbb{Z}_p[[z]])$. Let us write $F=(b_{i,j})_{1\leq i,j\leq n}$. We set $$L_1:=\delta^n-b_{n,n}\delta^{n-1}-\frac{1}{p}b_{n,n-1}\delta^{n-2}-\cdots\frac{1}{p^{n-i}}b_{n,i}\delta^{i-1}-\cdots-\frac{1}{p^{n-2}}b_{n,2}\delta-\frac{1}{p^{n-1}}b_{n,1}.$$

It is clear that the coefficients of $L_1$ belong to $\mathcal{W}_{\mathbb{Q}_p}\cap\mathbb{Q}_p[[z]]$. Actually, we are going to see at the end of this proof that $L_1\in\mathbb{Z}_p[[z]][\delta]$. From Equality~\eqref{eq_equiv2}, we deduce that $pF(0)=N$. Since $L$ is MUM, the last row of $N$ is equal to zero. So, for all $i\in\{1,\ldots, n-1\}$, $b_{n,i}(0)=0$. For this reason $L_1$ is MUM.

(a') Let $B_1$ be the companion matrix of $L_1$. We are going to see that  \[T=\mathrm{diag}(1,1/p,\ldots, 1/p^{n-1})\Lambda_p(Y_L)\mathrm{diag}(1,p,\ldots, p^{n-1})z^{N},\] is the fundamental matrix of solutions of $\delta X=B_1X$. For this purpose, we split the proof into three steps.

\textbf{First step} Let us write $Y_L=(f_{i,j})_{1\leq i,j\leq n}$. Then, for all integers $i\in\{1,\ldots, n-1\}$ and $k\in\{1,\ldots,n\}$, $f_{i,k-1}+\delta f_{i,k}=f_{i+1,k}$. In fact, for every $i,j\in\{1,\ldots,n\}$, we set $$\phi_{i,j}=\sum_{k=1}^jf_{i,k}\frac{(\log z)^{j-k}}{(j-k)!}.$$
Notice that $Y_Lz^{N}=(\phi_{i,j})_{1\leq i,j\leq n}$. Since $Y_Lz^{N}$ is the fundamental matrix of $\delta X=AX$ and $A$ is the companion matrix of $L$, it follows that, for every $i\in\{1,\ldots,n-1\}$ and $j\in\{1,\ldots, n\}$, $\phi_{i+1,j}=\delta\phi_{i,j}.$  Therefore,  $$\sum_{k=1}^jf_{i+1,k}\frac{(\log z)^{j-k}}{(j-k)!}=\phi_{i+1,j}=\delta\phi_{i,j}=\delta(f_{i,1})\frac{(\log z)^{j-1}}{(j-1)!}+\sum_{k=2}^j(f_{i,k-1}+\delta(f_{i,k}))\frac{(\log z)^{j-k}}{(j-k)!}.$$
As $\log z$ is transcendental over $\mathbb{Q}_p[[z]]$, it follows from the previous equality that, for all $k\in\{1,\ldots, n\}$, $f_{i,k-1}+\delta f_{i,k}=f_{i+1,k}$. 

\textbf{Second step} For every $j\in\{1,\ldots,n\}$, we set $$\theta_j=\sum_{k=1}^{j}p^{k-1}\Lambda_p(f_{1,k})\frac{(\log z)^{j-k}}{(j-k)!}.$$
We are going to show that $\theta_j$ is a solution of $L_1$ for every $1\leq j\leq n$. 

Let us write $\Lambda_p(Y_L)X^{\frac{1}{p}N}=(\eta_{i,j})_{1\leq i,j\leq n}$. Then, for every $i,j\in\{1,\ldots,n\}$, $$\eta_{i,j}=\sum_{k=1}^j\Lambda_p(f_{i,k})\frac{(\log z)^{j-k}}{p^{j-k}(j-k)!}.$$
On the one hand, we are going to see that, for every $j,l\in\{1,\ldots,n\}$, $$\frac{1}{p^{n-l}}\delta^{l-1}\theta_j=\frac{1}{p^{n-j}}\eta_{l,j}.$$ To prove this equality, we are going to proceed by induction on $l\in\{1,\ldots, n\}$. For $l=1$, it is clear that $\frac{1}{p^{n-1}}\theta_j=\frac{1}{p^{n-j}}\eta_{1,j}$. Now, we suppose that for some $l\in\{1,\ldots,n-1\}$, we have $\frac{1}{p^{n-l}}\delta^{l-1}\theta_j=\frac{1}{p^{n-j}}\eta_{l,j}$. So, $\frac{1}{p^{n-l}}\delta^{l}\theta_j=\frac{1}{p^{n-j}}\delta(\eta_{l,j})$. We prove now that $\delta(\eta_{l,j})=\frac{1}{p}\eta_{l+1,j}$. In fact, as  $\Lambda_p\circ\delta=p\delta\circ\Lambda_p$ and, according to the first step, $f_{l,k-1}+\delta(f_{l,k})=f_{l+1,k}$ for all $k\in\{1,\ldots, n\}$ then
\begin{align*}
\delta(\eta_{l,j})&=\sum_{k=1}^j\delta(\Lambda_p(f_{l,k}))\frac{(\log z)^{j-k}}{p^{j-k}(j-k)!}+\Lambda_p(f_{l,k})\frac{(\log z)^{j-k-1}}{p^{j-k}(j-k-1)!}\\
&=\sum_{k=1}^j\frac{1}{p}\Lambda_p(\delta(f_{l,k}))\frac{(\log z)^{j-k}}{p^{j-k}(j-k)!}+\frac{1}{p}\Lambda_p(f_{l,k})\frac{(\log z)^{j-k-1}}{p^{j-k-1}(j-k-1)!}\\
&=\frac{1}{p}\left[\Lambda_p(\delta(f_{l,1}))\frac{(\log z)^{j-1}}{p^{j-1}(j-1)!}+\sum_{k=2}^j\Lambda_p(f_{l,k-1}+\delta f_{l,k}))\frac{(\log z)^{j-k}}{p^{j-k}(j-k)!}\right]\\
&=\frac{1}{p}\left[\Lambda_p(f_{l+1,1})\frac{(\log z)^{j-1}}{p^{j-1}(j-1)!}+\sum_{k=2}^j\Lambda_p(f_{l+1,k})\frac{(\log z)^{j-k}}{p^{j-k}(j-k)!}\right]\\
&=\frac{1}{p}\eta_{l+1,j}.
\end{align*}
Thus, we obtain  $$\frac{1}{p^{n-l}}\delta^{l}\theta_j=\frac{1}{p^{n-j}}\delta(\eta_{l,j})=\frac{1}{p^{n-j}}(\frac{1}{p}\eta_{l+1,j}).$$ Whence, $$\frac{1}{p^{n-l-1}}\delta^{l}\theta_j=\frac{1}{p^{n-j}}\eta_{l+1,j}.$$ So, for every $j,l\in\{1,\ldots,n\}$, $\frac{1}{p^{n-l}}\delta^{l-1}\theta_j=\frac{1}{p^{n-j}}\eta_{l,j}$.

On the other hand, by Lemma~\ref{lemm_paso_0}, we know that $\Lambda_p(Y_L)X^{\frac{1}{p}N}$ is the fundamental matrix of $\delta X=FX$. Hence, for every $j\in\{1,\ldots,n\}$, \[F\begin{pmatrix} 
\eta_{1,j}\\
\eta_{2,j}\\
\vdots\\
\eta_{n,j}\\
\end{pmatrix}
=\begin{pmatrix} 
\delta(\eta_{1,j})\\
\delta(\eta_{2,j})\\
\vdots\\
\delta(\eta_{n,j}))\\
\end{pmatrix}.
\]
In particular, for every $j\in\{1,\ldots,n\}$, $$b_{n,1}\eta_{1,j}+b_{n,2}\eta_{2,j}+\cdots+b_{n,k}\eta_{k,j}+\cdots+b_{n,n}\eta_{n,j}=\delta(\eta_{n,j}).$$
Multiplying the previous equality by $1/p^{n-j}$ and by using the fact that, for every $l\in\{1,\ldots, n\}$, $\frac{1}{p^{n-j}}\eta_{l,j}=\frac{1}{p^{n-l}}\delta^{l-1}\theta_j$, we get  $$\frac{1}{p^{n-1}}b_{n,1}\theta_{j}+\frac{1}{p^{n-2}}b_{n,2}\delta\theta_{j}+\cdots+\frac{1}{p^{n-k}}b_{n,k}\delta^{k-1}\theta_{j}+\cdots+b_{n,n}\delta^{n-1}\theta_{j}=\delta^n\theta_j.$$
Therefore, $\theta_j$ is a solution of $L_1$. 

\textbf{Third step} We are going to see that $T=(\delta^{i-1}\theta_j)_{1\leq i,j\leq n}$.  As we have already seen, for every $i,j\in\{1,\ldots,n\}$, $$\frac{1}{p^{n-i}}\delta^{i-1}\theta_j=\frac{1}{p^{n-j}}\eta_{i,j}.$$ Hence,  for every $i,j\in\{1,\ldots,n\}$,
\begin{align*}
\delta^{i-1}\theta_j=\frac{p^j}{p^i}\eta_{i,j}&=\frac{p^j}{p^i}\sum_{k=1}^j\Lambda_p(f_{i,k})\frac{(\log z)^{j-k}}{p^{j-k}(j-k)!}=\sum_{k=1}^j\frac{\Lambda_p(f_{i,k})}{p^{i-k}}\frac{(\log z)^{j-k}}{(j-k)!}.
\end{align*}
But \[\mathrm{diag}(1,1/p,\ldots, 1/p^{n-1})\Lambda_p(Y_L)\mathrm{diag}(1,p,\ldots,p^{n-1})=\left(\frac{\Lambda_p(f_{i,k})}{p^{i-k}}\right)_{1\leq i,k\leq n}.
\]
By definition $$T=\mathrm{diag}(1,1/p,\ldots, 1/p^{n-1})\Lambda_p(Y_L)\mathrm{diag}(1,p,\ldots, p^{n-1})z^{N}.$$ Thus, it follows that $$T=(\delta^{i-1}\theta_j)_{1\leq i,j\leq n}.$$ 

Finally, notice that $\theta_1,\ldots,\theta_n$ are linearly independent over $\mathbb{Q}_p$ because $\log z$ is transcendental over $\mathbb{Q}_p[[z]]$. Since $B_1$ is the companion matrix of $L_1$ and, according to the second step, $\theta_1,\ldots, \theta_n$ are solutions of $L_1$, it follows that the matrix $(\delta^{i-1}\theta_j)_{1\leq i,j\leq n}$ is a fundamental matrix of solutions of $\delta X=B_1X$. Since $T=(\delta^{i-1}\theta_j)_{1\leq i,j\leq n}$, we conclude that $T$ is the fundamental matrix of $\delta X=B_1X$.

(b') 
From Lemma~\ref{lemm_paso_0}, we know that the matrix $H_0^{-1}=Y_L(\Lambda_p(Y_L)(z^p))^{-1}$ belongs to $GL_n(\mathbb{Z}_p[[z]])$. Thus,  $$H_1=Y_L(\Lambda_p(Y_L)(z^{p}))^{-1}\mathrm{diag}(1,p,\ldots, p^{n-1})\in GL_n(\mathcal{W}_{\mathbb{Q}_p}\cap\mathbb{Q}_p[[z]])\cap M_n(\mathbb{Z}_p[[z]]).$$ Since $H^{-1}_0\in M_n(\mathbb{Z}_p[[z]])$ and $H^{-1}_0(0)$ is the identity matrix, we have $||H_0^{-1}||=1$. Thereby, $||H_1||\leq 1$. But $H_1(0)=\mathrm{diag}(1,p,\ldots, p^{n-1})$ and thus, $||H_1||=1$. Now, we are going to show that $$\delta(H_1)=AH_1-pH_1B_1(z^{p}).$$
We infer from (a') that
 \[T(z^p)=\mathrm{diag}(1,1/p,\ldots, 1/p^{n-1})\Lambda_p(Y_L)(z^p)\mathrm{diag}(1,p,\ldots, p^{n-1})X^{pN}\] 
is the fundamental matrix of solutions of $\delta X=pB_1(z^p)X$. Furthermore, \[H_1T(z^p)=Y_L\mathrm{diag}(1,p,\ldots, p^{n-1})X^{pN}.\] 
But \[\mathrm{diag}(1,p,\ldots, p^{n-1})X^{pN}=z^{N}\mathrm{diag}(1,p,\ldots, p^{n-1}). 
\] 
Consequently, \[H_1T(z^p)=Y_Lz^{N}\mathrm{diag}(1,p,\ldots, p^{n-1}).\]
Thus, $H_1T(z^p)$ is a fundamental matrix of $\delta X=AX$. So that, we have 
\begin{align*}
AH_1T(z^p)&=\delta(H_1T(z^p))\\
&=\delta(H_1)T(z^p)+H_1\delta(T(z^p))\\
&=\delta(H_1)T(z^p)+H_1(pB_1(z^p))T(z^p).
\end{align*}
Hence, 
\begin{equation}\label{eq_equiv_lambd_frob}
\delta(H_1)=AH_1-pH_1B_1(z^{p}).
\end{equation}
This completes the proof of (b').

(c') From Equations~\eqref{eq_com_der}, \eqref{eq_com_frob}, and ~\eqref{eq_equiv_lambd_frob} we get 
\begin{equation}\label{eq_tau_equiv}
\delta_x(\bm{\tau}(H_1))=\bm{\tau}(A)\bm{\tau}(H_1)-p\bm{\tau}(H_1)\bm{F}(\bm{\tau}(B_1)).
\end{equation}
We set $C_0=I$ and $C_{j+1}=\delta C_j+C_j(B_1-jI)$ for $j\geq0$. We want to see that $\bm{r}(L_1)\geq1$. By definition, that is equivalent to proving that for all $r<1$, $\lim\limits_{j\rightarrow\infty}\left|\left|\frac{C_j}{j!}\right|\right|r^j=0$. We know that $$\mathcal{\widetilde{U}}=\sum_{j\geq0}\frac{C_j(z)}{j!z^{j}}(x-z)^j\in GL_n(\mathcal{W}_p[[x-z]])$$ is a solution of the system $\delta_xX=\bm{\tau}(B_1)X$. Therefore, $\delta_x\bm{F}(\mathcal{\widetilde{U}})=p\bm{F}(\bm{\tau}(B_1))\bm{F}(\mathcal{\widetilde{U}})$. By Equation~\eqref{eq_tau_equiv}, we get that $\bm{\tau}(H_1)\bm{F}(\mathcal{\widetilde{U}})$ is a solution of the system $\delta_xX=\bm{\tau}(A)X$. Since $H_1\in GL_n(\mathcal{W}_{\mathbb{Q}_p}\cap\mathbb{Q}_p[[z]])$, we have $\bm{\tau}(H_1)\in GL_n(\mathcal{A}(z,1))\cap GL_n((\mathcal{W}_{p}[[x-z]])$. As $\bm{F}(\mathcal{\widetilde{U}})\in GL_n(\mathcal{W}_{p}[[x-z]])$ then $\bm{\tau}(H_1)\bm{F}(\mathcal{\widetilde{U}})\in GL_n(\mathcal{W}_{p}[[x-z]])$ and since $\bm{\tau}(H_1)\bm{F}(\mathcal{\widetilde{U}})$ is a solution of the system $\delta_xX=\bm{\tau}(A)X$, there is $C\in GL_n(\mathcal{W}_{p})$ such that $\bm{\tau}(H_1)\bm{F}(\mathcal{\widetilde{U}})=\mathcal{U}C$. Thus, $\bm{F}(\mathcal{\widetilde{U}})=\mathcal{U}C\bm{\tau}(H_1)^{-1}$. By assumption, $\mathcal{U}$ belongs to $M_n(\mathcal{A}(z,1))$ and we know that $\bm{\tau}(H_1)$ belongs to $GL_n(\mathcal{A}(z,1))$. Whence, $\bm{F}(\mathcal{\widetilde{U}})\in M_n(\mathcal{A}(z,1))$. By definition, $$\bm{F}(\mathcal{\widetilde{U}})=\sum_{j\geq0}\frac{C_j(z^p)}{j!z^{jp}}(x^p-z^p)^j.$$
Thus, for all $r<1$, $\lim\limits_{j\rightarrow\infty}\left|\left|\frac{C_j(z^p)}{j!z^{jp}}\right|\right|r^j=0$. So, for all $r<1$, $\lim\limits_{j\rightarrow\infty}\left|\left|\frac{C_j}{j!}\right|\right|r^j=0$ since $\left|\left|\frac{C_j(z^p)}{j!z^{jp}}\right|\right|=\left|\left|\frac{C_j}{j!z^{j}}\right|\right|$. Consequently, $\bm{r}(L_1)\geq1$.

Finally, we are in a position to apply Proposition~\ref{prop_DF} to $L_1$ and we conclude that the coefficients of $L_1$ have norm less than or equal to 1. But we know that the coefficients of $L_1$ belong to $\mathbb{Q}_p[[z]]$ and so $L_1\in\mathbb{Z}_p[[z]][\delta]$.
 \end{proof}
 
 \begin{prop}\label{prop_ant}
Let $L$ be a MUM differential operator of order $n$ in $\mathbb{Z}_p[[z]][\delta]$, let $A$ be the companion matrix of $L$, $N=A(0)$, and let $Y_Lz^{N}$ be the fundamental matrix of solutions of $\delta X=AX$. If $\bm{r}(L)\geq1$ then, for every integer $m>0$, there exists a MUM differential operator $L_m$ of order $n$ in $\mathbb{Z}_p[[z]][\delta]$ such that:
 \begin{enumerate}[label=(\alph*)]
 
  \item  the fundamental matrix of solutions of $\delta X=B_mX$  is given by \[
\mathrm{diag}(1,1/p^m,\ldots, 1/p^{m(n-1)})\Lambda^m_p(Y_L)\mathrm{diag}(1,p^m,\ldots, p^{m(n-1)})z^{N},\] 
where $B_m$ is the companion matrix of $L_m$,

\item let $H_m=Y_L(\Lambda^m_p(Y_L)(z^{p^m}))^{-1}\mathrm{diag}(1,p^m,p^{2m}\ldots, p^{m(n-1)})$. Then $$H_m\in M_n(\mathbb{Z}_p[[z]])\cap GL_n(\mathcal{W}_{\mathbb{Q}_p}\cap\mathbb{Q}_p[[z]]),$$ $||H_m||=1$, and $$\delta(H_m)=AH_m-p^mH_mB_m(z^{p^m}),$$

\item $\bm{r}(L_m)\geq1$.
 \end{enumerate}
 \end{prop}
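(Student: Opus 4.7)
I would prove Proposition~\ref{prop_ant} by induction on $m$, with Lemma~\ref{lemm_paso_base} giving both the base case $m=1$ and the induction step. Concretely, assuming $L_m$ has been constructed satisfying (a), (b), (c), apply Lemma~\ref{lemm_paso_base} to $L_m$ itself (which is a MUM operator in $\mathbb{Z}_p[[z]][\delta]$ with $\bm{r}(L_m)\geq 1$ by (c)) to produce an operator that I will call $L_{m+1}$, together with an intertwiner $H'_1$ between $L_m$ and $L_{m+1}$ satisfying $\delta H'_1=B_mH'_1-pH'_1B_{m+1}(z^p)$ and $\bm{r}(L_{m+1})\geq 1$. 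The content of the proof is then to check that the compositions of the fundamental matrices and intertwiners coming from iterating $\Lambda_p$ give back exactly the matrices described in (a) and (b) for the exponent $m+1$.

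\textbf{Verification of (a).} Let $D_k=\mathrm{diag}(1,p^k,\ldots,p^{k(n-1)})$. By the inductive hypothesis the uniform part of $L_m$ is $Y_{L_m}=D_m^{-1}\Lambda_p^m(Y_L)D_m$, and since the Cartier operator commutes with multiplication by constant diagonal matrices, $\Lambda_p(Y_{L_m})=D_m^{-1}\Lambda_p^{m+1}(Y_L)D_m$. Substituting this into the fundamental matrix of $\delta X=B_{m+1}X$ supplied by Lemma~\ref{lemm_paso_base}(a') (applied to $L_m$), the two middle diagonal factors cancel and the outer diagonals collapse to $D_{m+1}^{-1}$ and $D_{m+1}$, giving exactly the fundamental matrix claimed in (a) for $m+1$.

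\textbf{Verification of (b).} Set $H_{m+1}:=H_m\cdot H'_1(z^{p^m})$. Unfolding definitions using $Y_{L_m}=D_m^{-1}\Lambda_p^m(Y_L)D_m$ and the analogous expression for $\Lambda_p(Y_{L_m})(z^p)$, the factor $D_m^{-1}\Lambda_p^m(Y_L)(z^{p^m})$ appears twice and cancels, leaving
\[
H_{m+1}=Y_L\bigl(\Lambda_p^{m+1}(Y_L)(z^{p^{m+1}})\bigr)^{-1}D_{m+1},
\]
which is the matrix required in (b). Applying $\delta$ to the product $H_m\cdot H'_1(z^{p^m})$ and using the two Frobenius-type equations $\delta H_m=AH_m-p^mH_mB_m(z^{p^m})$ and $\delta H'_1=B_mH'_1-pH'_1B_{m+1}(z^p)$, the cross terms involving $B_m$ cancel and we obtain $\delta H_{m+1}=AH_{m+1}-p^{m+1}H_{m+1}B_{m+1}(z^{p^{m+1}})$. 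Since $H_m\in M_n(\mathbb{Z}_p[[z]])$ and $H'_1(z^{p^m})\in M_n(\mathbb{Z}_p[[z]])$, the product lies in $M_n(\mathbb{Z}_p[[z]])$; both factors are invertible in $\mathcal{W}_{\mathbb{Q}_p}\cap\mathbb{Q}_p[[z]]$, so $H_{m+1}$ is too. Non-Archimedean estimates give $\|H_{m+1}\|\leq 1$, and $H_{m+1}(0)=D_{m+1}$ forces $\|H_{m+1}\|=1$.

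\textbf{Verification of (c).} This is nothing but part (c') of Lemma~\ref{lemm_paso_base} applied to $L_m$, which already yields $\bm{r}(L_{m+1})\geq 1$. There is no genuine obstacle: the base case $m=1$ is exactly the lemma, and the only real work is the bookkeeping above, namely checking that the diagonal conjugations by powers of $p$ combine correctly under iteration and that the product $H_m\cdot H'_1(z^{p^m})$ satisfies the Frobenius equation for $p^{m+1}$. The cleanest writeup is to set $L_{m+1}$ and $H_{m+1}$ by the formulas above and verify (a), (b), (c) in that order.
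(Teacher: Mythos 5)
Your proposal is correct and follows essentially the same route as the paper: induction on $m$ with Lemma~\ref{lemm_paso_base} supplying both the base case and the inductive step, the identity $\Lambda_p(Y_{L_m})=D_m^{-1}\Lambda_p^{m+1}(Y_L)D_m$ handling (a), the product $H_{m+1}=H_m\cdot H_1(z^{p^m})$ (with the chain-rule factor $p^m$ when substituting $z\mapsto z^{p^m}$ into the intertwining equation) handling (b), and (c') giving (c). The only nitpick is verbal: in (a) the diagonal factors combine into $D_{m+1}^{\pm 1}$ rather than ``cancel,'' exactly as in the paper's computation.
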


\begin{proof}
We proceed by induction on $m\in\mathbb{Z}_{>0}$. For $m=1$ we are in a position to apply Lemma~\ref{lemm_paso_base} and thus, there is a MUM differential operator $L_1$ of order $n$ in $\mathbb{Z}_p[[z]][\delta]$ satisfying the conditions (a)-(c). Now, suppose that for some integer $m>0$ there is a MUM differential operator $L_m$ of order $n$ in $\mathbb{Z}_p[[z]][\delta]$ satisfying the conditions (a)-(c), that is, if $B_m$ is the companion matrix of $L_m$ then we have

\textbullet\quad the fundamental matrix of solutions of the system $\delta X=B_mX$ is given by $Y_{L_m}z^{N}$, where 
\begin{equation}\label{eq_l_m}
Y_{L_m}=\mathrm{diag}(1,1/p^m,\ldots, 1/p^{m(n-1)})\Lambda^m_p(Y_L)\mathrm{diag}(1,p^m,\ldots, p^{m(n-1)}),
\end{equation}
\textbullet\quad the matrix $H_m$ given by  \[Y_L(\Lambda^m_p(Y_L)(z^{p^m}))^{-1}\mathrm{diag}(1,p^m,\ldots, p^{m(n-1)})
\]
belongs to  $M_n(\mathbb{Z}_p[[z]])\cap GL_n(\mathcal{W}_{\mathbb{Q}_p}\cap\mathbb{Q}_p[[z]])$, $||H_m||=1$, and
\begin{equation}\label{eq_equiv}
\delta(H_m)=AH_m-p^mH_mB_m(z^{p^m}),
\end{equation}

\textbullet\quad $\bm{r}(L_m)\geq1$.

 So, we are in a position to apply Lemma~\ref{lemm_paso_base} to the differential operator $L_{m}$ and thus, there exists a MUM differential operator $L_{m+1}$ of order $n$  in $\mathbb{Z}_p[[z]][\delta]$ such that \[\mathrm{diag}(1,1/p,\ldots, 1/p^{(n-1)})\Lambda_p(Y_{L_m})\mathrm{diag}(1,p,\ldots, p^{(n-1)})z^{N}\] 
 is the fundamental matrix of solutions of $\delta X=B_{m+1}X$, where $B_{m+1}$ is the companion matrix of $L_{m+1}$. So, by Equation~\eqref{eq_l_m}, the fundamental matrix of solutions of  $\delta X=B_{m+1}X$ can be written as follows \[\mathrm{diag}(1,1/p^{m+1},\ldots, 1/p^{(m+1)(n-1)})\Lambda^{m+1}_p(Y_L)\mathrm{diag}(1,p^{m+1},\ldots, p^{(m+1)(n-1)})z^{N}.\]
 
  By invoking Lemma~\ref{lemm_paso_base} again, we know that the matrix \[H_1=Y_{L_m}(\Lambda_p(Y_{L_m})(z^{p}))^{-1}\mathrm{diag}(1,p,\ldots, p^{n-1})\
\]
belongs to $M_n(\mathbb{Z}_p[[z]])\cap GL_n(\mathcal{W}_{\mathbb{Q}_p}\cap\mathbb{Q}_p[[z]])$, $||H_1||=1$, and $\delta(H_1)=B_mH_1-pH_1B_{m+1}(z^p)$. Thus, 
\begin{equation}\label{eq_equiv_nivel_m}
\delta(H_1(z^{p^m}))=p^mB_m(z^{p^m})H_1(z^{p^m})-H_1(z^{p^m})p^{m+1}B_{m+1}(z^{p^{m+1}}).
\end{equation}
We put $H_{m+1}=H_{m}(z)H_1(z^{p^m})$. Then $H_{m+1}\in M_n(\mathbb{Z}_p[[z]])\cap GL_n(\mathcal{W}_{\mathbb{Q}_p}\cap\mathbb{Q}_p[[z]])$ and \[H_{m+1}=Y_L(z)((\Lambda^{m+1}_pY_L)(z^{p^{m+1}}))^{-1}\mathrm{diag}(1,p^{m+1},\ldots, p^{(m+1)(n-1)}).\]
Furthermore, from Equations~\eqref{eq_equiv} and \eqref{eq_equiv_nivel_m}, we obtain $$\delta(H_{m+1})=AH_{m+1}-p^{m+1}H_{m+1}B_{m+1}(z^{p^{m+1}}).$$
Finally, $||H_{m+1}||=1$ given that $||H_{1}||=1=||H_{m}||$ and \[H_{m+1}(0)=\mathrm{diag}(1,p^{m+1},\ldots, p^{(m+1)(n-1)}).\]
Therefore, the conditions (a) and (b) also hold true for $m+1$. Finally, by (c') of Proposition~\ref{lemm_paso_base}, we obtain $\bm{r}(L_{m+1})\geq1$. For this reason the condition (c) also holds true for $m+1$.  
\end{proof}

\section{Proof of Theorem \ref{theo_generic_integral}}\label{sec_p_integral}

Let us write $y_0(z)=\sum_{j\geq0}f_jz^j$. In order to prove that $y_0(z)\in 1+z\mathbb{Z}_p[[z]]$, it is sufficient to show that, for all integers $m>0$, $f_0, f_1,\ldots, f_{p^m-1}\in\mathbb{Z}_p$. Let $m>0$ be an integer and let $A$ be the companion matrix of $L$. By hypotheses, we know that $\bm{r}(L)\geq1$. Therefore, by (b) of Proposition~\ref{prop_ant}, there exist $B_m\in M_n(\mathbb{Z}_p[[z]])$ and $H_m\in M_n(\mathbb{Z}_{p}[[z]])\cap GL_n(\mathcal{W}_{\mathbb{Q}_p}\cap\mathbb{Q}_p[[z]])$ such that  $||H_m||=1$ and 
\begin{equation}\label{eq_equiv_m}
\delta(H_m)=AH_m-p^mH_mB_m(z^{p^m}).
\end{equation}
We set $\widetilde{y}_0=\Lambda_p^{m}(y_0(z))$. Thanks to (a) of Proposition~\ref{prop_ant}, the vector $(\widetilde{y}_0,\delta\widetilde{y}_0,\ldots, \delta^{n-1}\widetilde{y}_0)$ is solution of the of the system $\delta\vec{y}=B_m\vec{y}$ Hence, it follows from Equation~\eqref{eq_equiv_m} that
 \[H_m\begin{pmatrix}
\widetilde{y}_0(z^{p^m}) \\
(\delta\widetilde{y}_0)(z^{p^m})  \\
\vdots \\
(\delta^{n-1}\widetilde{y}_0)(z^{p^m})  \\
\end{pmatrix}
\]

 is a solution of the system $\delta\vec{y}=A\vec{y}$.  If we put $H_m=(h_{i,j}(z))_{1\leq i,j\leq n}$ then $$h_{1,1}(z)\widetilde{y}_0(z^{p^m})+h_{1,2}(z)(\delta\widetilde{y}_0)(z^{p^m})+\cdots+h_{1,n}(z)(\delta^{n-1}\widetilde{y}_0)(z^{p^m})$$
is a solution of $L$ because $A$ is the companion matrix of $L$. Furthermore, this solution belongs to $\mathbb{Q}_p[[z]]$ given that $H_m\in M_n(\mathbb{Z}_p[[z]])$ and $y_0(z)\in\mathbb{Q}_p[[z]]$. Thus, according to Remark~\ref{rem_sol}, there exists $c\in\mathbb{Q}_p$ such that 
\begin{equation}\label{eq_eigenvalue}
h_{1,1}(z)\widetilde{y}_0(z^{p^m})+h_{1,2}(z)(\delta\widetilde{y}_0)(z^{p^m})+\cdots+h_{1,n}(z)(\delta^{n-1}\widetilde{y}_0)(z^{p^m})=cy_0(z).
\end{equation}
As $y_0(0)=1$ and, for all integers $j>0$,  $(\delta^{j}\widetilde{y}_0)(z^{p^m})(0)=0$ then $h_{1,1}(0)=c$. But, by (b) of Proposition~\ref{prop_ant}, we conclude that $h_{1,1}(0)=1$. So that, $c=1$. By using (b) of Proposition~\ref{prop_ant} again, we infer that, for all $j\in\{2,\ldots, n\}$, $h_{1,j}(0)=0$ and it is clear that, for all integers $j>0$, $(\delta^{j}\widetilde{y}_0)(z^{p^m})\in z^{p^{m}}\mathbb{Q}_p[[z]]$. Hence, from Equation~\eqref{eq_eigenvalue}, we obtain 
\begin{equation}\label{eq_reduction}
h_{1,1}(z)(\Lambda^m_p(y_0))(z^{p^m})\equiv y_0(z)\bmod z^{p^{m}+1}\mathbb{Q}_p[[z]].
\end{equation}
Let us write $h_{1,1}(z)=\sum_{j\geq0}h_jz^j$. Thus, by Equation~\eqref{eq_reduction}, we conclude that, for every integer $k\in\{1,\ldots, p^{m}-1\}$, $h_k=f_k$. But $h_{1,1}(z)\in\mathbb{Z}_p[[z]]$ since $H_m\in M_n(\mathbb{Z}_p[[z]])$. Consequently, for every integer $k\in\{1,\ldots, p^{m}-1\}$, $f_k\in\mathbb{Z}_p.$
$\hfill\square$

We end this section by proving the following result.

\begin{coro}\label{coro_picard_fuchs}
Let $L$ be an irreducible Picard-Fuchs equation with coefficients in $\mathbb{Q}(z)$ and $y_0(z)\in 1+z\mathbb{Q}[[z]]$ solution of $L$.  If $L$ is MUM type then, for almost every prime number $p$, $y_0(z)\in 1+z\mathbb{Z}_p[[z]]$.
\end{coro}
In order to prove this corollary, we need the following result.
\begin{prop}\label{prop_frob_p_integral}
Let $L$ be an irreducible MUM differential operator with coefficients in $\mathbb{Q}(z)$. If $L$ has a Frobenius structure for almost every prime number $p$ then $L$ has a $p$\nobreakdash-integral Frobenius structure, for almost every prime number $p$. Moreover, if $n$ is the order of $L$ then, for almost every prime number $p$, there is $\Phi_p=(\phi_{i,j})_{1\leq i,j\leq n}\in M_n(\mathbb{Z}_p[[z]])\cap GL_n(E_p)$ such that $\Phi_p$ is a $p$\nobreakdash-integral Frobenius matrix for $L$ and $||\Phi_p||=1$. 
\end{prop}

In order to prove this proposition, we recall that Ax-Sen-Tate's Theorem  ensures that if $\xi\in\mathbb{C}_p$ and $\sigma(\xi)=\xi$ for any $\sigma\in \mathrm{Gal}(\mathbb{C}_p/\mathbb{Q}_p)$ then $\xi\in\mathbb{Q}_p$.

\begin{proof}[Proof of Proposition~\ref{prop_frob_p_integral}]
Let $\mathcal{P}$ be the set of prime numbers. By assumption, there is a set $\mathcal{S}_1$ of prime numbers such that $\mathcal{P}\setminus\mathcal{S}_1$ is finite and, for all $p\in\mathcal{S}_1$, $L$ has a Frobenius structure for $p$. Thus, according to  Propositions~4.1.2, 4.6.4, and 4.7.2 of \cite{C83}, for all $p\in\mathcal{S}_1$, we have
 
(a) the radius of convergence of $L$ is greater than or equal to 1.

 Thus, following Proposition 5.1 and Theorem 6.1 of ~\cite[Chap. III]{Dworkgfunciones}, the singularities of $L$ are all regular and the exponents of $L$ are rational numbers. Thus, there is a set $\mathcal{S}_2$ of prime numbers such that $\mathcal{P}\setminus\mathcal{S}_2$ is finite and

(b) if $\alpha$, $\beta$ are two different singularities of $L$ then $|\alpha-\beta|_p=1$ for all $p\in\mathcal{S}_2$,

(c) if $\gamma$ is an exponent of $L$ then $|\gamma|_p=1$ for all $p\in\mathcal{S}_2$.

We set $\mathcal{S}=\mathcal{S}_1\cap\mathcal{S}_2$. Then $\mathcal{P}\setminus\mathcal{S}$ is finite. We are going to see that, for all $p\in\mathcal{S}$, $L$ has a $p$\nobreakdash-integral Frobenius structure. Let $p$ be in $\mathcal{S}$. Thus, there exists $G_p\in GL_n(E_{p})$ such that $\delta G_p=AG_p-pG_pA(z^{p})$. Let us write $G_p=(g_{i,j})_{1\leq i,j\leq n}$ and suppose that $||G_p||=|g_{k,l}|_{\mathcal{G}}$. Since $g_{k,l}$ is a nonzero element of the field $E_{p}$ and $E_p\subset\mathcal{W}_p$, we get that $g_{k,l}$ is a unit element of $\mathcal{W}_p$. So, by (1) of Remark~\ref{rema_unit},  there is $c\in\mathbb{C}_p$ such that $c$ is a coefficient of $g_{k,l}$ and $|g_{k,l}|_{\mathcal{G}}=|c|$. Note that $c$ is not zero because $g_{k,l}$ is not zero. We put $\Phi_p=\frac{1}{c}G_p$. So $\delta\Phi_p=A\Phi_p-p\Phi_pA(z^{p})$, $||\Phi_p||=1$, and 1 is a coefficient of  $\frac{1}{c}g_{k,l}$. Furthermore, it is clear that $\Phi_p\in GL_n(E_p)$. Let $\sigma$ be in $\mathrm{Gal}(\mathbb{C}_p/\mathbb{Q}_p)$. So, $\sigma$ naturally extends to $\mathcal{W}_{p}$ as endomorphism of rings, given by $\sigma(\sum_{n\in\mathbb{Z}}a_nz^n)=\sum_{n\in\mathbb{Z}}\sigma(a_n)z^n$. Further, $\sigma(E_{p})=E_{p}$ and $\delta\circ\sigma=\sigma\circ\delta$. For a matrix $T\in M_n(E_{p})$, the matrix $T^{\sigma}$ is the matrix obtained after applying $\sigma$ to each entry of $T$. Since $\delta\Phi_p=A\Phi_p-p\Phi_pA(z^{p})$ and $A\in M_n(\mathbb{Q}(z))$, we have $\delta(\Phi_p^{\sigma})=A\Phi_p^{\sigma}-p\Phi_p^{\sigma}A(z^{p})$.  As $L$ is irreducible and the conditions (a), (b), and (c) are satisfied because $p\in\mathcal{S}$, we can apply \cite[Lemma]{Dwork89} and therefore,  $\Phi_p^{\sigma}=d\Phi_p$ for some $d\in\mathbb{C}_p$. But $1$ is a coefficient of $\frac{1}{c}g_{k,l}$ and thus $1=\sigma(1)=d$. So, $\Phi_p^{\sigma}=\Phi_p$. Since $\sigma$ is an arbitrary element of $\mathrm{Gal}(\mathbb{C}_p/\mathbb{Q}_p)$, by Ax-Sen-Tate's Theorem, we conclude that $\Phi_p$ is a matrix with coefficients in $\mathcal{W}_{\mathbb{Q}_p}$. But we know that $||\Phi_p||=1$ and consequently, $\Phi_p\in M_n(\mathcal{W}_{\mathbb{Z}_p})$. We also have $\det(\Phi_p)\neq0$ because $\det(G_p)\neq0$ and $c\neq0$.

Now, we are going to see that $\Phi_p\in M_n(\mathbb{Z}_p[[z]])$. Let $Y_Lz^{N}$ be the fundamental matrix of solutions of $\delta X= AX$, where $A$ is the companion matrix of $L$ and $N=A(0)$. Then, by Remark~\ref{rem_det}, we conclude that $\Phi_p=Y_LCY_L(z^{p})^{-1}\in M_n(\mathbb{C}_p[[z]])$, where $C\in M_n(\mathbb{C}_p)$. So, $\Phi_p\in M_n(\mathbb{C}_p[[z]])\cap M_n(\mathcal{W}_{\mathbb{Z}_p})$. Therefore, $\Phi_p\in M_n(\mathbb{Z}_p[[z]])$ and we have already seen that $\Phi_p\in GL_n(E_p)$.
\end{proof}

\begin{proof}[Proof of corollary~\ref{coro_picard_fuchs}]
Since $L$ is a Picard-Fuchs equations, it follows from  \cite[Theorem 22.2.1]{Kedlaya} that $L$ has a strong Frobenius structure for almost every prime number $p$. Thus, according to Proposition~\ref{prop_frob_p_integral}, for almost every prime number $p$, $L$  has a $p$-integral Frobenius structure. Then, by Proposition~\ref{prop_p_integral_frob} and Theorem~\ref{theo_generic_integral}, we conclude that, for almost every primer number $p$, $y_0(z)\in 1+z\mathbb{Z}_p[[z]]$.
\end{proof}

\section{Some $p$-integrality properties}\label{sec_proof_mirror_maps}
In order to prove (1) and (2) of Theorem~\ref{theo_integral_mirror}, we need some additional results that deal mainly with the $p$\nobreakdash-integrality of some power series with coefficients in $\mathbb{Q}_p$. In particular, in Theorem~\ref{prop_arith} we prove that $\exp(y_1/y_0)^p\in z\mathbb{Z}_p[[z]]$.

\begin{lemm}\label{prop_p_int}
Let $\mathfrak{A}:=a_0+\frac{a_1}{p}\delta+\cdots+\frac{a_{n-1}}{p^{n-1}}\delta^{n-1}$ with $a_0(z),\ldots, a_{n-1}(z)\in\mathbb{Z}_p[[z]]$ and let $u\in 1+z\mathbb{Q}_p[[z]]$. If $\mathfrak{A}(u(z^p))=u$ then $u\in 1+z\mathbb{Z}_p[[z]]$.
\end{lemm}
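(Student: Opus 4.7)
The plan is to expand the functional equation $u = \mathcal{A}(u(z^p))$ coefficient by coefficient in $z$ and then run a strong induction on the coefficient index to show that every coefficient of $u$ lies in $\mathbb{Z}_p$.

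The decisive first step is to compute $\delta^i(u(z^p))$. Writing $u(z) = 1 + \sum_{k \geq 1} u_k z^k$ with $u_k \in \mathbb{Q}_p$, the chain rule gives
$$\delta^i(u(z^p)) = p^i \sum_{k \geq 0} u_k k^i z^{pk},$$
so the factor $p^i$ exactly cancels the denominator $1/p^i$ present in the definition of $\mathcal{A}$, and therefore
$$\mathcal{A}(u(z^p)) = \sum_{i=0}^{n-1} a_i(z) \sum_{k \geq 0} u_k k^i z^{pk}.$$
In particular, the only potential source of denominators on the right-hand side now sits inside the coefficients $u_k$ themselves, since the $a_i$ are assumed to have coefficients in $\mathbb{Z}_p$.

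Next, writing $a_i(z) = \sum_{j \geq 0} a_{i,j} z^j$ with $a_{i,j} \in \mathbb{Z}_p$ and extracting the coefficient of $z^N$ from the identity $u = \mathcal{A}(u(z^p))$ yields
$$u_N = \sum_{i=0}^{n-1} \sum_{\substack{j,\,k \geq 0 \\ j + pk = N}} a_{i,j}\, u_k\, k^i.$$
In every term $pk \leq N$, hence $k \leq N/p < N$ as soon as $N \geq 1$. Starting from $u_0 = 1 \in \mathbb{Z}_p$ and using the convention $0^0 = 1$ to handle the $i=0$, $k=0$ contribution, a strong induction on $N$ immediately gives $u_N \in \mathbb{Z}_p$ for every $N \geq 0$, which is exactly the desired conclusion $u \in 1 + z\mathbb{Z}_p[[z]]$.

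The only substantive point is the $p$-adic cancellation $\delta^i((\cdot)(z^p)) = p^i \cdot (\text{an integral operator when the input coefficients are integral})$; once this is noticed, the remainder of the proof is just tracking indices, and I do not foresee any genuine obstacle.
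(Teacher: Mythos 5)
Your proof is correct, and it takes a genuinely more direct route than the paper's. The key cancellation you isolate — $\delta^i\bigl(f(z^p)\bigr)=p^i(\delta^i f)(z^p)$, so that $\mathcal{A}(f(z^p))=\sum_{i}a_i\,(\delta^i f)(z^p)$ has $\mathbb{Z}_p$-coefficients whenever $f$ does — is exactly the cancellation the paper relies on, but the two arguments exploit it differently. The paper runs a fixed-point iteration: it sets $\omega_0=1$, $\omega_{m+1}=\mathcal{A}(\omega_m(z^p))$, observes that each iterate lies in $\mathbb{Z}_p[[z]]$, and shows by induction that $\omega_m\equiv u \bmod z^{p^m}$, so the integral iterates converge $z$-adically to $u$. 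You instead unfold the same fixed-point equation coefficientwise: from $\frac{a_i}{p^i}\delta^i(u(z^p))=a_i\sum_{k\geq0}u_k k^i z^{pk}$ you get $u_N=\sum_{i=0}^{n-1}\sum_{j+pk=N}a_{i,j}u_k k^i$, where every $k$ occurring satisfies $k\le N/p<N$ for $N\geq1$, and strong induction on $N$ finishes. Your version needs no auxiliary sequence and no congruence bookkeeping modulo $z^{p^m}$, and it makes explicit the contraction structure (the $N$-th coefficient of $u$ depends only on coefficients of index at most $N/p$) that is precisely what makes the paper's iteration converge; the paper's formulation, on the other hand, is the one that generalizes naturally to the way such operators are iterated elsewhere in the argument (Proposition~\ref{prop_ant}). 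The small points needing care — the finiteness of each sum over $j+pk=N$, and the $i=0$, $k=0$ term (your $0^0=1$ convention, i.e.\ the $\delta^0$ term is simply $a_0\,u(z^p)$) — are handled correctly in your write-up.
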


\begin{proof}
We set $\bm\omega_0=1$ and $\bm\omega_{i+1}=\mathfrak{A}(\bm\omega_i(z^p))$ for $i\geq0$. Thus, for all integers $m\geq1$, $\bm\omega_m\in 1+z\mathbb{Z}_p[[z]]$ because, for all integers $j\geq$1, $\delta^j(\bm\omega_i(z^p))=p^j(\delta^j\bm\omega_i)(z^p)$.
 We are going to see that, for all integers $m\geq0$, $\bm\omega_m\equiv u\bmod z^{p^{m}}\mathbb{Z}_p[[z]]$. As $u(0)=1$ then $\bm\omega_0\equiv u\bmod z\mathbb{Z}_p[[z]]$. Now, we suppose that $\bm\omega_m\equiv u\bmod z^{p^{m}}\mathbb{Z}_p[[z]]$ for some integer $m\geq0$. Then, $\bm\omega_m(z^p)\equiv u(z^p)\bmod z^{p^{m+1}}\mathbb{Z}_p[[z]]$.
Further, we have $$\bm\omega_{m+1}-u=\mathfrak{A}(\bm\omega_m(z^{p}))-\mathfrak{A}(u(z^p))=\sum_{i=0}^{n-1}a_i\left[(\delta^i\bm\omega_m)(z^p)-(\delta^iu)(z^p)\right].$$
Since $\bm\omega_m(z^p)-u(z^p)\in z^{p^{m+1}}\mathbb{Z}_p[[z]]$, for all integers $i\geq0$, we have \[\delta^i(\bm\omega_m(z^p)-u(z^p))\in p^{(m+1)i}z^{p^{m+1}}\mathbb{Z}_p[[z]].\] Nevertheless, $\delta^i(\bm\omega_m(z^p))=p^i(\delta^i(\bm\omega_m))(z^p)$ and similarly, $\delta^i(u(z^p))=p^i(\delta^i(u))(z^p)$. Thus, $$p^i[(\delta^i\bm\omega_m)(z^p)-(\delta^iu)(z^p)]=\delta^i[\bm\omega_m(z^p)-u(z^p)]\in p^{(m+1)i}z^{p^{m+1}}\mathbb{Z}_p[[z]].$$
Whence, $$(\delta^i\bm\omega_m)(z^p)-(\delta^iu)(z^p)\in p^{mi}z^{p^{m+1}}\mathbb{Z}_p[[z]]\subset z^{p^{m+1}}\mathbb{Z}_p[[z]].$$
Therefore, $\bm\omega_{m+1}-u\in z^{p^{m+1}}\mathbb{Z}_p[[z]]$. Then, we conclude that, for all integers $m\geq0$, $\bm\omega_m\equiv u\bmod z^{p^{m}}\mathbb{Z}_p[[z]]$. Consequently, $u\in 1+z\mathbb{Z}_p[[z]]$ because  for all integers $m\geq1$, $\bm\omega_m\in 1+z\mathbb{Z}_p[[z]]$.
\end{proof}
 
 \begin{lemm}\label{prop_eig}
 Let $L$ and $D$ be  MUM differential operators in $\mathbb{Q}_p[[z]][\delta]$ of order $n$ and let $A$ and $B$ be the respective companion matrices of $L$ and $D$. Let $y_0=\mathfrak{f}(z)$, $y_1=\mathfrak{f}(z)\log z+\mathfrak{g}(z)$ be solutions of $L$ and let $\widetilde{y}_0=\mathfrak{f}_1(z)$, $\widetilde{y}_1=\mathfrak{f}_1(z)\log z+\mathfrak{g}_1(z)$ be solutions of $D$, with $\mathfrak{f}(z), \mathfrak{f}_1(z)\in1+z\mathbb{Q}_p[[z]]$ and $\mathfrak{g}(z),\mathfrak{g}_1(z)\in z\mathbb{Q}_p[[z]]$. Suppose that there is $H=(h_{i,j}(z))_{1\leq i,j\leq n}\in M_n(\mathbb{Q}_p[[z]])$ such that $\delta H=AH-pHB(z^p)$. Then, 
$$\widetilde{\omega}(z^p)\alpha_0y_0+(\delta\widetilde{\omega})(z^p)r_2+\cdots+(\delta^{n-1}\widetilde{\omega})(z^p)r_{n}y_0=y_0(p\alpha_0\omega+\alpha_1),$$
where $\widetilde{\omega}=\widetilde{y}_1/\widetilde{y}_0$, $\omega={y}_1/{y}_0$, $\alpha_0=h_{1,1}(0)$, $\alpha_1=h_{1,2}(0)$, and for $2\leq j\leq n$, $r_j=\sum_{i=j}^n\binom{i-1}{i-j}h_{1,i}(z)(\delta^{i-j}\widetilde{y}_0)(z^p)$.
 \end{lemm}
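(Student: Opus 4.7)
The plan is to use the relation $\delta H = AH - pHB(z^{p})$ to transport solutions of $D$ (precomposed with the Frobenius substitution $z\mapsto z^{p}$) to solutions of $L$ via the first row of $H$, and then to read off the identity by splitting both sides into their $\log z$-linear and uniform parts.

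\textbf{The transfer operator and its value on $\widetilde y_{0}$.} Because $B$ is the companion matrix of $D$, every solution $u$ of $D$ gives a solution $\vec{u}=(u,\delta u,\ldots,\delta^{n-1}u)^{t}$ of $\delta\vec{u}=B\vec{u}$, and hence the vector $\vec{v}=\bigl((\delta^{i-1}u)(z^{p})\bigr)_{1\le i\le n}^{t}$ solves $\delta\vec{v}=pB(z^{p})\vec{v}$. The hypothesis on $H$ then gives $\delta(H\vec{v})=A(H\vec{v})$, so the first entry
\[\mathfrak{S}(u):=\sum_{i=1}^{n}h_{1,i}\,(\delta^{i-1}u)(z^{p})\]
is annihilated by $L$. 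I extend $\mathfrak{S}$ $\mathbb{Q}_{p}$-linearly to the $\log z$-augmented solution space of $D$, with the convention $\log(z^{p})=p\log z$. Since $\widetilde y_{0}\in 1+z\mathbb{Q}_{p}[[z]]$, one has $(\delta^{i-1}\widetilde y_{0})(z^{p})\in z^{p}\mathbb{Q}_{p}[[z]]$ for all $i\ge 2$, so $\mathfrak{S}(\widetilde y_{0})\in\mathbb{Q}_{p}[[z]]$ with $\mathfrak{S}(\widetilde y_{0})(0)=h_{1,1}(0)=\alpha_{0}$. By Remark~\ref{rem_sol}(2) the uniform solutions of $L$ are scalar multiples of $y_{0}$, hence $\mathfrak{S}(\widetilde y_{0})=\alpha_{0}y_{0}$.

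\textbf{Two expressions for $\mathfrak{S}(\widetilde y_{1})$.} Writing $\widetilde y_{1}=\widetilde\omega\,\widetilde y_{0}$ and using Leibniz,
\[(\delta^{i-1}\widetilde y_{1})(z^{p})=\sum_{j=0}^{i-1}\binom{i-1}{j}(\delta^{j}\widetilde\omega)(z^{p})(\delta^{i-1-j}\widetilde y_{0})(z^{p}).\]
Multiplying by $h_{1,i}$, summing over $i$, and exchanging the order, the coefficient of $\widetilde\omega(z^{p})$ is $\mathfrak{S}(\widetilde y_{0})=\alpha_{0}y_{0}$, while for $1\le j\le n-1$ the coefficient of $(\delta^{j}\widetilde\omega)(z^{p})$ collapses to $\sum_{i=j+1}^{n}\binom{i-1}{j}h_{1,i}(\delta^{i-1-j}\widetilde y_{0})(z^{p})=r_{j+1}$, using $\binom{i-1}{j}=\binom{i-1}{i-1-j}$ and the definition of $r_{j+1}$. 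On the other hand, $\mathfrak{S}(\widetilde y_{1})$ is $\log z$-linear with coefficient of $\log z$ equal to $p\,\mathfrak{S}(\widetilde y_{0})=p\alpha_{0}y_{0}$; since $\log z$ is transcendental over $\mathbb{Q}_{p}[[z]]$ and $\mathfrak{S}(\widetilde y_{1})$ solves $L$, it must equal $p\alpha_{0}y_{1}+c_{0}y_{0}$ for some $c_{0}\in\mathbb{Q}_{p}$. By the Leibniz expansion of $\delta^{i-1}(\widetilde y_{0}\log z)$, the uniform part of $\mathfrak{S}(\widetilde y_{1})$ equals $\sum_{i=2}^{n}(i-1)h_{1,i}(\delta^{i-2}\widetilde y_{0})(z^{p})+\sum_{i=1}^{n}h_{1,i}(\delta^{i-1}\mathfrak{g}_{1})(z^{p})$, and at $z=0$ only the $i=2$ summand of the first sum survives (since $(\delta^{j}\widetilde y_{0})(0)=0$ for $j\ge 1$ and $\mathfrak{g}_{1}\in z\mathbb{Q}_{p}[[z]]$), giving $c_{0}=h_{1,2}(0)=\alpha_{1}$. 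Hence $\mathfrak{S}(\widetilde y_{1})=p\alpha_{0}y_{1}+\alpha_{1}y_{0}=y_{0}(p\alpha_{0}\omega+\alpha_{1})$, and equating with the Leibniz expression yields the stated identity.

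\textbf{Main obstacle.} Conceptually the proof is the one-line observation that the first row of $H$ transports solutions of $D$ (Frobenius-twisted) to solutions of $L$, combined with Leibniz bookkeeping; no ingredient beyond the elementary structure theory of MUM operators from Remark~\ref{rem_sol} is needed. The only step requiring real care is the identification $c_{0}=\alpha_{1}$, where one must correctly track the $\log z$ under the substitution $z\mapsto z^{p}$ and verify that at $z=0$ the only surviving contribution to the uniform part of $\mathfrak{S}(\widetilde y_{1})$ comes from the $i=2$ Leibniz term $h_{1,2}(0)\cdot 1\cdot\widetilde y_{0}(0)$.
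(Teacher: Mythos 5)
Your proof is correct and follows essentially the same route as the paper: you use the first row of $H$ to transport Frobenius-twisted solutions of $D$ to solutions of $L$, identify the constants $p\alpha_0$ and $\alpha_1$ by comparing the $\log z$-coefficient and evaluating the uniform part at $z=0$, and obtain the left-hand side by the Leibniz expansion of $\delta^{i-1}(\widetilde\omega\,\widetilde y_0)$. The bookkeeping (including $\log(z^p)=p\log z$ and the identification of the $r_j$) matches the paper's argument.
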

 
 \begin{proof}
 Since $\widetilde{y}_0$ is a solution of $D$, it follows from the equality $\delta H=AH-pHB(z^p)$ that $\sum_{i=1}^nh_{1,i}(z)(\delta^{i-1}\widetilde{y}_0)(z^p)$ is a solution of $L$. It is clear that  $\sum_{i=1}^nh_{1,i}(z)(\delta^{i-1}\widetilde{y}_0)(z^p)\in\mathbb{Q}_p[[z]]$. So, according to Remark~\ref{rem_sol}, there is $c\in\mathbb{Q}_p$ such that $\sum_{i=1}^nh_{1,i}(z)(\delta^{i-1}\widetilde{y}_0)(z^p)=c\mathfrak{f}(z)$. Thereby, $c=h_{1,1}(0)=\alpha_0$. In a similar way, since $\widetilde{y}_1$ is a solution of $D$, it follows from the equality $\delta H=AH-pHB(z^p)$ that $\sum_{i=1}^nh_{1,i}(z)(\delta^{i-1}\widetilde{y}_1)(z^p)$ is a solution of $L$. As $\sum_{i=1}^nh_{1,i}(z)(\delta^{i-1}\widetilde{y}_1)(z^p)\in \mathbb{Q}_p[[z]]+\mathbb{Q}_p[[z]]\log z$, it follows from Remark~\ref{rem_sol} that there are $c_0, c_1\in\mathbb{Q}_p$ such that $\sum_{i=1}^nh_{1,i}(z)(\delta^{i-1}\widetilde{y}_1)(z^p)=c_0y_0+c_1y_1$. Now, for all integers $m\geq1$, $$\delta^{m}\widetilde{y}_1=\delta^{m}(\mathfrak{f}_1(z))\log z+m\delta^{m-1}\mathfrak{f}_1(z)+\delta^m\mathfrak{g}_1(z).$$
So,
\begin{align*}
\sum_{i=1}^nh_{1,i}(z)(\delta^{i-1}\widetilde{y}_1)(z^p)&=\left[\sum_{i=1}^nph_{1,i}(z)(\delta^{i-1}\mathfrak{f}_1)(z^p)\right]\log z+\sum_{i=0}^{n-1}h_{1,i+1}(z)(i\delta^{i-1}\mathfrak{f}_1+\delta^{i}\mathfrak{g}_1)(z^p)\\
&=c_0y_0+c_1y_1\\
&=c_1\mathfrak{f}(z)\log z+c_0\mathfrak{f}(z)+c_1\mathfrak{g}(z).
\end{align*}
Thus,  $$\sum_{i=1}^nph_{1,i}(z)(\delta^{i-1}\mathfrak{f}_1)(z^p)=c_1\mathfrak{f}(z)\quad\text{ and }\quad \sum_{i=0}^{n-1}h_{1,i+1}(z)(i\delta^{i-1}\mathfrak{f}_1+\delta^{i}\mathfrak{g}_1)(z^p)=c_0\mathfrak{f}(z)+c_1\mathfrak{g}(z).$$ From the right equality, we have $c_0=h_{1,2}(0)$ because $\mathfrak{g}(0)=0=\mathfrak{g}_1(0)$ and $\mathfrak{f}(0)=1=\mathfrak{f}_1(0)$. So, $c_0=\alpha_1$. Further, we have already seen that $\sum_{i=1}^nh_{1,i}(z)(\delta^{i-1}\mathfrak{f}_1)(z^p)=\alpha_0\mathfrak{f}$. Thus $c_1=p\alpha_0$. 

We put $\widetilde{\omega}=\frac{\widetilde{y}_1}{\widetilde{y}_0}$. Then $\widetilde{\omega}\widetilde{y}_0=\widetilde{y}_1$ and $$\sum_{i=1}^nh_{1,i}(z)(\delta^{i-1}(\widetilde{\omega}\widetilde{y}_0))(z^p)=\alpha_1y_0+p\alpha_0y_1.$$

 Given that, for all integers $m\geq1$, $\delta^m(\widetilde{\omega}\widetilde{y}_0)=\sum_{i=0}^m\binom{m}{i}\delta^{m-i}\widetilde{\omega}\cdot\delta^{i}\widetilde{y}_0$, we obtain 
 \begin{align*}
&\sum_{i=1}^nh_{1,i}(z)(\delta^{i-1}(\widetilde{\omega}\widetilde{y}_0))(z^p)=\widetilde{\omega}(z^p)\left[\sum_{i=1}^nh_{1,i}(z)(\delta^{i-1}\widetilde{y}_0)(z^p)\right]+\\
&(\delta\widetilde{\omega})(z^p)\left[\sum_{i=2}^n\binom{i-1}{i-2}h_{1,i}(z)(\delta^{i-1}\widetilde{y}_0)(z^p)\right]+(\delta^2\widetilde{\omega})(z^p)\left[\sum_{i=3}^n\binom{i-1}{i-3}h_{1,i}(z)(\delta^{i-2}\widetilde{y}_0)(z^p)\right]+\\
&\cdots+(\delta^{n-1}\widetilde{\omega})(z^p)h_{1,n}(z)(\widetilde{y}_0)(z^p).
 \end{align*}
 We know that $$\sum_{i=1}^nh_{1,i}(z)(\delta^{i-1}\widetilde{y}_0)(z^p)=\alpha_0y_0$$ and, for every $j\in\{2,\ldots,n\}$, we set $r_j=\sum_{i=j}^n\binom{i-1}{i-j}h_{1,i}(z)(\delta^{i-j}\widetilde{y}_0)(z^p).$ So, 
 \begin{align*}
 \sum_{i=1}^nh_{1,i}(\delta^{i-1}(\widetilde{\omega}\widetilde{y}_0))(z^p)=\widetilde{\omega}(z^p)\alpha_0y_0+(\delta\widetilde{\omega})(z^p)r_2+\cdots+(\delta^{n-1}\widetilde{\omega})(z^p)r_{n}.
 \end{align*}
Consequently,
 $$\widetilde{\omega}(z^p)\alpha_0y_0+(\delta\widetilde{\omega})(z^p)r_2+\cdots+(\delta^{n-1}\widetilde{\omega})(z^p)r_{n}=\alpha_1y_0+p\alpha_0y_1=y_0(p\alpha_0\omega+\alpha_1).$$
 This finishes the proof. \end{proof}
 
Theorem~\ref{theo_generic_integral} and Lemma~\ref{lemm_paso_base} are essential in the proof of the following two propositions.
 
 \begin{prop}\label{prop_lambda}
 Let $L$ be a MUM differential operator of order $n$ with coefficients in $\mathbb{Z}_p[[z]]$. Let $y_0=\mathfrak{f}(z)$ and  $y_1=\mathfrak{f}(z)\log z+\mathfrak{g}$ be solutions of $L$ such that $\mathfrak{f}(z)\in 1+z\mathbb{Q}_p[z]]$ and $\mathfrak{g}(z)\in z\mathbb{Q}_p[[z]]$. Suppose that $L$ has a $p$\nobreakdash-integral Frobenius structure given by the matrix $\Phi=(\phi_{i,j}(z))_{1\leq i,j\leq n}$. If $|\phi_{1,1}(0)|=1$ then $\delta\widetilde{\omega}\in 1+z\mathbb{Z}_p[[z]]$, where $\widetilde{\omega}=\widetilde{y_1}/\widetilde{y_0}$ with $\widetilde{y}_0=\Lambda_p(\mathfrak{f}(z))$ and $\widetilde{y}_1=\Lambda_p(\mathfrak{f}(z))\log z+p\Lambda_p(\mathfrak{g}(z))$.
 \end{prop}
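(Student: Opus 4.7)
The plan is to combine Lemma~\ref{lemm_paso_base} (which produces the auxiliary operator $L_1$ and the transfer matrix $H_1$) with Lemma~\ref{prop_eig} applied twice---once to $(L,L_1,H_1)$ and once to $(L,L,\Phi)$---and then invoke Lemma~\ref{prop_p_int} in order to deduce the integrality of $u:=\delta\widetilde{\omega}$. The argument has two main ingredients: (i) a functional equation for $u$ extracted from $H_1$, and (ii) the integrality of $v:=\delta\omega$ extracted from $\Phi$.

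First I would note that, since $L\in\mathbb{Z}_p[[z]][\delta]$ carries a $p$-integral Frobenius structure, Proposition~\ref{prop_p_integral_frob} gives $\bm{r}(L)\geq 1$ and Theorem~\ref{theo_generic_integral} then yields $\mathfrak{f}\in 1+z\mathbb{Z}_p[[z]]$, whence $\widetilde{y}_0=\Lambda_p(\mathfrak{f})\in 1+z\mathbb{Z}_p[[z]]$. Lemma~\ref{lemm_paso_base} supplies $L_1\in\mathbb{Z}_p[[z]][\delta]$ and a matrix $H_1\in M_n(\mathbb{Z}_p[[z]])$ with $H_1(0)=diag(1,p,\ldots,p^{n-1})$ and $\delta H_1=AH_1-pH_1B_1(z^p)$; since $H_1=H_0^{-1}diag(1,p,\ldots,p^{n-1})$ with $H_0^{-1}\in GL_n(\mathbb{Z}_p[[z]])$ (Lemma~\ref{lemm_paso_0}), one reads off $(h_1)_{1,j}\in p^{j-1}\mathbb{Z}_p[[z]]$. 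Applying Lemma~\ref{prop_eig} with $D=L_1$, $H=H_1$ and using $(h_1)_{1,1}(0)=1$, $(h_1)_{1,2}(0)=0$ produces $\widetilde{\omega}(z^p)\,y_0+\sum_{j=2}^n(\delta^{j-1}\widetilde{\omega})(z^p)\,s_j=py_0\omega$ with $s_j\in p^{j-1}\mathbb{Z}_p[[z]]$ and $s_j(0)=0$. Dividing by $py_0$, applying $\delta$, and regrouping by powers of $\delta$ acting on $u$, one arrives at
\[
D_0\,u(z^p)+\sum_{k=1}^{n-1}D_k\,(\delta^k u)(z^p)=\delta\omega,
\]
where $D_0\in 1+z\mathbb{Z}_p[[z]]$ and $D_k\in p^k\mathbb{Z}_p[[z]]$ for $k\geq 1$.

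In parallel I would apply Lemma~\ref{prop_eig} with $D=L$, $H=\Phi$ and perform the same differentiation-and-division. Using $|\phi_{1,1}(0)|=1$ together with the relations encoded in $\delta\Phi=A\Phi-p\Phi A(z^p)$, the outcome should be recast as a functional equation $\mathcal{A}(v(z^p))=v$ with $\mathcal{A}$ satisfying the hypotheses of Lemma~\ref{prop_p_int}, yielding $\delta\omega\in 1+z\mathbb{Z}_p[[z]]$. Substituting this into the equation of the previous paragraph and dividing by the unit $D_0$ gives
\[
u(z^p)+\sum_{k=1}^{n-1}(D_k/D_0)\,(\delta^k u)(z^p)\in 1+z\mathbb{Z}_p[[z]],
\]
with $(D_k/D_0)\in p^k\mathbb{Z}_p[[z]]$. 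An induction modelled on the proof of Lemma~\ref{prop_p_int}---compare coefficients of $z^{pM}$: the coefficient of $u_M$ is of the form $1+p\mathbb{Z}_p$, hence a $p$-adic unit, while the remaining contributions are integral by the inductive hypothesis and by the integrality of the right-hand side---then yields $u_M\in\mathbb{Z}_p$ for every $M\geq 0$, i.e.\ $\delta\widetilde{\omega}\in 1+z\mathbb{Z}_p[[z]]$.

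The hardest part will be the recasting of the differentiated Frobenius identity for $\omega$ into the precise shape required by Lemma~\ref{prop_p_int}: the quantities $(1/p)\delta(r_j/(\phi_{1,1}(0)y_0))$ appearing among the coefficients are not manifestly integral, and extracting the necessary $p$-adic cancellations relies on both the matrix identity $\delta\Phi=A\Phi-p\Phi A(z^p)$ and on the normalisation $|\phi_{1,1}(0)|=1$.
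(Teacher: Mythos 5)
Your ingredient (i) is sound: the identity coming from Lemma~\ref{prop_eig} applied to $(L,L_1,H_1)$ does have the coefficient structure you claim (this is exactly the relation the paper uses in Proposition~\ref{prop_delta}), and your final coefficient-comparison induction would close the argument \emph{if} you had $\delta\omega\in 1+z\mathbb{Z}_p[[z]]$. The genuine gap is ingredient (ii), and it is not a technical loose end but the heart of the matter. Applying Lemma~\ref{prop_eig} to $(L,L,\Phi)$ and differentiating gives
\[
p\alpha_0\,v(z^p)+\sum_{j=2}^{n}\Bigl[p\,\tfrac{r_j}{y_0}\,(\delta^{j-1}v)(z^p)+\delta\!\bigl(\tfrac{r_j}{y_0}\bigr)(\delta^{j-2}v)(z^p)\Bigr]=p\alpha_0\,v,
\]
and to put this in the shape required by Lemma~\ref{prop_p_int} you must divide by $p\alpha_0$, which forces you to show $\tfrac{1}{p}\delta(r_j/y_0)\in\mathbb{Z}_p[[z]]$. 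Since $r_j$ is built from the $\phi_{1,i}$ and the hypotheses only give $\phi_{1,i}\in\mathbb{Z}_p[[z]]$ (not $\phi_{1,i}\in p^{i-1}\mathbb{Z}_p[[z]]$, which is the Beukers--Vlasenko situation, a strictly stronger assumption), nothing forces $\delta\phi_{1,i}$ or $\delta(1/y_0)$ to be divisible by $p$, and no cancellation of the kind you hope for can be extracted from $\delta\Phi=A\Phi-p\Phi A(z^p)$ and $|\phi_{1,1}(0)|=1$ alone. Moreover your plan is circular relative to the paper's logic: in the paper, $\delta\omega\in1+z\mathbb{Z}_p[[z]]$ is Proposition~\ref{prop_delta}, whose proof \emph{uses} the present Proposition (via your ingredient (i) run in the opposite direction), so you cannot quote it as an input here.

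The missing idea is to transfer the Frobenius structure to $L_1$ before applying Lemma~\ref{prop_eig}. The paper sets
\[
T=\mathrm{diag}(1,1/p,\ldots,1/p^{n-1})\,\Lambda_p(\Phi)\,Y_L\bigl(\Lambda_p(Y_L)(z^{p})\bigr)^{-1}\mathrm{diag}(1,p,\ldots,p^{n-1})=\Lambda_p(H_1^{-1}\Phi)H_1,
\]
and checks $\delta T=B_1T-pTB_1(z^p)$ using $\delta H_1=AH_1-pH_1B_1(z^p)$, $\delta\Phi=A\Phi-p\Phi A(z^p)$ and $\Lambda_p\circ\delta=p\,\delta\circ\Lambda_p$. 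Because of the right-hand diagonal factor (together with $\|\Lambda_p(\Phi)\|=1$ and $\|Y_L(\Lambda_p(Y_L)(z^p))^{-1}\|=1$), the first row of $T$ satisfies $t_{1,j}\in p^{j-1}\mathbb{Z}_p[[z]]$ for $j\geq2$. Applying Lemma~\ref{prop_eig} with $L_1$ on \emph{both} sides then yields a self-referential relation for $\widetilde\omega$ in which $r_j=ps_j$ and $\alpha_1=p\beta$, so after dividing by $p\widetilde y_0$ and applying $\delta$ no further division by $p$ is needed: all coefficients land in $\mathbb{Z}_p[[z]]$ (using $\alpha_0=\phi_{1,1}(0)\in\mathbb{Z}_p^{*}$ and $\widetilde y_0\in1+z\mathbb{Z}_p[[z]]$ from Theorem~\ref{theo_generic_integral}), and Lemma~\ref{prop_p_int} applies directly to $\bm\omega=\delta\widetilde\omega$. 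Without this construction (or an equivalent source of the extra $p$-divisibility in the first row), your step (ii) does not go through.
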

 
 \begin{proof}
Let $Y_Lz^{N}$ be the fundamental matrix of solutions of $\delta X=AX$, where $A$ is the companion matrix of $L$. Since $L$ has a $p$\nobreakdash-integral Frobenius structure,  according to Proposition~\ref{prop_p_integral_frob}, $\bm{r}(L)\geq1$. As  $L\in\mathbb{Z}_p[[z]][\delta]$ is MUM and $\bm{r}(L)\geq1$ we can apply  Lemma~\ref{lemm_paso_base} and thus, from (a') of Lemma~\ref{lemm_paso_base}, there is a MUM differential operator $L_1\in\mathbb{Z}_p[[z]][\delta]$ of order $n$ such that the fundamental matrix of solutions of $\delta X=B_1X$ is given by  $$\mathrm{diag}(1,1/p,\ldots,1/p^{n-1})\Lambda_p(Y_L)\mathrm{diag}(1,p,\ldots, p^{n-1})z^{N},$$
where $B_1$ is the companion matrix of $L_1$. Therefore, $$\widetilde{y}_0:=\Lambda_p(\mathfrak{f}(z))\quad\text{ and }\quad\widetilde{y}_1:=\Lambda_p(\mathfrak{f}(z))\log z+p\Lambda_p(\mathfrak{g}(z))$$
 are solutions of $L_1$. Let us take $$T=\mathrm{diag}(1,1/p,\ldots,1/p^{n-1})\Lambda_p(\Phi)Y_L(z)(\Lambda_p(Y_L)(z^{p}))^{-1}\mathrm{diag}(1,p,\ldots,p^{n-1}).$$ We are going to see that  $\delta T=B_1T-pTB_1(z^p)$. In fact, from (b') of Lemma~\ref{lemm_paso_base}, we deduce that $\delta(H_1^{-1})=pB_1(z^p)H_1^{-1}-H_1^{-1}A$, where $$H_1=Y_L(z)(\Lambda_p(Y_L)(z^{p}))^{-1}\mathrm{diag}(1,p,p^2,\ldots, p^{n-1}).$$ By hypotheses, we know that $\delta\Phi=A\Phi-p\Phi A(z^p)$. Thus, $$\delta(H_1^{-1}\Phi)=pB_1(z^p)H_1^{-1}\Phi-pH_1^{-1}\Phi A(z^p).$$ As $\Lambda_p\circ\delta=p\delta\circ\Lambda_p$ then, we get $$\delta(\Lambda_p(H_1^{-1}\Phi))=B_1\Lambda_p(H_1^{-1}\Phi)-\Lambda_p(H_1^{-1}\Phi)A.$$
By invoking (b') of Lemma~\ref{lemm_paso_base}  again , we have $\delta(H_1)=AH_1-pH_1B_1(z^p)$. Thus, $$\delta(\Lambda_p(H_1^{-1}\Phi)H_1)=B_1\Lambda_p(H_1^{-1}\Phi)H_1-p\Lambda_p(H_1^{-1}\Phi)H_1B_1(z^p).$$
From Remark~\ref{rem_det}, we know that $\Phi=Y_L\Phi(0)Y_L(z^p)^{-1}$. Whence,  $$\Lambda_p(\Phi)=\Lambda_p(Y_L)\Phi(0)Y_L(z)^{-1}$$ and consequently, $$\Lambda_p(H_1^{-1}\Phi)H_1=T.$$ Let us write $T=(t_{i,j}(z))_{1\leq i,j\leq n}$. We have $T\in M_n(\mathbb{Q}_p[[z]])$ because $Y_L\in GL_n(\mathbb{Q}_p[[z]])$ and, by assumption, $\Phi\in M_n(\mathbb{Z}_p[[z]])$. As  $\delta T=B_1T-pTB_1(z^p)$ then, by Lemma~\ref{prop_eig}, we deduce that  $$\widetilde{\omega}(z^p)\alpha_0\widetilde{y}_0+(\delta\widetilde{\omega})(z^p)r_2+\cdots+(\delta^{n-1}\widetilde{\omega})(z^p)r_{n}=\widetilde{y}_0(p\alpha_0\widetilde{\omega}+\alpha_1),$$
where $\alpha_0=t_{1,1}(0)$, $\alpha_1=t_{1,2}(0)$ and $r_j=\sum_{i=j}^n\binom{i-1}{i-j}t_{1,i}(z)(\delta^{i-j}\widetilde{y}_0)(z^p)$.

Since $|\phi_{1,1}(0)|=1$ and  $||\Phi||=1$, we have $||\Lambda_p(\Phi)||=1$. In addition, by Lemma~\ref{lemm_paso_0}, we know that $Y_L(z)(\Lambda_p(Y_L)(z^{p}))^{-1}\in GL_n(\mathbb{Z}_p[[z]])$. Therefore, $t_{1,2},\ldots, t_{1,n}\in p\mathbb{Z}_p[[z]]$ and consequently, $r_2,\ldots, r_{n}\in p\mathbb{Z}_p[[z]]$ and $\alpha_1=p\beta$ with $\beta\in\mathbb{Z}_p$. So, $r_j=ps_j$ with $s_j\in\mathbb{Z}_p[[z]]$ and we obtain, $$\widetilde{\omega}(z^p)\frac{\alpha_0}{p}+(\delta\widetilde{\omega})(z^p)\frac{s_2}{\widetilde{y}_0}+\cdots+(\delta^{n-1}\widetilde{\omega})(z^p)\frac{s_{n}}{\widetilde{y}_0}=\alpha_0\widetilde{\omega}+\beta.$$
As $\bm{r}(L)\geq1$ then, by Theorem~\ref{theo_generic_integral}, $y_0=\mathfrak{f}(z)\in 1+z\mathbb{Z}_p[[z]]$. Thus, $\widetilde{y_0}\in1+z\mathbb{Z}_p[[z]]$ and, for every $j\in\{2,\ldots,n\}$, $\gamma_j:=s_j/\widetilde{y}_0\in\mathbb{Z}_p[[z]]$. We put $\bm\omega=\delta\widetilde{\omega}$. Then, 
 \begin{equation*}
 \widetilde{\omega}(z^p)\frac{\alpha_0}{p}+\bm\omega(z^p)\gamma_2+\cdots+(\delta^{n-2}\bm\omega)(z^p)\gamma_{n}=\alpha_0\widetilde{\omega}+\beta.
 \end{equation*}
 By applying $\delta$ to this equality we obtain $$\delta(\widetilde{\omega}(z^p))\frac{\alpha_0}{p}+\delta(\bm\omega(z^p)\gamma_2)+\cdots+\delta((\delta^{n-2}\bm\omega)(z^p)\gamma_{n})=\alpha_0\bm\omega.$$
 As $\delta(\widetilde{\omega}(z^p))=p(\delta\widetilde\omega)(z^p)$ then $\delta(\widetilde{\omega}(z^p))\frac{1}{p}=\bm\omega(z^p)$. Therefore, $$\alpha_0\bm\omega(z^p)+\delta(\bm\omega(z^p)\gamma_2)+\cdots+\delta((\delta^{n-2}\bm\omega)(z^p)\gamma_{n})=\alpha_0\bm\omega.$$
 Now, $\delta((\delta^{i}\bm\omega)(z^p)\gamma_{i+2})=p(\delta^{i+1}\bm\omega)(z^p)\gamma_{i+2}+(\delta^{i}\bm\omega)(z^p)\delta(\gamma_{i+2})$. Further, $\alpha_0\in\mathbb{Z}_p^*$ since $\alpha_0=t_{1,1}(0)=\phi_{1,1}(0)$ and by hypotheses, $|\phi_{1,1}(0)|=1$. Thus, there are $a_0,\ldots, a_n\in\mathbb{Z}_p[[z]]$ such that $$a_0\bm\omega(z^p)+a_1(\delta\bm\omega)(z^p)+\cdots+a_{n-1}(\delta^{n-1}\bm\omega)(z^p)=\bm\omega.$$
 Finally, we set $\mathfrak{A}:=a_0+\frac{a_1}{p}\delta+\cdots+\frac{a_{n-1}}{p^{n-1}}\delta^{n-1}$. So, $\mathfrak{A}(\bm\omega(z^p))=\bm\omega$. Therefore, from Lemma~\ref{prop_p_int}, we obtain $\delta\widetilde{\omega}=\bm\omega\in 1+z\mathbb{Z}_p[[z]]$.
 \end{proof}
 
  \begin{prop}\label{prop_delta}
Let $L$ be a MUM differential operator of order $n$ with coefficients in $\mathbb{Z}_p[[z]]$. Let $y_0=\mathfrak{f}(z)$ and  $y_1=\mathfrak{f}(z)\log z+\mathfrak{g}(z)$ be the solutions of $L$ with $\mathfrak{f}(z)\in 1+z\mathbb{Q}_p[z]]$ and $\mathfrak{g}(z)\in z\mathbb{Q}_p[[z]]$. Suppose that $L$ has a $p$\nobreakdash-integral Frobenius structure given by the matrix $\Phi=(\phi_{i,j})_{1\leq i,j\leq n}$. If $|\phi_{1,1}(0)|=1$ then $\delta\omega\in 1+z\mathbb{Z}_p[[z]]$, where $\omega=y_1/y_0$. %and  $\exp(y_1/y_0)=\sum_{n\geq0}\frac{a_n}{n!}z^n$, where $a_n\in\mathbb{Z}_p$ for all $n\geq0$.
 \end{prop}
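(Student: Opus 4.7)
My plan is to apply Lemma \ref{prop_eig} to the intertwiner $H_1$ supplied by Lemma \ref{lemm_paso_base} and then combine the resulting identity with Proposition \ref{prop_lambda}.

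First, since $L$ has a $p$-integral Frobenius structure and $L\in\mathbb{Z}_p[[z]][\delta]$, Proposition \ref{prop_p_integral_frob} gives $\bm{r}(L)\geq 1$, and Theorem \ref{theo_generic_integral} then provides $y_0=\mathfrak{f}\in 1+z\mathbb{Z}_p[[z]]$. Lemma \ref{lemm_paso_base} now furnishes a MUM operator $L_1\in\mathbb{Z}_p[[z]][\delta]$ of order $n$ with companion matrix $B_1$, together with an intertwiner $H_1=(h_{i,j})=H_0^{-1}\,diag(1,p,\ldots,p^{n-1})$ satisfying $\delta H_1=AH_1-pH_1B_1(z^p)$. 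The solutions of $L_1$ are $\widetilde{y}_0=\Lambda_p(\mathfrak{f})$ and $\widetilde{y}_1=\Lambda_p(\mathfrak{f})Logz+p\Lambda_p(\mathfrak{g})$, so $\widetilde\omega=\widetilde{y}_1/\widetilde{y}_0$ is precisely the function appearing in Proposition \ref{prop_lambda}. The crucial structural feature, coming from $H_0^{-1}\in GL_n(\mathbb{Z}_p[[z]])$ (Lemma \ref{lemm_paso_0}) followed by right multiplication by a diagonal matrix, is the column-wise divisibility
\[h_{1,j}\in p^{j-1}\mathbb{Z}_p[[z]],\qquad H_1(0)=diag(1,p,\ldots,p^{n-1}),\]
so in particular $\alpha_0:=h_{1,1}(0)=1$ and $\alpha_1:=h_{1,2}(0)=0$.

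Now I apply Lemma \ref{prop_eig} with $D=L_1$ and $H=H_1$; after dividing through by $y_0\in 1+z\mathbb{Z}_p[[z]]$ the identity becomes
\[\widetilde\omega(z^p)+\sum_{k=2}^n(\delta^{k-1}\widetilde\omega)(z^p)\,\widehat{r}_k=p\omega,\qquad \widehat{r}_k:=r_k/y_0\in p^{k-1}\mathbb{Z}_p[[z]],\]
where the divisibility of $\widehat{r}_k$ follows from $h_{1,i}\in p^{i-1}\mathbb{Z}_p[[z]]$ for $i\geq k$. Applying $\delta$ to both sides, using $\delta[f(z^p)]=p(\delta f)(z^p)$, and dividing by $p$ yields the explicit formula
\[\delta\omega=(\delta\widetilde\omega)(z^p)+\sum_{k=2}^n(\delta^k\widetilde\omega)(z^p)\,\widehat{r}_k+\sum_{k=2}^n(\delta^{k-1}\widetilde\omega)(z^p)\,\frac{\delta\widehat{r}_k}{p}.\]

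The remaining task is to verify that the right-hand side lies in $1+z\mathbb{Z}_p[[z]]$. Proposition \ref{prop_lambda} gives $\delta\widetilde\omega\in 1+z\mathbb{Z}_p[[z]]$, whence $\delta^k\widetilde\omega\in z\mathbb{Z}_p[[z]]$ for $k\geq 2$. Combined with $\widehat{r}_k\in p^{k-1}\mathbb{Z}_p[[z]]$, which forces $\delta\widehat{r}_k/p\in p^{k-2}\mathbb{Z}_p[[z]]\subset\mathbb{Z}_p[[z]]$ for $k\geq 2$, every summand is in $\mathbb{Z}_p[[z]]$; evaluating at $z=0$, only the leading term $(\delta\widetilde\omega)(z^p)$ contributes and yields $1$, because $(\delta^k\widetilde\omega)(0)=0$ for $k\geq 2$ and because $h_{1,2}\in pz\mathbb{Z}_p[[z]]$ (combining $h_{1,2}\in p\mathbb{Z}_p[[z]]$ with $h_{1,2}(0)=0$) forces $(\delta\widehat{r}_2)(0)=0$. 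The hard part is securing the column-wise divisibility $h_{1,j}\in p^{j-1}\mathbb{Z}_p[[z]]$: applying Lemma \ref{prop_eig} directly to $\Phi$ instead would produce terms $\delta(\cdot)/p$ that are not integral, so routing through the auxiliary operator $L_1$, whose intertwiner builds in the required $p$-divisibility, is essential.
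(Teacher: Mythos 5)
Your proposal is correct and follows essentially the same route as the paper: produce $L_1$ and the intertwiner $H_1$ from Lemma~\ref{lemm_paso_base}, apply Lemma~\ref{prop_eig} with $\alpha_0=1$, $\alpha_1=0$, divide by $y_0\in 1+z\mathbb{Z}_p[[z]]$ (Theorem~\ref{theo_generic_integral}), apply $\delta$, and conclude via Proposition~\ref{prop_lambda}. The only difference is cosmetic bookkeeping (you divide by $p$ after differentiating and track the divisibility $h_{1,j}\in p^{j-1}\mathbb{Z}_p[[z]]$, while the paper divides by $py_0$ first and only needs $r_j\in p\mathbb{Z}_p[[z]]$), which does not change the argument.
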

 
 \begin{proof}
Let $Y_Lz^{N}$ be the fundamental matrix of solutions of $\delta X=AX$, where $A$ is the companion matrix of $L$. Since $L$ has a $p$\nobreakdash-integral Frobenius structure, by Proposition~\ref{prop_p_integral_frob}, we have $\bm{r}(L)\geq1$. As  $L\in\mathbb{Z}_p[[z]][\delta]$ is MUM and $\bm{r}(L)\geq1$ we can apply  Lemma~\ref{lemm_paso_base} and thus, from (a') of Lemma~\ref{lemm_paso_base}, we conclude that there is a MUM differential operator $L_1$ such that $\widetilde{y}_0=\Lambda_p(\mathfrak{f})$ and $\widetilde{y}_1=\Lambda_p(\mathfrak{f})\log z+p\Lambda_p(\mathfrak{g})$ are solutions of $L_1$. By (b') of Lemma~\ref{lemm_paso_base}, we know that $H_1=Y_L(z)(\Lambda_p(Y_L)(z^{p}))^{-1}\mathrm{diag}(1,p,p^2,\ldots, p^{n-1})$ belongs to $M_n(\mathbb{Z}_p[[z]])$ and $\delta H_1=AH_1-pH_1B_1(z^p)$, where $B_1$ is the companion matrix of $L_1$. Additionally, from Lemma~\ref{lemm_paso_0}, we have $Y_L(z)(\Lambda_p(Y_L)(z^{p}))^{-1}\in GL_n(\mathbb{Z}_p[[z]])$. So, if we put $H_1=(w_{i,j}(z))_{1\leq i,j\leq n}$ then $w_{1,j}(z)\in p^{j-1}\mathbb{Z}_p[[z]]$ for all $1\leq j\leq n$. Since $ H_1\in M_n(\mathbb{Z}_p[[z]])$ and $H_1(0)=\mathrm{diag}(1,p,\ldots, p^{n-1})$, from Lemma~\ref{prop_eig}, we get $$\widetilde{\omega}(z^p)y_0+(\delta\widetilde{\omega})(z^p)r_2+\cdots+(\delta^{n-1}\widetilde{\omega})(z^p)r_{n}=py_0\omega,$$
where $\widetilde{\omega}=\widetilde{y}_1/\widetilde{y}_0$, $r_j=\sum_{i=j}^n\binom{i-1}{i-j}w_{1,i}(z)(\delta^{i-j}\widetilde{y}_0)(z^p)$.
Since $w_{1,2},\ldots, w_{1,n}\in p\mathbb{Z}_p[[z]]$, we obtain $r_2,\ldots, r_{n}\in p\mathbb{Z}_p[[z]]$. So, $r_j=ps_j$ with $s_j\in\mathbb{Z}_p[[z]]$. For this reason, $$\widetilde{\omega}(z^p)\frac{1}{p}+(\delta\widetilde{\omega})(z^p)\frac{s_2}{y_0}+\cdots+(\delta^{n-1}\widetilde{\omega})(z^p)\frac{s_{n}}{y_0}=\omega.$$
We know that $\bm{r}(L)\geq1$ and thus, by Theorem~\ref{theo_generic_integral}, $y_0=\mathfrak{f}(z)\in 1+z\mathbb{Z}_p[[z]]$. So, $1/y_0\in1+z\mathbb{Z}_p[[z]]$ and, for every $j\in\{1,\ldots,n-1\}$, $t_j:=s_j/y_0$ belongs to $\mathbb{Z}_p[[z]]$. We put $\bm\omega=\delta\widetilde{\omega}$. Then, 
 \begin{equation*}
 \widetilde{\omega}(z^p)\frac{1}{p}+(\bm\omega)(z^p)t_2+\cdots+(\delta^{n-2}\bm\omega)(z^p)t_{n}=\omega.
 \end{equation*}
 By applying $\delta$ to this equality we obtain $$\delta(\widetilde{\omega}(z^p))\frac{1}{p}+\delta((\bm\omega)(z^p)t_2)+\cdots+\delta((\delta^{n-2}\bm\omega)(z^p)t_{n})=\delta\omega.$$
 As $\delta(\widetilde{\omega}(z^p))=p(\delta\widetilde\omega)(z^p)$ then $\delta(\widetilde{\omega}(z^p))\frac{1}{p}=\bm\omega(z^p)$. Therefore, $$\bm\omega(z^p)+\delta((\bm\omega)(z^p)t_1)+\cdots+\delta((\delta^{n-2}\bm\omega)(z^p)t_{n-1})=\delta\omega.$$
According to Proposition~\ref{prop_lambda}, $\delta\widetilde{\omega}=\bm\omega$ belongs to $1+z\mathbb{Z}_p[[z]]$. Therefore, $\delta\omega$ belongs to $1+z\mathbb{Z}_p[[z]]$.

%Finally, it is clear that the Taylor expansion of $\exp(y_1/y_0)$ at zero is equal to $\sum_{n\geq0}\frac{a_n}{n!}z^n$, where $a_n=(\frac{d}{dz^n}\exp(y_1/y_0))(0)$. Since $\delta\omega\in 1+z\mathbb{Z}_p[[z]]$, we have $a_n\in\mathbb{Z}_p$ for all $n\geq0$.  
 \end{proof}
In order to prove Theorem~\ref{prop_arith}, we recall the following classical lemma.

\begin{lemm}[Dieudonné-Dowrk's Lemma]
Let $p$ be a prime number and let $f(z)$ be in $1+z\mathbb{Q}_p[[z]]$. Then $f(z)\in 1+z\mathbb{Z}_p[[z]]$ if and only if $\frac{f(z)^p}{f(z^p)}\in 1+pz\mathbb{Z}_p[[z]]$.
\end{lemm}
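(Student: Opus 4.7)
The forward implication is the easy direction. Assuming $f \in 1 + z\mathbb{Z}_p[[z]]$, write $f = 1 + g$ with $g \in z\mathbb{Z}_p[[z]]$. The Frobenius congruence $g(z)^p \equiv g(z^p) \pmod{p\mathbb{Z}_p[[z]]}$ (valid for any $g \in \mathbb{Z}_p[[z]]$) gives $f(z)^p \equiv f(z^p) \pmod{p\mathbb{Z}_p[[z]]}$. Since both series have constant term $1$ and lie in $\mathbb{Z}_p[[z]]$, their difference lies in $pz\mathbb{Z}_p[[z]]$; dividing by the unit $f(z^p)$ of $\mathbb{Z}_p[[z]]$ yields $f(z)^p/f(z^p) \in 1 + pz\mathbb{Z}_p[[z]]$.

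For the converse, my plan is a strong induction on $n \geq 1$ showing that the coefficients $c_n$ of $f(z) = 1 + \sum_{k \geq 1} c_k z^k$ (a priori in $\mathbb{Q}_p$) all lie in $\mathbb{Z}_p$. Setting $h(z) = f(z)^p/f(z^p) \in 1 + pz\mathbb{Z}_p[[z]]$, I rewrite the hypothesis as the identity $f(z)^p = f(z^p)\,h(z)$ and compare the coefficient of $z^n$ on both sides, under the inductive hypothesis $c_1,\ldots,c_{n-1} \in \mathbb{Z}_p$.

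On the left, expanding $(1 + g)^p$ with $g = f - 1$ yields
\[ [z^n] f(z)^p = p c_n + \sum_{j=2}^{p-1} \binom{p}{j}\,[z^n] g^j + [z^n] g^p. \]
A degree count shows that $[z^n] g^j$ for $j \geq 2$ involves only $c_1,\ldots,c_{n-1}$, which are in $\mathbb{Z}_p$; the middle sum therefore lies in $p\mathbb{Z}_p$ since $p \mid \binom{p}{j}$ for $2 \leq j \leq p-1$. For the last term, I would truncate $g$ to $\tilde g = \sum_{k=1}^{n-1} c_k z^k \in \mathbb{Z}_p[[z]]$ (which does not change $[z^n] g^p$) and apply Frobenius to $\tilde g$, obtaining $[z^n] g^p \equiv [z^n] \tilde g(z^p) \pmod p$, which equals $c_{n/p}$ when $p \mid n$ and $n \geq p$, and $0$ otherwise. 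On the right, writing $h - 1 \in pz\mathbb{Z}_p[[z]]$ and expanding $f(z^p)\,h(z) = f(z^p) + f(z^p)(h-1)$ gives $[z^n] f(z^p)\,h(z) = c_{n/p}$ (in the same case $p \mid n$, $n \geq p$) plus an element of $p\mathbb{Z}_p$, again using only $c_k$ with $k < n$.

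The one point that needs care, and the main obstacle, is to verify that the $c_{n/p}$ contribution from $[z^n] g^p$ on the left matches, modulo $p$, the $c_{n/p}$ coming from $[z^n] f(z^p)$ on the right, so that these terms cancel. Without this cancellation one would only obtain $c_n \in p^{-1}\mathbb{Z}_p$; with it, the equation reduces to $pc_n \in p\mathbb{Z}_p$, i.e., $c_n \in \mathbb{Z}_p$, closing the induction. The required matching is immediate from the same Frobenius congruence already applied to the truncation $\tilde g$.
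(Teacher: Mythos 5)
Your proposal is correct. Note that the paper does not prove this lemma at all: it only cites \cite[Chap.~II, Theorem~5.2]{Dworkgfunciones}, so there is no in-text argument to compare with; your coefficient induction is essentially the classical proof of the Dieudonn\'e--Dwork lemma that such references give. The details check out: in the forward direction the congruence $f(z)^p\equiv f(z^p)\bmod p\mathbb{Z}_p[[z]]$ together with the equality of constant terms and the fact that $f(z^p)$ is a unit of $\mathbb{Z}_p[[z]]$ gives the claim. In the converse, the point you flag is genuinely the crux, and your resolution works: since every monomial of $[z^n]g^j$ with $j\geq 2$ uses only $c_1,\dots,c_{n-1}$, the truncation $\tilde g=\sum_{k=1}^{n-1}c_kz^k$ lies in $\mathbb{Z}_p[z]$ by the inductive hypothesis, and the Frobenius congruence gives $[z^n]g(z)^p=[z^n]\tilde g(z)^p\equiv[z^n]\tilde g(z^p)\bmod p\mathbb{Z}_p$, which is exactly $c_{n/p}$ when $p\mid n$ and $0$ otherwise (the index $n/p\leq n-1$ is indeed caught by the truncation), i.e.\ it matches $[z^n]f(z^p)$ on the right modulo $p\mathbb{Z}_p$; the term $[z^n]\bigl(f(z^p)(h-1)\bigr)$ involves only $c_m$ with $m\leq (n-1)/p<n$ and so lies in $p\mathbb{Z}_p$, and the identity then reduces to $pc_n\in p\mathbb{Z}_p$, closing the induction (the base case $n=1$ being immediate).
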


For a proof of this lemma we refer the reader to \cite[Chap. II, Theorem 5.2]{Dworkgfunciones}. As a corollary we have 

\begin{coro}\label{coro_exp}
Let $p$ be a prime number and let $f(z)$ be in $z\mathbb{Q}_p[[z]]$. Then $\exp(f(z))\in 1+z\mathbb{Z}_p[[z]]$ if and only if  $\frac{1}{p}f(z^p)-f(z)\in z\mathbb{Z}_p[[z]]$.
\end{coro}

%\begin{proof}
%Let us suppose that  $\exp(f(z))\in 1+z\mathbb{Z}_p[[z]]$. Then, according to Dieudonné\nobreakdash-Dowrk's Lemma, $\frac{\exp(f(z))^p}{\exp(f(z^p))}=\exp(pf(z)-f(z^p))= 1+pzh(z)$ with $h(z)\in\mathbb{Z}_p[[z]]$. So, $pf(z)-f(z^p)=\log_p(1+pzh(z))=\sum_{i>0}\frac{(pzh(z))^i}{i}\in p\mathbb{Z}_p[[z]]$ because, for all $i>0$, $p^i/i\in p\mathbb{Z}_p[[z]]$.

%Conversely suppose $f(z^p)-pf(z)=pzt(z)$ with $t(z)\in\mathbb{Z}_p[[z]]$. We want to show that $\exp(f(z))\in 1+z\mathbb{Z}_p[[z]]$. Thus, according to Dieudonné'-Dwork's~Lemma, that is equivalent to proving that $\frac{\exp(f(z))^p}{\exp(f(z^p))}=\exp(pf(z)-f(z^p))=\exp(pzt(z))\in 1+pz\mathbb{Z}_p[[z]]$. We have $$\exp(pzt(z))=1+\sum_{k\geq1}\frac{(pzt(z))^k}{k!}=1+\sum_{k\geq1}\frac{p^k}{k!}t(z)^kz^k.$$
%For all integers $k\geq1$, $|p|^{k/(p-1)}<|k!|$. Hence, $|p^k/k!|<(1/p)^{k-k/(p-1)}\leq1$. Whence, $\frac{p^k}{k!}t(z)^k\in\mathbb{Z}_p[[z]]$. Further, $\lim\limits_{k\rightarrow\infty}v_z\left(\frac{p^k}{k!}t(z)^kz^k\right)=\infty$, where $v_z(\cdot)$ is the order of vanishing at $z=0$. Consequently, $\exp(pzt(z))\in 1+zp\mathbb{Z}_p[[z]]$.
%\end{proof}

\begin{theo}\label{prop_arith}
Let $L$ be a MUM differential operator of order $n$ with coefficients in $\mathbb{Z}_p[[z]]$. Let $y_0=\mathfrak{f}(z)$ and  $y_1=\mathfrak{f}(z)\log z+\mathfrak{g}(z)$ be the solutions of $L$ with $\mathfrak{f}(z)\in 1+z\mathbb{Q}_p[z]]$ and $\mathfrak{g}(z)\in z\mathbb{Q}_p[[z]]$. Suppose that $L$ has a $p$\nobreakdash-integral Frobenius structure given by the matrix $\Phi=(\phi_{i,j})_{1\leq i,j\leq n}$. If $|\phi_{1,1}(0)|=1$ then $\exp(y_1/y_0)^p\in\mathbb{Z}_p[[z]].$
%\begin{enumerate}
%\item $p\mathfrak{g}(\mathfrak{f}(z^p))-\mathfrak{f}(\mathfrak{g}(z^p))\in\mathbb{Z}_p[[z]]$,
%\item 
%\item $\exp(y_1/y_0)\in\mathbb{Z}_p[[z]]$ if and only if the $p$-adic radius of convergence of $\exp(y_1/y_0)$ is at least $1$.
%\end{enumerate}
\end{theo}

\begin{proof}
We set  $\mathfrak{h}=p\frac{\mathfrak{g}(z)}{\mathfrak{f}(z)}$. Since $\exp(y_1/y_0)^p=z\exp(\mathfrak{h})$, according to Corollary~\ref{coro_exp}, it is sufficient to show that $\frac{1}{p}\mathfrak{h}(z^p)-\mathfrak{h}(z)\in\mathbb{Z}_p[[z]]$. By assumption, there is $\Phi=(\phi_{i,j}(z))_{1\leq i,j\leq n}\in M_n(\mathbb{Z}_p[[z]])$ such that $\delta\Phi=A\Phi-p\Phi A(z^p)$. We put $\omega=\frac{y_1}{y_0}$, where $y_1=\mathfrak{f}(z)\log(z)+\mathfrak{g}(z)$ and $y_0=\mathfrak{f}(z)$. So, it follows from Lemma~\ref{prop_eig} that $$\omega(z^p)\alpha_0y_0+(\delta\omega)(z^p)r_2+\cdots+(\delta^{n-1}\omega)(z^p)r_{n}=y_0(p\alpha_0\omega+\alpha_1),$$
where $r_j=\sum_{i=j}^n\binom{i-1}{i-j}\phi_{1,i}(z)(\delta^{i-j}y_0)(z^p)$, $\alpha_0=\phi_{1,1}(0)$, and $\alpha_1=\phi_{1,2}(0)$.

Consequently, $$\omega(z^p)\alpha_0+(\delta\omega)(z^p)\frac{r_2}{y_0}+\cdots+(\delta^{n-1}\omega)(z^p)\frac{r_{n}}{y_0}=p\alpha_0\omega+\alpha_1.$$
By Proposition~\ref{prop_p_integral_frob} and Theorem~\ref{theo_generic_integral}, we know that $y_0=\mathfrak{f}(z)\in 1+z\mathbb{Z}_p[[z]]$. So, $1/y_0\in1+z\mathbb{Z}_p[[z]]$. Then, for every $j\in\{2,\ldots,n\}$, $r_j/y_0$ belongs to $\mathbb{Z}_p[[z]]$. As $|\phi_{1,1}(0)|=1$ then, by Proposition~\ref{prop_delta}, we have $\delta\omega\in 1+z\mathbb{Z}_p[[z]]$. Thus, we obtain $\omega(z^p)\alpha_0-p\alpha_0\omega\in z\mathbb{Z}_p[[z]]$. Further,  $\alpha_0\in\mathbb{Z}_p^{*}$ because $\alpha_0=\phi_{1,1}(0)$ and $|\phi_{1,1}(0)|=1$. Hence, $\omega(z^p)-p\omega\in z\mathbb{Z}_p[[z]]$. But $$\omega(z^p)-p\omega=\left[\frac{p\mathfrak{f}(z^p)\log z+\mathfrak{g}(z^p)}{\mathfrak{f}(z^p)}\right]-\left[\frac{p\mathfrak{f}(z)\log z+p\mathfrak{g}(z)}{\mathfrak{f}(z)}\right]=\frac{\mathfrak{g}(z^p)}{\mathfrak{f}(z^p)}-p\frac{\mathfrak{g}(z)}{\mathfrak{f}(z)}=\frac{1}{p}\mathfrak{h}(z^p)-\mathfrak{h}(z).$$
 \end{proof}
\section{Proof of Theorem~\ref{theo_integral_mirror}}\label{sec_proof}
Given that, by assumption $L$ has a $p$-integral Frobenius structure, it follows from Proposition~\ref{prop_p_integral_frob} that $\bm{r}(L)\geq1$. Thus, by Theorem~\ref{theo_generic_integral}, we get $y_0(z)\in 1+z\mathbb{Z}_p[[z]]$. Let us now prove (1) and (2).

(1).  By definition $L^{(2)}=(\delta-t_2)(\delta-t_1)$, where 

$$t_1=\frac{\delta\mathfrak{f}(z)}{\mathfrak{f}(z)},\quad\text{ }t_2=\frac{\delta\mathfrak{h}(z)}{\mathfrak{h}(z)}\quad\text{ with }\mathfrak{h}(z)=\mathfrak{f}(z)+\delta(\mathfrak{g}(z))-t_1\mathfrak{g}(z).$$

As $|\phi_{1,1}(0)|=1$ then, by Proposition~\ref{prop_delta},  $\delta(y_1/y_0)\in 1+z\mathbb{Z}_p[[z]]$. In particular, $\delta(\mathfrak{g}(z)/\mathfrak{f}(z))$ belongs to $\mathbb{Z}_p[[z]]$. Since $$\delta\left(\frac{\mathfrak{g}(z)}{\mathfrak{f}(z)}\right)=\frac{(\delta\mathfrak{g}(z))\mathfrak{f}(z)-\mathfrak{g}(z)(\delta\mathfrak{f}(z))}{\mathfrak{f}(z)^2}$$
and $\mathfrak{f}(z)\in 1+z\mathbb{Z}_p[[z]]$, we obtain $(\delta\mathfrak{g}(z))\mathfrak{f}(z)-\mathfrak{g}(z)(\delta\mathfrak{f}(z))\in\mathbb{Z}_p[[z]]$. Notice that $$\mathfrak{h}(z)=\mathfrak{f}(z)+\delta(\mathfrak{g}(z))-t_1\mathfrak{g}(z)=\mathfrak{f}(z)+\delta(\mathfrak{g}(z))-\frac{\delta\mathfrak{f}(z)}{\mathfrak{f}(z)}\mathfrak{g}(z)=\frac{\mathfrak{f}(z)^2+(\delta\mathfrak{g}(z))\mathfrak{f}(z)-(\delta\mathfrak{f}(z))\mathfrak{g}(z)}{\mathfrak{f}(z)}.$$
So, $\mathfrak{h}(z)\in 1+z\mathbb{Z}_p[[z]]$. Consequently, $t_1$ and $t_2$ belong to $z\mathbb{Z}_p[[z]]$. Hence,  $L^{(2)}\in\mathbb{Z}_p[[z]][\delta]$ and it is clear that $L^{(2)}$ is MUM type.

We now proceed to show that $L^{(2)}$ has a $p$-adic Frobenius structure. We set 
\begin{equation}\label{eq_psi}
\Theta=J(z)\mathrm{diag}(1,p)J(z^p)^{-1},\text{ with } J(z)=\begin{pmatrix} 
\mathfrak{f}(z) & \mathfrak{g}(z)\\
\delta\mathfrak{f}(z) & \mathfrak{f}(z)+\delta\mathfrak{g}(z)\\
\end{pmatrix}
\end{equation}

First, we prove that $\delta\Theta=A_2\Theta-p\Theta A_2(z^p)$, where $A_2$ is the companion matrix of $L^{(2)}$. Notice that $$\Theta=J(z)\begin{pmatrix} 
1 & \log z\\
0 & 1\\
\end{pmatrix}\mathrm{diag}(1,p)\begin{pmatrix} 
1 & -p\log z\\
0 & 1\\
\end{pmatrix}J(z^p)^{-1}.$$
So, 
\begin{equation}\label{eq_mat_eq}
\Theta J(z^p)\begin{pmatrix} 
1 & p\log z\\
0 & 1\\
\end{pmatrix}=J(z)\begin{pmatrix} 
1 & \log z\\
0 & 1\\
\end{pmatrix}\mathrm{diag}(1,p).
\end{equation}

It is clear that the matrix $J(z)\begin{pmatrix} 
1 & \log z\\
0 & 1\\
\end{pmatrix}\mathrm{diag}(1,p)$ is a solution of the system $\delta X=A_2X$ and that $ J(z^p)\begin{pmatrix} 
1 & p\log z\\
0 & 1\\
\end{pmatrix}$ is a solution of the system $\delta X=pA_2(z^p)X$. Consequently, we deduce from~\eqref{eq_mat_eq} that 
\begin{equation}\label{eq_frob_str}
\delta\Theta=A_2\Theta-p\Theta A_2(z^p).
\end{equation}
By \eqref{eq_psi}, we have 
\begin{equation*}
\Theta=\begin{pmatrix} 
\theta_{1,1}(z) & \theta_{1,2}(z)\\
\theta_{2,1}(z) & \theta_{2,2}(z)\\
\end{pmatrix},
\end{equation*}
where 
$$\theta_{1,1}(z)=\frac{\mathfrak{f}(z)((\mathfrak{f}+\delta\mathfrak{g})(z^p))-p\mathfrak{g}(z)((\delta\mathfrak{f})(z^p))}{\Delta},\text{ } \theta_{1,2}(z)=\frac{p\mathfrak{g}(z)(\mathfrak{f}(z^p))-\mathfrak{f}(z)(\mathfrak{g}(z^p))}{\Delta},$$

$$\theta_{2,1}(z)=\frac{(\delta\mathfrak{f}(z))((\mathfrak{f}+\delta\mathfrak{g})(z^p))-p(\mathfrak{f}(z)+\delta\mathfrak{g}(z))(\delta\mathfrak{f}(z^p))}{\Delta},$$ 

$$\theta_{2,2}(z)=\frac{p(\mathfrak{f}(z)+\delta\mathfrak{g}(z))(\mathfrak{f}(z^p))-(\delta\mathfrak{f}(z))(\mathfrak{g}(z^p))}{\Delta}.$$

%\begin{equation*}
%\Psi=\begin{pmatrix} 
%\frac{\mathfrak{f}((\mathfrak{f}+\delta\mathfrak{g})(z^p))-\mathfrak{g}((\delta\mathfrak{f})(z^p))}{\Delta} & \frac{p\mathfrak{g}(\mathfrak{f}(z^p))-\mathfrak{f}(\mathfrak{g}(z^p))}{\Delta} \\[0.7em] 
%\frac{\delta\mathfrak{f}((\mathfrak{f}+\delta\mathfrak{g})(z^p))-(\mathfrak{f}+\delta\mathfrak{g})(\delta\mathfrak{f}(z^p))}{\Delta} & \frac{p(\mathfrak{f}+\delta\mathfrak{g})(\mathfrak{f}(z^p))-\delta\mathfrak{f}(\mathfrak{g}(z^p))}{\Delta}\\
%\end{pmatrix},
%\end{equation*}
with $\Delta=(\mathfrak{f}(\mathfrak{f}+\delta\mathfrak{g})-\mathfrak{g}(\delta\mathfrak{f}))(z^p)$. We next prove that $\theta_{1,1}(z),\theta_{1,2}(z)\in\mathbb{Z}_p[[z]].$

Note that $\Delta=(\mathfrak{h}\mathfrak{f})(z^p)$. As $\mathfrak{h}(z),\mathfrak{f}(z)\in 1+z\mathbb{Z}_p[[z]]$ then $\Delta\in 1+z\mathbb{Z}_p[[z]]$. From Theorem~\ref{prop_arith}, we have $\exp(y_1/y_0)^p\in\mathbb{Z}_p[[z]]$ and, by Corollary~\ref{coro_exp}, that is equivalent to saying that $\frac{\mathfrak{g}(z^p)}{\mathfrak{f}(z^p)}-p\frac{\mathfrak{g}(z)}{\mathfrak{f}(z)}\in z\mathbb{Z}_p[[z]]$. Since $\mathfrak{f}(z)\in 1+z\mathbb{Z}_p[[z]]$, we obtain $p\mathfrak{g}(z)\mathfrak{f}(z^p)-\mathfrak{f}(z)\mathfrak{g}(z^p)\in\mathbb{Z}_p[[z]]$. Thus, $\theta_{1,2}\in z\mathbb{Z}_p[[z]]$. Furthermore, it is clear from Equation~\eqref{eq_frob_str} that $$\theta_{1,1}(z)(\mathfrak{f}(z^p))+\theta_{1,2}(z)((\delta\mathfrak{f})(z^p))=\mathfrak{f}(z).$$
In particular, $$\theta_{1,1}(z)=\frac{\mathfrak{f}(z)-\theta_{1,2}(z)((\delta\mathfrak{f})(z^p))}{\mathfrak{f}(z^p)}.$$
We have already seen that $\theta_{1,2}(z)\in\mathbb{Z}_p[[z]]$ and we know that $\mathfrak{f}(z)\in 1+z\mathbb{Z}_p[[z]]$. Therefore, $\theta_{1,1}(z)\in 1+z\mathbb{Z}_p[[z]]$.

Let us write $L^{(2)}=\delta^2+a_1(z)\delta+a_2(z)$ with $a_1(z), a_2(z)\in z\mathbb{Z}_p[[z]]$. We put \begin{equation*}
\Psi=\begin{pmatrix} 
\psi_{1,1}(z) & \psi_{1,2}(z)\\
\psi_{2,1}(z) & \psi_{2,2}(z)\\
\end{pmatrix}=\begin{pmatrix} 
\theta_{1,1}(z) & \theta_{1,2}(z)\\
\delta\theta_{1,1}(z)-p\theta_{1,2}(z)a_2(z^p) & \delta\theta_{1,2}(z)+p\theta_{1,1}(z)-p\theta_{1,2}(z)a_{1}(z^p)\\
\end{pmatrix}.
\end{equation*}
Consequently, $\Psi\in M_2(\mathbb{Z}_p[[z]])$. We now show that
\begin{equation}\label{eq_frob_psi}
\delta\Psi=A_2\Psi-p\Psi A_2(z^p).
\end{equation}
 Let $y$ be a solution of $L^{(2)}$ in $\mathbb{Q}_p[[z]]+\mathbb{Q}_p[[z]]\log z$. Then, from Equation~\eqref{eq_frob_str}, it follows that $$r=\theta_{1,1}(z)(y(z^p))+\theta_{1,2}(z)((\delta y)(z^p))$$
is solution of $L^{(2)}$. Since, by definition $\theta_{1,1}(z)=\psi_{1,1}(z)$ and $\theta_{1,2}(z)=\psi_{1,2}(z)$, we get 
\begin{equation}\label{eq_psi_1}
r=\psi_{1,1}(z)(y(z^p))+\psi_{1,2}(z)((\delta y)(z^p))
\end{equation}
is solution of $L^{(2)}$. So, by applying $\delta$ to the previous equality and by using the fact $\delta ^2y=-a_1(z)\delta y-a_2(z)y$, we get $$\delta r=\psi_{2,1}(z)(y(z^p))+\psi_{2,2}(z)((\delta y)(z^p)).$$
For this reason, \[\Psi\begin{pmatrix}
y(z^p)\\
(\delta y)(z^p)\end{pmatrix}=\begin{pmatrix}
r\\
\delta r\end{pmatrix}.
\]
Thus, by applying $\delta$ to the last equality, we get  \[\delta\Psi\begin{pmatrix}
y(z^p)\\
(\delta y)(z^p)\end{pmatrix}+\Psi pA_2(z^p)\begin{pmatrix}
y(z^p)\\
(\delta y)(z^p)\end{pmatrix}=A_2\begin{pmatrix}
r\\
\delta r\end{pmatrix}=A_2\Psi\begin{pmatrix}
y(z^p)\\
(\delta y)(z^p)\end{pmatrix}.
\]
Since $y$ is an arbitrary solution of $L^{(2)}$, the last equality implies $\delta\Psi=A_2\Psi-p\Psi A_2(z^p)$.

(2). $(a)\Rightarrow(b)$.  Let us suppose that $\exp(y_1/y_0)\in z\mathbb{Z}_p[[z]]$. According to Corollary~\ref{coro_exp}, that is equivalent to saying that $\frac{1}{p}\frac{\mathfrak{g}(z^p)}{\mathfrak{f}(z^p)}-\frac{\mathfrak{g}(z)}{\mathfrak{f}(z)}\in z\mathbb{Z}_p[[z]]$. By applying Proposition~\ref{prop_eig} to $L^{(2)}$ and $\Psi$, we deduce that 
\begin{equation}\label{eq_final}
\frac{1}{p}\frac{\mathfrak{g}(z^p)}{\mathfrak{f}(z^p)}-\frac{\mathfrak{g}(z)}{\mathfrak{f}(z)}=-(\delta(y_1/y_0))(z^p)\frac{\psi_{1,2}}{p}\frac{\mathfrak{f}(z^p)}{\mathfrak{f}(z)}.
\end{equation}
From Proposition~\ref{prop_delta}, we have $\delta(y_1/y_0)\in 1+z\mathbb{Z}_p[[z]]$ and we also know that $\mathfrak{f}(z)\in 1+z\mathbb{Z}_p[[z]]$. Thus, from Equation~\eqref{eq_final}, we get $\psi_{1,2}(z)\in p\mathbb{Z}_p[[z]]$. In addition, from Equation~\eqref{eq_psi_1} follows that $r=\psi_{1,1}(z)y_0(z^p)+\psi_{1,2}(z)((\delta y_0)(z^p))$ is solution of $L^{(2)}$. Note that $r\in 1+z\mathbb{Q}_p[[z]]$. Thus, according to (2) of Remark~\ref{rem_sol}, we get $$r=\psi_{1,1}(z)y_0(z^p)+\psi_{1,2}(z)((\delta y_0)(z^p))=y_0(z).$$ Whence,  $\psi_{1,1}(z)y_0(z^p)=y_0(z)\bmod p.$

$(b)\Rightarrow(c)$. Suppose that $\psi_{1,1}(z)y_0(z^p)=y_0(z)\bmod p.$ Then, by applying $\Lambda_p$ to the previous equality, we get $\Lambda_p(\psi_{1,1}(z))y_0(z)=\Lambda_p(y_0(z))\bmod p$. 

$(c)\Rightarrow(a)$ Let us assume that $\Lambda_p(\psi_{1,1}(z))=\frac{\Lambda_p(y_0(z))}{y_0(z)}\bmod p$. By construction, $$\psi_{1,1}(z)=\theta_{1,1}(z)=\frac{y_0(z)-\theta_{1,2}(z)(\delta y_0(z))(z^p)}{y_0(z^p)}.$$ Hence $$\Lambda_p(\psi_{1,1}(z))=\frac{\Lambda_p(y_0(z))-(\Lambda_p(\theta_{1,2}(z)))(\delta y_0(z))}{y_0(z)}.$$
Therefore, $$\frac{\Lambda_p(y_0(z))-(\Lambda_p(\theta_{1,2}(z)))(\delta y_0(z))}{y_0(z)}=\frac{\Lambda_p(y_0(z))}{y_0(z)}\bmod p.$$ Whence, we obtain $\Lambda_{p}(\theta_{1,2}(z))\in p\mathbb{Z}_p[[z]]$.  By definition, $\theta_{1,2}(z)=\psi_{1,2}(z)$. Further, it follows from Equation~\eqref{eq_frob_psi} that $\delta\Psi=A_2\Psi\bmod p$. Since $A_2$ is the companion matrix of $L^{(2)}$, we get that $\psi_{1,2}(z)\bmod p$ is solution of $L^{(2)}\bmod p$. Given that $L\in \mathbb{Z}_p[[z]][\delta]L^{(2)}$, we then have $\psi_{1,2}(z)\bmod p$ is solution of $L\bmod p$. Thus, from Lemma 6.6 and Lemma 6.7 of \cite{vargas2}, we deduce that \[\psi_{1,2}(z)=y_0(z)\frac{(\Lambda_p(\psi_{1,2}))(z^p)}{(\Lambda_p(y_0))(z^p)}\bmod p.\] But, we have $\Lambda_p(\psi_{1,2}(z))=0\bmod p$ because  $\theta_{1,2}(z)=\psi_{1,2}(z)$ and $\Lambda_{p}(\theta_{1,2}(z))\in p\mathbb{Z}_p[[z]]$. Thus, $\psi_{1,2}=0\bmod p$. That is, $\theta_{1,2}\in p\mathbb{Z}_p[[z]]$. However, we know that $$\theta_{1,2}(z)=\frac{p\mathfrak{g}(z)(\mathfrak{f}(z^p))-\mathfrak{f}(z)(\mathfrak{g}(z^p))}{\Delta},$$
where $\Delta\in 1+z\mathbb{Z}_p[[z]]$. So $p\mathfrak{g}(z)(\mathfrak{f}(z^p))-\mathfrak{f}(z)(\mathfrak{g}(z^p))\in p\mathbb{Z}_p[[z]]$. Thus, since $\mathfrak{f}(z)\in 1+z\mathbb{Z}_p[[z]]$, we get $\frac{1}{p}\frac{\mathfrak{g}(z^p)}{\mathfrak{f}(z^p)}-\frac{\mathfrak{g}(z)}{\mathfrak{f}(z)}\in z\mathbb{Z}_p[[z]]$. Consequently, Corollary~\ref{coro_exp} implies $\exp(y_1/y_0)\in\mathbb{Z}_p[[z]]$.

%it follows from the equality $\psi_{1,1}(z)y_0(z^p)+\psi_{1,2}(z)((\delta y_0)(z^p))=y_0(z)$ Thus, $\psi_{1,2}\in p\mathbb{Z}_p[[z]]$ because, from Equation~\eqref{eq_frob_psi}, we have $\psi_{1,1}(\mathfrak{f}(z^p))+\psi_{1,2}((\delta\mathfrak{f})(z^p))=\mathfrak{f}$. Finally, by Proposition~\ref{prop_delta}, we get $\delta(y_1/y_0)\in 1+z\mathbb{Z}_p[[z]]$ and we also know that $\mathfrak{f}\in 1+z\mathbb{Z}_p[[z]]$. Therefore,  from Equation~\eqref{eq_final}, we obtain $\frac{1}{p}\frac{\mathfrak{g}(z^p)}{\mathfrak{f}(z^p)}-\frac{\mathfrak{g}}{\mathfrak{f}}\in z\mathbb{Z}_p[[z]]$ and thus, according to Corollary~\ref{coro_exp}, $\exp(y_1/y_0)\in z\mathbb{Z}_p[[z]]$.
$\hfill\square$

%We finish by doing following remark.

\begin{rema}\label{rem_equivalence}
We assume that $L$ satisfies the assumptions of Theorem~\ref{theo_integral_mirror}. In this remark we show that $\exp(y_1/y_0)\in z\mathbb{Z}_p[[z]]$ if and only if there exists $\mathcal{B}=B_{1}(z)+B_{2}(z)\delta\in\mathbb{Z}_p[[z]]$ with $B_1(0)=1$ and $B_2(0)=0$ such that, for every solution $y$ of $L^{(2)}$, $\mathcal{B}(y(z^p))$ is solution of $L^{(2)}$. Indeed, according to (1) of Theorem~\ref{theo_integral_mirror},  $L^{(2)}$ has a $p$-integral Frobenius structure $\Psi=(\psi_{i,j}(z))_{1\leq i,j\leq2}$ such that $\Psi(0)=\mathrm{diag}(1,p)$. Let us suppose that $\exp(y_1/y_0)\in z\mathbb{Z}_p[[z]]$. Thus, by following the proof of $(a)\Rightarrow(b)$, we know that $\psi_{1,2}(z)=pt_{1,2}(z)$ with $t_{1,2}(z)\in\mathbb{Z}_p[[z]]$. Moreover, $\psi_{1,2}(z)((\delta y)(z^p))=t_{1,2}(z)\delta(y(z^p)).$ Therefore, if we set $\mathcal{B}=\psi_{1,1}(z)+t_{1,2}(z)\delta$, from Equation~\eqref{eq_psi_1}, we get that, for every  $y$ of $L^{(2)}$, $$\mathcal{B}(y(z^p))=\psi_{1,1}(z)(y(z^p))+t_{1,2}(z)\delta(y(z^p))=\psi_{1,1}(z)(y(z^p))+\psi_{1,2}(z)((\delta y)(z^p))$$
is solution of $L^{(2)}$. Further, by construction $\psi_{1,1}(0)=1$ and $t_{1,2}(0)=0$ because $\psi_{1,2}(0)=0$.

We now assume that there exists $\mathcal{B}=B_{1}(z)+B_{2}(z)\delta\in\mathbb{Z}_p[[z]]$ with $B_1(0)=1$ and $B_2(0)=0$ such that, for every solution $y$ of $L^{(2)}$, $\mathcal{B}(y(z^p))$ is solution of $L^{(2)}$. Therefore, for every solution $y$ of $L^{(2)}$, $$B_1(z)y(z^p)+pB_2(z)((\delta y)(z^p))$$
is solution of $L^{(2)}$. Let us write $L^{(2)}=\delta^2+a_1(z)\delta+a_2(z)$. According to (1) of Theorem~\ref{theo_integral_mirror}, $a_1(z), a_2(z)\in z\mathbb{Z}_p[[z]]$. We now put \[\Gamma=\begin{pmatrix} 
B_{1}(z) & pB_{2}(z)\\
\delta B_{1}(z)-p^2B_{2}(z)a_2(z^p) & \delta B_{2}(z)+pB_{1}(z)-p^2B_{2}(z)a_{1}(z^p)\\
\end{pmatrix}.\]
Then, $\Gamma\in M_4(\mathbb{Z}_p[[z]])$ and it is not hard to see that $\Gamma=A_2\Gamma-p\Gamma A_2(z^p)$. Hence, following Remark~\ref{rem_det}, we have $\Gamma=Y_{L^{(2)}}\Gamma(0)(Y_{L^{(2)}}(z^p))^{-1}$. It is clear that $\Gamma(0)=\mathrm{diag}(1,p)$. We already know that $\Psi$ is a $p$-integral Frobenius structure for $L^{(2)}$ and hence, by Remark~\ref{rem_det} again, $\Psi=Y_{L^{(2)}}\Psi(0)(Y_{L^{(2)}}(z^p))^{-1}$ and we know that $\Psi(0)=\mathrm{diag}(1,p)$. So, $\Gamma=\Psi$ and thus $\psi_{1,2}\in p\mathbb{Z}_p[[z]]$. Further,  from Equation~\eqref{eq_psi_1} follows that $r=\psi_{1,1}(z)y_0(z^p)+\psi_{1,2}(z)((\delta y_0)(z^p))$ is solution of $L^{(2)}$. Note that $r\in 1+z\mathbb{Q}_p[[z]]$. Thus, according to (2) of Remark~\ref{rem_sol}, we get $$r=\psi_{1,1}(z)y_0(z^p)+\psi_{1,2}(z)((\delta y_0)(z^p))=y_0(z).$$Consequently, $\psi_{1,1}(z)y_0(z^p)=y_0(z)\bmod p$. Consequently, according to Theorem~\ref{theo_integral_mirror}, $\exp(y_1/y_0)\in z\mathbb{Z}_p[[z]]$.
\end{rema}

\end{document}